\documentclass[11pt]{article}
\usepackage{amsmath,amsfonts,amssymb,amsthm}
\usepackage{eucal}
\usepackage{color}
\usepackage{makeidx}
\usepackage{multicol}
\usepackage{verbatim}

\topmargin -5mm
\textheight 215mm
\textwidth 140mm
\oddsidemargin 10mm
\evensidemargin 10mm

\def\Z{{\mathbb Z}}

\def\R{{\mathbb R}}
\def\P{{\mathbb P}}
\def\E{{\mathbb E}}
\def\I{{\mathbb I}}
\newtheorem{theorem}{Theorem}
\newtheorem{lemma}[theorem]{Lemma} 
\newtheorem{corollary}[theorem]{Corollary}
\newtheorem{proposition}[theorem]{Proposition} 
\newtheorem{example}[theorem]{Example}

\makeindex

\title{Renewal Theory for Transient Markov Chains\\
with Asymptotically Zero Drift}

\author{Denis Denisov\footnote{University of Manchester, UK, 
denis.denisov@manchester.ac.uk},
Dmitry Korshunov\footnote{Lancaster University, UK, 
d.korshunov@lancaster.ac.uk},
and Vitali Wachtel\footnote{University of Augsburg, Germany, 
vitali.wachtel@math.uni-augsburg.de}}

\begin{document}
\maketitle

\begin{abstract}
We solve the problem of asymptotic behaviour of the renewal measure
(Green function) generated by a transient Lamperti's Markov chain $X_n$ in $\R$,
that is, when the drift of the chain tends to zero at infinity.
Under this setting, the average time spent by $X_n$ in the interval $(x,x+1]$
is roughly speaking the reciprocal of the drift
and tends to infinity as $x$ grows.

For the first time we present a general approach relying in a diffusion approximation to prove renewal theorems for Markov chains. 
We apply a martingale type technique and show that the asymptotic behaviour
of the renewal measure heavily depends on the rate at which the drift vanishes.
The two main cases are distinguished, either the drift of the chain decreases as
$1/x$ or much slower than that, say as $1/x^\alpha$ for some $\alpha\in(0,1)$.

The intuition behind how the renewal measure behaves
in these two cases is totally different.
While in the first case $X_n^2/n$ converges weakly to a $\Gamma$-distribution
and there is no law of large numbers available,
in the second case a strong law of large numbers holds true
for $X_n^{1+\alpha}/n$ and further normal approximation is available.

\vspace{5mm}

{\it AMS 2010 subject classifications}: Primary 60K05; secondary 60J05; 60G42

{\it Keywords and phrases}: 
Transient Markov chain, renewal kernel, renewal measure, Lamperti's problem,
Green function.
\end{abstract}

\section{Introduction}

Let $X=\{X_n, n\ge0\}$ be a time homogeneous 
Markov chain\index{Markov chain} whose
state space is some Borel subset $S$ of $\R$, 
that is, for all $x\in S$ and Borel sets $B\subseteq S$,
\begin{eqnarray*}
\P\{X_{n+1}\in B\mid X_0,\ldots,X_{n-1},X_n=x\}
&=& \P\{X_{n+1}\in B\mid X_n=x\}\\
&=:& P(x,B). 
\end{eqnarray*}
Standard examples of $S$ are $\R$, $\Z$, $\R^+$, and $\Z^+$.
In the sequel we just say that $X_n$
takes values in $\R$ keeping in mind that the corresponding transition 
probabilities may be defined on some subset $S$ of the real line only.

Denote by $\xi(x)$, $x\in\R$, a random variable corresponding
to the jump of the chain at point $x$, that is, 
a random variable with distribution
\begin{eqnarray*}
\P\{\xi(x)\in B\}
&=& \P\{X_{n+1}-X_n\in B\mid X_n=x\}\\
&=& \P_x\{X_{1}\in x+B\},
\quad B\in{\mathcal B}(\R);
\end{eqnarray*}
hereinafter the subscript $x$ denotes the initial position
of the Markov chain $X_n$, that is, $X_0=x$.
Denote the $k$th moment of the jump at point $x$ by 
\begin{eqnarray*}
m_k(x) &:=& \E\xi^k(x).
\end{eqnarray*}

Define the renewal (or potential)
kernel $Q$ by the equality
\begin{eqnarray*}
Q(\cdot,\cdot) &:=& \sum_{n=0}^\infty P^n(\cdot,\cdot).
\end{eqnarray*}
A Markov chain $X_n$ is called {\it transient} 
(see Meyn and Tweedie [\ref{Tw}, Ch. 8])
if there exists a countable cover of $\R$ by uniformly transient sets $\{B_k\}$.
In its turn a set $B\in{\mathcal B}(\R)$ is called {\it uniformly transient} if
\begin{eqnarray*}
\sup_{y\in B}Q(y,B) &<& \infty.
\end{eqnarray*}
By the Markov property, this is equivalent to the condition
\begin{eqnarray*}
\sup_{y\in\R}Q(y,B) &<& \infty,
\end{eqnarray*}
because, considering the first hitting time of $B$,
we observe by the Markov property that $Q(x,B)\le \sup_{y\in B}Q(y,B)$
for all states $x\in\R$.
If $X_n$ is transient with respect to the collection of intervals $B_k=(k,k+1]$, 
$k\in\Z$, then $Q(x,B)<\infty$ for all $x$ and bounded sets $B$ and,
hence, the renewal measure (Green function) generated by the chain $X_n$
\begin{eqnarray*}
H(B) &:=& \sum_{n=0}^\infty \P\{X_n\in B\},\quad B\in\mathcal B(\R),
\end{eqnarray*}
is finite for every initial distribution of the chain and bounded set $B$.

The main aim of the present paper is to study integral (elementary) 
and local renewal theorems for the renewal measure  $H$, 
that is we find asymptotics for $H(x_*,x]$, 
$H(x,x+t(x)]$, $H(x,x+h]$ as $x\to \infty$, where $t(x)$ is 
a growing function and $x_*$ and $h$ are some fixed constants. 

The simplest case of a transient Markov chain is just a random walk 
$X_n=\xi_1+\cdots+\xi_n$ generated by independent identically 
distributed random variables $\xi_n$'s with positive drift, 
which may be equivalently defined as a spatially and temporarily 
homogeneous Markov chain. 
The renewal theory for a random walk has been intensively studied since 1940s. 
The integral (elementary) renewal theorem for a random walk with positive
jumps and finite mean goes back to Feller \cite{Feller1941}
and states that $H(0,x]\sim x/\E\xi_1$ as $x\to\infty$. 
A more detailed information is available via the local renewal theorem, 
which was proved for lattice random variables in~\cite{EFP1949} and 
for non-lattice random variables in~\cite{B48}.
In the finite mean non-lattice case the local renewal theorem 
gives the following sharp asymptotics  
$H(x,x+h]\to h/\E\xi_1$ as $x\to\infty$, for any fixed $h>0$. 
Later Blackwell extended in~\cite{B53} the local renewal theorem 
to the case of i.i.d. random variables with positive mean 
that can take values of both signs using the important concept
of what was called by Feller ladder heights and ladder epochs. 
Original Blackwell's proof was considered to be quite complicated 
and a number of attempts were made to give an easier proof. 
A rather simple proof was given by Feller and Orey~\cite{FO61}, 
see also~\cite{Feller}. 
Further studies also considered behaviour of the remainder 
in the local renewal theorem, see~\cite{R1977}  and references therein. 
In the infinite mean case the asymptotics in Blackwell's theorem was not sharp. 
In 1960-70s a local renewal theorem was proved for regularly varying 
increments of index~$\alpha>1/2$, see~\cite{Garcia1962} and~\cite{E1970}. 
Subsequently there have been various improvements on these results, 
but the complete answer has been obtained very recently, 
see~\cite{CaravennaDoney2016}.

There exists a number of generalisations of the renewal theorem
for various stochastic processes. 
A natural extension is one for non-homogeneous (in time) random walk, 
that is a random walk with independent, 
but not necessarily identically distributed increments. 
Probably the first result in this direction was~\cite{CoxSmith1953}, 
where the local renewal theorem was derived from 
the local central limit theorem for  a non-homogeneous random walk. 
Further extensions may be found in~\cite{Smith1961, Williamson1965, Maejima1975}.
Renewal theorems for multidimensional random walks may be found 
in~\cite{Doney1966},\cite{Nagaev1980},~\cite{GH2013} and  recent paper~\cite{Berger2019}, see also references therein. 

The Markov setting has mostly been considered in the literature 
for case of Markov modulated random walks, 
see, e.g.~\cite{Kesten1974,AMN1978,KlP2003} and~\cite{Shurenkov1985}.
In this setting one can usually use the Harris regeneration 
and split the process into independent cycles. 
Then, the traditional setting of Blackwell's theorem can be used. 

For the results cited above, it is essential that
the underlying process possesses some independence structure. 
In the present paper we consider transient Markov chains 
where the cycle structure is not available, 
which makes a reduction to Blackwell's theorem impossible. 
Clearly, in order to observe some regular asymptotics 
for the renewal process, we need to assume some regular behaviour
of the Markov chain at infinity. In particular, if the drift of $X_n$,
$m_1(x)$, has a positive limit at infinity, say $a$, then
the local renewal result, $H(x,x+h]\to\ h/a$, 
is only known for an {\it asymptotically homogeneous in space} Markov chain
which is defined as a Markov chain such that, for some random variable $\xi$,
\begin{equation}\label{asymp.hom}
\xi(x) \Rightarrow \xi\quad\mbox{as }x\to\infty,
\end{equation}
see~\cite{Korshunov2008}; 
if there is no asymptotic homogeneity in space
then the asymptotic behaviour of $H(x,x+h]$ may be very different. 

So, while the asymptotic behaviour of a Markov chain with asymptotically
non-zero drift is well understood, at least if it is asymptotically
homogeneous, the case of a  drift vanishing at infinity
is studied much less. In general, we say that a Markov chain has 
{\it asymptotically (in space) zero drift} if $m_1(x)=\E\xi(x)\to 0$ as $x\to\infty$.
The study of processes with asymptotically zero drift was initiated by Lamperti
in a series of papers~\cite{Lamp60,Lamp62,Lamp63}. 
The vanishing drift seems to be more difficult for the analysis due to 
the fact that the Markov chain tends to infinity much slower 
and one should take into account diffusion  fluctuations.  

An integral (elementary) renewal theorem for a transient Markov chain with 
drift $m_1(x)$ asymptotically proportional to $1/x$ at infinity
was proved in~\cite{DKW2013}; it was shown there that then 
the renewal function behaves as $cx^2$ for large values of $x$. 

Here we present for the first time a unified  approach that allows 
us to prove renewal theorems for general Markov chains.
This is the  {\it main novelty} of the present paper. 
In this paper we analyse one-dimensional Markov chains, but clearly the approach 
suggested below can be used in the multidimensional setting as well. 
Our approach relies on the diffusion approximation, for that reason 
we consider Markov chains which may be approximated by diffusion process. 
Then, if we have some result of renewal type for a diffusion processes
we should be able to obtain a similar result for a Markov chain
having similar asymptotic behaviour of the first two moments of jumps.  
In particular, we will see in the examples below that as soon as we have 
a Green function for the diffusion process we should, in principle, 
be able to construct an approximation for the Green function of the 
Markov chain  and thus to derive a renewal theorem.

\subsection{Main results on renewal measure}

Throughout we assume some weak irreducibility of $X_n$,
namely that there are no bounded trajectories of $X_n$,
that is, 
\begin{equation}\label{eq:irreducibility}
\limsup_{n\to\infty} X_n = \infty\ \mbox{ a.s. }
\end{equation}
For any $s>0$ we denote the $s$-truncation 
of the $k$th moment of jump at state $x$ by
\begin{eqnarray*}
m_k^{[s]}(x) &:=& \E\{\xi^k(x);\ |\xi(x)|\le s\}.
\end{eqnarray*}
For any random variables $\xi$ and $\eta$ we write 
$\xi \le_{\rm st}\eta $ if $\P\{\xi>t\}\le \P\{\eta>t\}$ for all $t\in\R$.

\begin{theorem}\label{thm:regular}
Let $X_n$ be such that~\eqref{eq:irreducibility} holds and 
\begin{eqnarray}\label{m1.m2.1x}
m_1^{[s(x)]}(x)\sim\frac{\mu}{x},&&\quad m_2^{[s(x)]}(x)\to b \in(0,\infty)
\quad\mbox{as }x\to\infty, 
\end{eqnarray}
for some $\mu>b/2$ and an increasing level $s(x)$
of order $o(x)$. Assume also that, 
\begin{eqnarray}\label{regular_left_tail}
\P\{|\xi(y)|\ge s(y)\} &\le& p(y)/y,  
\end{eqnarray}
for some decreasing integrable at infinity function $p(x)$, and
\begin{eqnarray}\label{majorant_third}
|\xi(y)| \I\{|\xi(y)|\le s(y)\} &\le_{\rm st}& 
\widehat\xi\quad\mbox{for all }y\ge 0,
\end{eqnarray}
where 
\begin{equation}\label{majorant_third_moment_exists}
\E\widehat\xi^2<\infty. 
\end{equation}	
Then, for every function $h(x)\uparrow\infty$ of order $o(x),$ 
we have
$$
H(x,x+h(x)]\sim\frac{2}{2\mu-b}xh(x)\quad\mbox{as }x\to\infty.
$$
\end{theorem}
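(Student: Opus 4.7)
My plan is to realize $H(x_0,(a,a+h(a)])$ as the value at $x_0$ of a suitable test function $V_a$, via the Dynkin-type identity
\[
V_a(x_0)\;=\;H\bigl(x_0,(a,a+h(a)]\bigr)\;+\;\sum_{k=0}^{\infty}\E_{x_0}\,\varepsilon_a(X_k),
\]
valid whenever $V_a\ge 0$ is bounded, $V_a(X_n)\to 0$ a.s.\ (consequence of transience plus $V_a(y)\to 0$ as $y\to\infty$), and
$(I-P)V_a(y)=\I_{(a,a+h(a)]}(y)+\varepsilon_a(y)$.
The theorem follows once $V_a$ is built so that $V_a(x_0)\sim 2ah(a)/(2\mu-b)$ and $\sum_k\E_{x_0}|\varepsilon_a(X_k)|=o(ah(a))$ as $a\to\infty$.

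To build $V_a$ I use the two fundamental harmonic functions of the limiting diffusion generator $L=\tfrac{b}{2}\partial^2+\tfrac{\mu}{y}\partial$, namely the constant $1$ and the scale $y^{1-2\mu/b}$, which decays to $0$ at $+\infty$ exactly because $\mu>b/2$. I take
\[
V_a(y) \;=\;
\begin{cases}
C_a, & y\le a,\\
C_a\bigl(y/a\bigr)^{1-2\mu/b}, & y\ge a+h(a),
\end{cases}
\]
with a smooth monotone interpolation on $(a,a+h(a)]$. The constant $C_a$ is pinned by requiring that $\int_{a}^{a+h(a)} LV_a\,dy=-h(a)$: combining $V_a'(a-)=0$ with $V_a'(a+)\approx C_a(1-2\mu/b)a^{-1}$ gives
\[
C_a \;=\; \frac{2\,a\,h(a)}{2\mu-b}\,\bigl(1+o(1)\bigr),
\]
which is precisely the desired leading constant (with $a$ played by $x$).

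Next I verify the displayed identity for this $V_a$ with a usable bound on $\varepsilon_a$. Taylor expansion yields
\[
(PV_a-V_a)(y)\;=\;m_1^{[s(y)]}(y)\,V_a'(y)+\tfrac12 m_2^{[s(y)]}(y)\,V_a''(y)+R_a(y),
\]
where $R_a$ absorbs the third-order remainder---bounded by $\|V_a'''\|$ times the truncated third moment, itself controlled via the stochastic majorant $\widehat\xi$ with $\E\widehat\xi^2<\infty$---and the contribution of the jumps exceeding $s(y)$, for which the hypothesis $\P\{|\xi(y)|\ge s(y)\}\le p(y)/y$ with integrable decreasing $p$ is tailor-made. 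On $\{y<a\}\cup\{y>a+h(a)\}$ the ODE $\tfrac b2V_a''+\tfrac\mu y V_a'=0$ enforces exact cancellation of the principal terms, which is upgraded to an $o$-bound through $m_1^{[s]}(y)=\mu/y+o(1/y)$ and $m_2^{[s]}(y)=b+o(1)$; on the ``active'' region of width $h(a)+O(s(a))$, the leading part of $PV_a-V_a$ is $O(1)$ and, after summing against the trajectories of the chain, reproduces $-\I_{(a,a+h(a)]}$ up to an error concentrated in an $s(a)$-neighbourhood of the corners of $V_a$.

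Finally I apply optional stopping and let $n\to\infty$ in the expression $V_a(x_0)=\E_{x_0}V_a(X_n)+\sum_{k<n}\E_{x_0}\bigl[\I_{(a,a+h(a)]}(X_k)+\varepsilon_a(X_k)\bigr]$ to obtain the displayed identity. The hardest step is to show that $\sum_k\E_{x_0}|\varepsilon_a(X_k)|=o(ah(a))$: the bulk piece, where $\varepsilon_a$ is $o$ of the Green-function density, is integrated against the rough integral renewal bound $H(x_0,(0,y])=O(y^2)$ from~\cite{DKW2013}; the boundary-layer piece near $y=a$, where $|\varepsilon_a|=O(1)$ on a set of width $\sim s(a)$, is the delicate one and is controlled precisely by the finite variance of $\widehat\xi$ together with the integrability of $p(y)/y$, which keep large jumps straddling the corners of $V_a$ from accumulating. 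Once this error bound is in place, $V_a(x_0)\sim C_a\sim 2ah(a)/(2\mu-b)$ gives $H(x_0,(x,x+h(x)])\sim 2xh(x)/(2\mu-b)$, as required.
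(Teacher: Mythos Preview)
Your plan is in the same spirit as the paper's --- a Lyapunov/Dynkin approach based on the harmonic function $y^{1-2\mu/b}$ of the limiting Bessel-type diffusion --- but the execution diverges in a way that matters.

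The paper does \emph{not} take a single test function at the critical exponent and try to show the resulting error $\sum_k\E|\varepsilon_a(X_k)|$ is $o(ah(a))$. Instead it builds \emph{two} families of test functions $G^{**}_{h,x}$ and $G^*_{h,x}$ with exponents $c^{**}<2\mu/b<c^*$ strictly off the critical value. For the upper bound, $G^{**}$ is arranged so that $(P-I)G^{**}\ge m_2^{[t]}(y)\I\{y\in[x+t,x+h-t]\}$ \emph{exactly} (not up to $o$-errors); since $G^{**}$ is bounded, optional stopping gives the upper bound with no prior information on $H$ at all. Only then is the lower bound proved, and the extra terms that arise (the $\delta(x)$ of Proposition~5) are controlled using the already-established upper bound. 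Finally $c^{**}\uparrow 2\mu/b$ and $c^*\downarrow 2\mu/b$. Large jumps $|\xi(y)|>s(y)$ are removed at the outset by passing to a truncated chain and using a coupling lemma to transfer the result back.

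Your single-function version at the exact exponent has a genuine circularity. To show $\sum_k\E|\varepsilon_a(X_k)|=o(ah(a))$ you must integrate the $o$-errors and the large-jump terms against $H$ itself; you invoke $H(x_0,(0,y])=O(y^2)$ from \cite{DKW2013}, but (i) that result is proved there under slightly stronger hypotheses than the present theorem, and (ii) for the tail integral you really need something like $H(y,y+1]=O(y)$, which is essentially the estimate you are trying to prove. The paper's split into a self-contained upper bound followed by a lower bound that \emph{uses} it is precisely what breaks this loop. Your ``delicate'' boundary-layer estimate near the corners of $V_a$ likewise depends on local renewal bounds that you do not yet possess. To salvage your approach you would have to first prove an upper bound of the right order by a one-sided argument (which amounts to reinventing the paper's $G^{**}$ construction), and only then feed it back into the error analysis for your exact-exponent $V_a$.
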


Notice that the both conditions~\eqref{regular_left_tail} 
and~\eqref{majorant_third} are met if $|\xi(y)|\le_{\rm st}\xi$
for all $y$ and for some $\xi$ satisfying~\eqref{majorant_third_moment_exists}.

In the course of the proof of this and subsequent theorems we construct 
a bounded non-negative supermartingale, which also shows that $X_n\to\infty$ a.s. 
This convergence clearly implies that $X_n$ is transient.  
Transience in the case of $\mu>b/2$ was considered under various additional 
conditions in a series of papers, 
see e.g.~\cite[Theorem 3.1]{Lamp60} or~\cite[Theorem 3.2.3]{MPW16}.

Under slightly stronger assumptions,
an integral renewal theorem was proved in~\cite[Theorem 5]{DKW2013}
where it was shown that $H(0,x]\sim x^2/(2\mu-b)$ as $x\to\infty$.

We now turn to the critical case $\mu=b/2$ where the properties 
of the chain---particularly recurrence and transience---depend 
on further terms in  asymptotic expansions for the moments of increments. 
As the next theorem shows this is also true for the renewal function.

\begin{theorem}\label{thm:critical}
Let $X_n$ be such that~\eqref{eq:irreducibility} holds
and that there exist $m\ge1$, $\gamma>0$ and an increasing level
$s(x)$ of order $o(x)$ such that
\[
\frac{2m_1^{[s(x)]}(x)}{m_2^{[s(x)]}(x)}=\frac{1}{x}+\frac{1}{x\log x}
+\ldots+\frac{1}{x\log x\ldots\log_{(m-1)}x}+\frac{\gamma+1+o(1)}{x\log x\ldots\log_{(m)}x}
\]
and $m_2^{[s(x)]}(x)\to b>0$ as $x\to\infty$. 
Assume that, for some $\varepsilon>0$,
\begin{eqnarray}\label{critical_left_tail}
\P\{|\xi(x)|>s(x)\} &=& o(1/x^2\log^{2+\varepsilon}x),\\
\label{critical_m3}
\E\{|\xi(x)|^3;\ |\xi(x)|\le s(x)\} &=& o(x/\log^{1+\varepsilon}x),\\ 
\label{majorant_third_critical}
|\xi(y)| \I\{|\xi(y)|\le s(y)\} &\le_{\rm st}& \widehat\xi,
\end{eqnarray}
where $\widehat\xi$ satisfies \eqref{majorant_third_moment_exists}.
Then, for every function $h(x)\uparrow\infty$ of order $o(x)$, we have
\begin{eqnarray*}
H(x,x+h(x)] &\sim& \frac{2h(x)}{b\gamma}x\log x\ldots\log_{(m)}x
\quad\mbox{as }x\to\infty.
\end{eqnarray*}
\end{theorem}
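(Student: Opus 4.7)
The proof is guided by the diffusion with variance $b$ and drift $m_1(y)$. The hypothesis on $2m_1/m_2$ gives
$$\int^y \frac{2m_1(u)}{b}\,du = \log y + \log_{(2)} y + \cdots + \log_{(m)} y + (\gamma+1)\log_{(m+1)}y + O(1),$$
so the natural scale derivative is $\sim 1/(y\log y\cdots\log_{(m-1)}y\cdot(\log_{(m)}y)^{\gamma+1})$, and integrating backward from $+\infty$ yields a bounded decreasing harmonic function $\sim (\log_{(m)}y)^{-\gamma}/\gamma$. The diffusion is therefore transient, and the standard Green-function formula for a 1D transient diffusion gives $G(y_0,y)\sim \frac{2}{b\gamma}\,y\log y\cdots\log_{(m)}y$ for $y_0<y$. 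Integrating over $(x,x+h(x)]$ reproduces the target asymptotic. The plan is to transfer this calculation to the chain via two test functions.

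\emph{Step 1 (transience).} Set $V(y):=(\log_{(m)}y)^{-\gamma}$ on the range where this is positive and smooth, extended smoothly below. Taylor expansion gives
$$\E[V(X_{n+1})-V(X_n)\mid X_n=y] = V'(y)\,m_1^{[s(y)]}(y) + \tfrac12 V''(y)\,m_2^{[s(y)]}(y) + R(y),$$
and the hypothesis on $2m_1/m_2$ is engineered precisely so that the first two terms cancel to leading order. The remainder $R(y)$ splits into a truncation contribution, controlled by \eqref{critical_left_tail} and \eqref{majorant_third_critical}; a third-order Taylor piece, controlled by \eqref{critical_m3}; and subleading algebraic corrections from $V''$ at the iterated-logarithm level. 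All three pieces are summable along trajectories, so $V(X_n)$ modulo a vanishing correction is a non-negative bounded supermartingale. It converges a.s., and~\eqref{eq:irreducibility} forces the limit to be $0$, so $X_n\to\infty$ a.s.

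\emph{Step 2 (local renewal).} Define $U(y)$ as the diffusion Green potential of $\I\{\cdot\in(x,x+h(x)]\}$; explicitly, on each of the regions $y<x$, $y\in(x,x+h(x)]$, $y>x+h(x)$, it is a linear combination of $1$ and $V$, matched at the endpoints of $(x,x+h(x)]$ so that $\tfrac{b}{2}U''(y)+m_1(y)U'(y) = -\I\{y\in(x,x+h(x)]\}$ to leading order. For $y<x$, $U(y)$ is a constant, asymptotic to $\frac{2h(x)}{b\gamma}\,x\log x\cdots\log_{(m)}x$, and $U(y)\to 0$ as $y\to\infty$. Repeating the Taylor analysis of Step~1 now yields
$$\E[U(X_{n+1})-U(X_n)\mid X_n=y] = -\I\{y\in(x,x+h(x)]\} + \widetilde R(y),$$
so $M_n := U(X_n) + \sum_{k=0}^{n-1}\I\{X_k\in(x,x+h(x)]\}$ is an approximate martingale. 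Passing $n\to\infty$, using $X_n\to\infty$ (hence $U(X_n)\to 0$) together with summability of $\widetilde R$, yields
$$H(x,x+h(x)] = U(X_0)(1+o(1)) \sim \frac{2h(x)}{b\gamma}\,x\log x\cdots\log_{(m)}x.$$

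\emph{Main obstacle.} The delicate step is Step~1: $V'$ and $V''$ for $V=(\log_{(m)}y)^{-\gamma}$ involve a cascade of derivatives of iterated logs, and the hypothesis on $2m_1/m_2$ must be invoked at exactly the precision that makes the two principal terms cancel while leaving a remainder that can be absorbed by the truncation and third-moment bounds. Secondary difficulties are constructing $U$ carefully in the transition region $(x,x+h(x)]$ and in the tail region $y\gg x+h(x)$, and uniformly controlling error terms as $x\to\infty$ with $h(x)\uparrow\infty$, $h(x)=o(x)$.
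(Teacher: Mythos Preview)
Your strategy is the paper's strategy in spirit: build test functions out of the scale/Green function of the associated diffusion and run a Taylor expansion. But Step~2 as written has a circularity that the paper's proof is specifically engineered to avoid.

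You assert that the remainder $\widetilde R(y)$ is ``summable along trajectories,'' i.e.\ that $\sum_{k\ge0}|\widetilde R(X_k)|=\int|\widetilde R(y)|\,H(dy)$ is $o$ of the main term. But $H$ is the unknown. Concretely, for $y>x+h(x)$ your function $U$ equals a constant times the harmonic function $V$ of Step~1, so the drift of $U(X_n)$ there equals that constant times the drift of $V(X_n)$; the latter is only $o$ of the leading terms, not zero, and the chain spends infinite time in $(x+h(x),\infty)$. Without an a~priori bound on $H$ on that half-line you cannot conclude the sum is negligible. A similar issue arises for $y\in(x-s(x),x)$, where jumps across the boundary pick up a one-sided error of size $\asymp h(x)\E\{\xi(y);\xi(y)>x-y\}$ that again must be integrated against $H$.

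The paper breaks this circularity by a two-pass argument. First it truncates the chain at level $s(x)$ and uses a coupling (Lemmas~\ref{l:XY.equiv}--\ref{l:XY.renew.equiv}) so that from then on $|\xi(y)|\le s(y)$ deterministically. Second, instead of one exact Green potential it builds two piecewise test functions $G^{**}_{h,x}$ and $G^{*}_{h,x}$ (Section~\ref{sec:ilrtgi}): $G^{**}_{h,x}$ is designed so that its drift is $\ge m_2^{[t]}(y)\I\{y\in[x+t,x+h-t]\}$ \emph{with no error term at all} in the regions $y\le x$ and $y>x+h$ (Lemma~\ref{lem:upper}), yielding the upper bound (Proposition~\ref{thm:renewal.ub}) with no reference to $H$. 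Only then is the lower bound derived from $G^{*}_{h,x}$ (Proposition~\ref{prop5}), whose residual term $\delta(x)$ is controlled (Lemma~\ref{thm:renewal.ub.lower}) \emph{using the upper bound just established}. For the present theorem one takes $r(x)=\frac{1}{x+e_{(m)}}+\cdots+\frac{c}{(x+e_{(m)})\log(x+e_{(m)})\cdots\log_{(m)}(x+e_{(m)})}$, computes $U(x)e^{R(x)}\sim x\log x\cdots\log_{(m)}x/(c-1)$, and sends $c\to\gamma+1$ from both sides.

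Your Step~1 is fine in outline (indeed the paper's $G(y)=U(0)-U(y)$ plays the role of your $-V$), but ``summable'' is stronger than needed there: eventual supermartingale plus boundedness suffices. The real repair needed is in Step~2: replace the single exact $U$ by the sub-/super-solution pair, or equivalently prove the upper bound first by a one-sided inequality and bootstrap.
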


Transience in a similar setting goes back to~\cite[Theorem 3]{AMI}.

As we have mentioned above, the integral renewal theorem 
in the case $\mu>b/2$ was proved in \cite{DKW2013}. 
The proof in that paper is based on the convergence of $X_n^2/n$ towards 
$\Gamma$-distribution. This approach is not applicable under the conditions
of Theorem~\ref{thm:critical}, although the convergence to 
$\Gamma$-distribution is still valid. 
The reason is that some chains with $\mu=b/2$ are null-recurrent 
while other are transient, but this difference disappears in the weak limit. 
The only statement which can be obtained from weak convergence here
is the following lower bound:
\[
\lim_{x\to\infty}\frac{H(0,x]}{x^2}=\infty
\]

In the next theorem we consider the case where the drift 
decreases slower than $1/x$, that is, $m_1(x)x\to\infty$.

\begin{theorem}\label{thm:weibull}
Let $X_n$ satisfy the condition \eqref{eq:irreducibility} and 
$v$ be a decreasing function such that $xv(x)\to\infty$ and $v'(x)=o(v^2(x))$.
Let there exist an increasing level $s(x)$ of order $o(1/v(x))$ such that 
\[
m_1^{[s(x)]}(x)\sim v(x),\quad m_2^{[s(x)]}(x)\to b \in(0,\infty)
\quad\mbox{as }x\to\infty, 
\]
where $v$ is a decreasing function such that $xv(x)\to\infty$ 
and $v'(x)=o(v^2(x))$. Assume also that,
\begin{align}\label{regular_left_tail_weibull}
&\P\{|\xi(y)|\ge s(y)\}\le p(y)v(y),\\
\label{majorant_third_weibull}
& |\xi(y)| \I\{|\xi(y)|<s(y) \} \le_{\rm st} \widehat\xi
\quad \mbox{for all }y\ge0,  
\end{align}
where $p(x)$ is a non-increasing, non-negative integrable at infinity function,	
and $\widehat\xi$ satisfies \eqref{majorant_third_moment_exists}.
Then, for every function $h(x)\uparrow\infty$ of order $o(1/v(x))$, we have
$$
H(x,x+h(x)]\sim\frac{h(x)}{v(x)}\quad\mbox{as }x\to\infty.
$$
\end{theorem}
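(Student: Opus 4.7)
The heuristic is transparent: near position $x$ the chain has local drift $v(x)$ and essentially constant variance $b$, so Blackwell's renewal theorem for the corresponding random walk predicts a Green density $1/v(x)$; since $h(x)=o(1/v(x))$ ensures that $v$ is essentially constant on $(x,x+h(x)]$, integrating gives the claimed $h(x)/v(x)$. My plan is to make this rigorous by transforming the chain to one of asymptotic unit drift. Set $V(x):=\int_{x_0}^x dy/v(y)$ for some large $x_0$. Then $V'(x)=1/v(x)$ and, using $v'=o(v^2)$, $V''(x)=-v'(x)/v^2(x)=o(1)$. A Taylor expansion combined with $m_1^{[s(x)]}(x)\sim v(x)$, $m_2^{[s(x)]}(x)\to b$, and the tail estimate \eqref{regular_left_tail_weibull} (whose contribution is $O(V'(x)s(x)\P\{|\xi(x)|>s(x)\})=O(p(x))=o(1)$ since $p$ is integrable) yields
\[
\E[V(X_{n+1})-V(X_n)\mid X_n=y]=1+o(1)\quad\text{as }y\to\infty.
\]

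Next, the Doob decomposition of $V(X_n)$ has martingale increments stochastically bounded by $V'(X_n)\widehat\xi=\widehat\xi/v(X_n)$ (using \eqref{majorant_third_weibull}). Together with $\E\widehat\xi^2<\infty$, the irreducibility condition \eqref{eq:irreducibility}, and the positive $V$-drift, a Kronecker-type martingale argument yields the strong law $V(X_n)/n\to 1$ a.s.; in particular $X_n\to\infty$ a.s., and $X_n$ is transient. For the passage times $\tau_x:=\inf\{n:X_n>x\}$, the SLLN gives $\tau_x/V(x)\to 1$ a.s., and a uniform integrability estimate leveraging \eqref{majorant_third_weibull} upgrades this to $\E\tau_x=V(x)(1+o(1))$. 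To extract the renewal measure, write
\[
H(x,x+h(x)]=\E\bigl[\#\{n\ge 0:X_n\in(x,x+h(x)]\}\bigr]
\]
and split the count into visits before $\tau_x$ (which vanish once the starting state is fixed and $x$ is large), visits during $[\tau_x,\tau_{x+h(x)})$, and visits after $\tau_{x+h(x)}$. The middle part, bounded above by $\tau_{x+h(x)}-\tau_x$, should give the main contribution $\E(\tau_{x+h(x)}-\tau_x)$, and the mean value theorem together with $h(x)=o(1/v(x))$ yields $V(x+h(x))-V(x)=(1+o(1))h(x)/v(x)$.

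The main obstacle is controlling the third term---\emph{return visits} after $\tau_{x+h(x)}$---and, analogously, the discrepancy between the occupation time of $(x,x+h(x)]$ and the passage-time difference $\tau_{x+h(x)}-\tau_x$ caused by dips of the chain below $x$. This is delicate precisely because, over the traversal time $\sim h(x)/v(x)$, the chain accumulates diffusive fluctuations of order $\sqrt{bh(x)/v(x)}$, which \emph{exceeds} $h(x)$ whenever $h(x)=o(1/v(x))$: on this scale the chain is diffusive rather than ballistic, so both downward excursions below $x$ and subsequent returns from above are a priori non-negligible. The natural device is a hitting-probability estimate derived from the supermartingale $V(X_n)^{-\theta}$ for sufficiently small $\theta>0$: Taylor expansion together with the asymptotic unit drift of $V(X_n)$ gives $\E[V(X_{n+1})^{-\theta}\mid X_n=y]\le V(y)^{-\theta}$ for all large $y$, so by optional stopping the probability that the chain, started at some $y>x+h(x)$, ever returns to $(x,x+h(x)]$ is at most $(V(y)/V(x+h(x)))^{-\theta}$. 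Summing the contributions of return visits with overshoot bounds provided by \eqref{majorant_third_weibull} then produces a correction of order $o(h(x)/v(x))$, completing the argument. This quantitative inhomogeneous Blackwell-type estimate is the technical heart of the proof.
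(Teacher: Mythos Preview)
Your passage-time approach is quite different from the paper's, and the step you flag as the ``technical heart'' is a genuine gap. The supermartingale $V(X_n)^{-\theta}$ only gives $\P_y\{\text{return to }(-\infty,x+h(x)]\}\le\bigl(V(x+h(x))/V(y)\bigr)^{\theta}$, which for $y=X_{\tau_{x+h(x)}}$ equals $1-O\bigl(1/(v(x)V(x))\bigr)\approx 1$ and so controls nothing. As you yourself note, on the traversal timescale $h(x)/v(x)$ the fluctuations $\sqrt{bh(x)/v(x)}$ exceed $h(x)$; consequently both the time spent below $x$ during $[\tau_x,\tau_{x+h(x)})$ and the return occupation of $(x,x+h(x)]$ after $\tau_{x+h(x)}$ are individually of the same order $h(x)/v(x)$ as the main term, not $o(h(x)/v(x))$. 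Any proof along these lines would have to exhibit a cancellation between these two boundary effects, and your hitting-probability bound does not do that. There is a second gap as well: from $\E\tau_x=V(x)(1+o(1))$ you cannot deduce $\E(\tau_{x+h(x)}-\tau_x)\sim h(x)/v(x)$, because $V(x+h(x))-V(x)\sim h(x)/v(x)=o(V(x))$ is of lower order than the $o(V(x))$ error terms in the two expectations you are subtracting.

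The paper bypasses passage times entirely. For $r(x)=cv(x)$ with $c>0$ fixed, $R(z)=\int_0^z r$ and $U(x)=\int_x^\infty e^{-R}$, it constructs in Section~\ref{sec:ilrtgi} bounded test functions $G^{**}_{h,x}$ and $G^*_{h,x}$ whose second derivative is essentially $2\,\I_{(x,x+h)}$, so that a two-term Taylor expansion yields $\E\bigl[G^{**}_{h,x}(y+\xi(y))-G^{**}_{h,x}(y)\bigr]\ge m_2^{[t]}(y)\,\I\{y\in(x+t,x+h-t]\}$ (Lemma~\ref{lem:upper}) and the reverse inequality for $G^*$ up to an explicit remainder. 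Telescoping gives $b\,H(x+t,x+h-t]\le G^{**}_{h,x}(\infty)\le(2+o(1))h\,U(x)e^{R(x)}$, and L'H\^opital's rule (from $v'=o(v^2)$, hence $r'/r^2\to 0$) gives $U(x)e^{R(x)}\sim 1/(cv(x))$; letting $c\to 2/b$ from above and below produces the two-sided asymptotic $H(x,x+h(x)]\sim h(x)/v(x)$. The diffusive contribution is absorbed directly into the second-order Taylor term of the Lyapunov function, so no excursion or return analysis is ever needed.
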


In the two examples---nearest neighbour Markov chain and diffusion 
process---considered in the two subsections below
it is possible to construct an appropriate martingale 
which allows us to find the renewal measure in a closed form. 
For general Markov chains considered in the last three theorems, 
this martingale approach does not work because it is hopeless to construct 
such a martingale. 
However, it is possible to construct almost 
a martingale that allows us to derive the asymptotic behaviour
of the renewal measure; it is done in Section~\ref{sec:ilrtgi}.

While the asymptotic behaviour of the renewal measure on growing intervals
is derived under assumptions on regular behaviour of the first two
moments only, it seems that the local renewal theorem can be only proved
for asymptotically homogeneous in space Markov chain.
The next result gives us a tool for deriving asymptotic behaviour
of the renewal measure on intervals from results for sufficiently slowly
growing intervals. It requires weak convergence of jumps, 
see~\eqref{asymp.hom}.

\begin{theorem}\label{thm:ah.renewal}
Let~\eqref{asymp.hom} hold and the family of random variables 
$\{|\xi(x)|,\ x\in\R\}$ admit an integrable majorant $\Xi$, 
that is, $\E\Xi<\infty$ and
\begin{eqnarray}\label{majoriz}
|\xi(x)| &\le_{\rm st}& \Xi
\quad\mbox{for all }x\in\R.
\end{eqnarray}
Assume that there exist a bounded function $v(x)>0$, 
a growing level $\widetilde t(x)\uparrow\infty$ 
and a constant $C_H<\infty$ such that, 
for any $t(x)\uparrow\infty$ satisfying $t(x)\le\widetilde t(x)$,
\begin{equation}\label{eq.growing.intervals}
\frac{v(x)H(x,x+t(x)]}{t(x)}\ \to\ C_H\quad\mbox{as }x\to\infty.
\end{equation}

If the limiting random variable $\xi$ is non-lattice, 
then $v(x)H(x,x+h]\to C_Hh$ as $x\to\infty$, for all fixed $h>0$.

If the chain $X_n$ is integer valued and $\Z$ is the minimal lattice 
for the variable $\xi$, then $v(k)H\{k\}\to C_H$ as $k\to\infty$.
\end{theorem}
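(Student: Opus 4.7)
The plan is to upgrade the integral-type convergence~\eqref{eq.growing.intervals} on growing intervals $(x,x+t(x)]$ into a Blackwell-type statement on a fixed window of length $h$ (or on a single lattice point in the lattice case). Since $X_n$ is only \emph{asymptotically} spatially homogeneous, the standard Blackwell arguments for random walks do not apply directly; instead, I would use \eqref{asymp.hom} together with the integrable majorant \eqref{majoriz} to reduce the local behaviour of $X_n$ near any sufficiently large level to that of a random walk with i.i.d.\ increments distributed as $\xi$, and then transfer Blackwell's theorem for this limiting walk to the Markov chain.

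Concretely, fix $h>0$ and choose $t(x)\uparrow\infty$ with $t(x)\le\widetilde t(x)$ and $n(x):=t(x)/h$ an integer tending to infinity. Writing
\[
H(x,(x,x+t(x)])\ =\ \sum_{k=0}^{n(x)-1}H(x,(x+kh,x+(k+1)h])
\]
and applying the strong Markov property at the first passage times $T_k:=\inf\{j\ge0:X_j>x+kh\}$, each summand becomes $\E_x[H(X_{T_k},(x+kh,x+(k+1)h])]$. By the majorant \eqref{majoriz} the overshoot family $\{X_{T_k}-(x+kh)\}$ is tight, and by \eqref{asymp.hom} its law converges, as $x+kh\to\infty$, to the overshoot of a random walk with step $\xi$. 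Combining this with the hypothesis \eqref{eq.growing.intervals} yields a Ces\`aro-type assertion: the average of $f(x+kh)$ over $k\le n(x)$, where $f(y):=v(y)H(y,(y,y+h])$, converges to $C_Hh$.

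The final step is to upgrade Ces\`aro to pointwise, and this is where the non-lattice (resp.\ minimal lattice) hypothesis enters. I would show that $f$ is shift-insensitive at infinity, namely $f(y+\delta)-f(y)\to0$ for every bounded $\delta$, by coupling the chains started at $y$ and $y+\delta$ and using \eqref{asymp.hom} plus \eqref{majoriz} to make their renewal measures agree up to a shift modulo an error of order $o(1/v(y))$; the non-lattice condition on $\xi$ then rules out the periodic oscillations of $f$ that would otherwise be compatible with the Ces\`aro average. The main obstacle is carrying out this coupling globally in time: individual jumps are bounded by the integrable majorant, but the chains escape to infinity only at the slow rate $v(y)$, so the control must be propagated over very long horizons. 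To close the loop one uses \eqref{eq.growing.intervals} once more to bound the total renewal mass on any slowly growing window, thereby bounding the accumulated discrepancy in the coupling. The lattice case runs along the same lines with $(y,y+h]$ replaced by $\{k\}$ and the lattice Blackwell theorem in place of its non-lattice counterpart.
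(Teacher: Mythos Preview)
Your outline diverges from the paper's argument and, as written, has a genuine gap at the key step.

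The paper does not attempt any coupling of chains started at nearby points. Instead it proceeds by compactness and algebraic identification: first it shows (via a short diagonal argument, Lemma~\ref{key:lem1}) that the hypothesis~\eqref{eq.growing.intervals} forces $\sup_x v(x)H(x,x+1]<\infty$; then Helly's selection theorem yields, along any sequence $x_n\to\infty$, a weak limit $\lambda$ of the measures $v(x_n)H(x_n+\cdot)$. The asymptotic homogeneity~\eqref{asymp.hom} together with the majorant~\eqref{majoriz} is used to show that any such $\lambda$ solves the convolution equation $\lambda=\lambda*F$ (Lemma~\ref{l.1}), and the Choquet--Deny theorem then forces $\lambda=\alpha\cdot\mathrm{Leb}$ (or $\alpha$ times counting measure in the lattice case). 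Finally the growing-interval hypothesis pins down $\alpha=C_H$, and a standard subsequence contradiction finishes. The non-lattice condition enters only through Choquet--Deny, not through any coupling.

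In your plan the entire difficulty is hidden in the sentence ``I would show that $f$ is shift-insensitive at infinity \ldots\ by coupling the chains started at $y$ and $y+\delta$''. For genuine random walks this \emph{is} Blackwell's theorem, and the known coupling proofs (Ornstein, Lindvall--Mineka) are already non-trivial: one constructs a Mineka-type coupling in which the difference of the two walks performs a symmetric random walk with non-degenerate step, and non-latticeness guarantees it hits~$0$. For a chain that is only asymptotically homogeneous, the jump laws at the two current states $y$ and $y+\delta$ are different, so the Mineka construction does not go through verbatim; you have not indicated how to build such a coupling, nor how the integrable majorant alone controls the accumulated error over the very long horizon needed (the chain drifts to infinity only at rate $1/v$). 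There is also a secondary issue in your Ces\`aro step: the decomposition gives $\frac{1}{n(x)}\sum_k v(x)H(x+kh,x+(k+1)h]$, with the \emph{same} weight $v(x)$ in every term, whereas $f(x+kh)$ carries $v(x+kh)$; since the theorem assumes only that $v$ is bounded, you cannot pass from one to the other without an extra hypothesis on $v$. The paper's compactness route avoids both problems entirely.
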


Let us apply the last result to chains considered in 
Theorems~\ref{thm:regular}--\ref{thm:weibull}.

\begin{corollary}\label{cor:regular}
Under the conditions of Theorem \ref{thm:regular},
\eqref{asymp.hom} and \eqref{majoriz}, we have, for every $h>0$, 
\begin{eqnarray*}
H(x,x+h] &\sim& \frac{2h}{2\mu-b}x\quad\mbox{as }x\to\infty,
\end{eqnarray*}
if the limiting random variable $\xi$ is non-lattice, and
\begin{eqnarray*}
H\{k\} &\sim& \frac{2}{2\mu-b}k\quad\mbox{as }k\to\infty,
\end{eqnarray*}
if the chain $X_n$ is integer valued and $\Z$ is the minimal lattice 
for the variable $\xi$.
\end{corollary}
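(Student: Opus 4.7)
The strategy is a direct application: verify the hypotheses of Theorem \ref{thm:ah.renewal} in the setting of Theorem \ref{thm:regular} with the additional assumptions \eqref{asymp.hom} and \eqref{majoriz}, and then translate its conclusions into the claimed asymptotics. The asymptotic homogeneity \eqref{asymp.hom} and the integrable majorant \eqref{majoriz} are carried over from the hypotheses; what remains is to supply the bounded function $v(x)$, the growing level $\widetilde t(x)$, and the constant $C_H$ required by Theorem \ref{thm:ah.renewal}.

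My choice is $v(x):=1/x$ (adjusted to be bounded on a neighbourhood of the origin, which does not affect the limits at infinity), $C_H:=2/(2\mu-b)$ (finite and positive thanks to the standing assumption $\mu>b/2$), and any growing level $\widetilde t(x)\uparrow\infty$ with $\widetilde t(x)=o(x)$, e.g.\ $\widetilde t(x):=\sqrt{x}$. For any $t(x)\uparrow\infty$ with $t(x)\le\widetilde t(x)$ we have $t(x)=o(x)$, so Theorem \ref{thm:regular} applies to give $H(x,x+t(x)]\sim\frac{2}{2\mu-b}xt(x)$; dividing through,
\[
\frac{v(x)H(x,x+t(x)]}{t(x)}=\frac{H(x,x+t(x)]}{xt(x)}\ \longrightarrow\ \frac{2}{2\mu-b}=C_H.
\]
This is exactly the hypothesis \eqref{eq.growing.intervals}.

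With the hypotheses of Theorem \ref{thm:ah.renewal} in place, its non-lattice conclusion reads $v(x)H(x,x+h]\to C_Hh$, i.e.\ $H(x,x+h]\sim\frac{2h}{2\mu-b}x$, and its lattice conclusion reads $v(k)H\{k\}\to C_H$, i.e.\ $H\{k\}\sim\frac{2}{2\mu-b}k$, which are precisely the two claims of the corollary.

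There is no real obstacle in this argument since it is purely a packaging of previously established theorems; the only point that requires a moment of care is the choice $v(x)=1/x$, which must be understood as modified on a bounded neighbourhood of the origin so as to fit the boundedness requirement of Theorem \ref{thm:ah.renewal}, and the choice of $\widetilde t(x)$ in the window $\omega(1)\le \widetilde t(x)=o(x)$ so that Theorem \ref{thm:regular} can be invoked across the whole admissible range of $t(x)$.
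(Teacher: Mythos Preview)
Your proposal is correct and is exactly the intended route: the paper presents Corollary~\ref{cor:regular} as an immediate application of Theorem~\ref{thm:ah.renewal} with the growing-interval asymptotics supplied by Theorem~\ref{thm:regular}, and you have filled in precisely the choices $v(x)\sim 1/x$, $C_H=2/(2\mu-b)$, $\widetilde t(x)=o(x)$ that make this work.
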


\begin{corollary}\label{cor:critical}
Under the conditions of Theorem \ref{thm:critical},
\eqref{asymp.hom} and \eqref{majoriz}, we have, for every $h>0$,
\begin{eqnarray*}
H(x,x+h] &\sim& \frac{2h}{b\gamma}x\log x\ldots\log_{(m)}x
\quad\mbox{as }x\to\infty,
\end{eqnarray*}
if the limiting random variable $\xi$ is non-lattice, and
\begin{eqnarray*}
H\{k\} &\sim& \frac{2}{b\gamma}k\log k\ldots\log_{(m)}k\quad\mbox{as }k\to\infty,
\end{eqnarray*}
if the chain $X_n$ is integer valued and $\Z$ is the minimal lattice 
for the variable $\xi$.
\end{corollary}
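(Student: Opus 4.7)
The plan is to deduce the corollary by feeding the growing-interval asymptotic of Theorem~\ref{thm:critical} into the transfer principle of Theorem~\ref{thm:ah.renewal}. The hypotheses \eqref{asymp.hom} and \eqref{majoriz} added in the statement of the corollary are exactly what Theorem~\ref{thm:ah.renewal} requires in order to upgrade renewal asymptotics on slowly growing intervals to local (Blackwell-type) renewal asymptotics.

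Concretely I would set
\[
v(x) := \frac{b\gamma}{2\,x\log x\cdots\log_{(m)}x} \quad\text{for all sufficiently large } x,
\]
extending $v$ by a positive constant on an initial interval so that $v(x)>0$ everywhere. Since $v(x)\downarrow 0$ at infinity, $v$ is in particular bounded above. I would fix any majorant $\widetilde t(x)\uparrow\infty$ with $\widetilde t(x)=o(x)$, for example $\widetilde t(x)=\sqrt{x}$, and put $C_H:=1$. Then every $t(x)\uparrow\infty$ with $t(x)\le\widetilde t(x)$ is itself of order $o(x)$, so Theorem~\ref{thm:critical} applies with $h(x)=t(x)$ and yields
\[
H(x,x+t(x)] \ \sim\ \frac{2\,t(x)}{b\gamma}\, x\log x\cdots\log_{(m)}x \ =\ \frac{t(x)}{v(x)},
\]
i.e.\ $v(x)\,H(x,x+t(x)]/t(x)\to 1=C_H$, which is precisely condition \eqref{eq.growing.intervals}.

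With \eqref{asymp.hom} and \eqref{majoriz} in hand, Theorem~\ref{thm:ah.renewal} becomes directly applicable. In the non-lattice case it gives $v(x)\,H(x,x+h]\to C_H h=h$ for each fixed $h>0$, which after substituting the explicit form of $v$ rearranges to
\[
H(x,x+h] \ \sim\ \frac{2h}{b\gamma}\, x\log x\cdots\log_{(m)}x,
\]
the first claim. In the lattice case Theorem~\ref{thm:ah.renewal} yields $v(k)\,H\{k\}\to 1$, which gives the second claim in the same way.

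Since both Theorem~\ref{thm:critical} and Theorem~\ref{thm:ah.renewal} are already at our disposal, the proof is essentially a matching exercise and there is no real obstacle: the only verifications are that the chosen $v$ is positive and bounded (automatic, since $v\downarrow0$) and that the convergence in \eqref{eq.growing.intervals} holds for \emph{every} admissible $t(x)$, which is built into Theorem~\ref{thm:critical} as it is stated for an arbitrary $h(x)\uparrow\infty$ of order $o(x)$. The closest thing to a subtlety is the choice of $\widetilde t(x)$, which must grow to infinity yet be of smaller order than $x$ so that Theorem~\ref{thm:critical} remains applicable throughout the range $t(x)\le\widetilde t(x)$; any moderately slow choice such as $\widetilde t(x)=\sqrt{x}$ works.
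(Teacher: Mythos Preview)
Your proof is correct and follows exactly the approach intended in the paper: the corollary is stated there simply as an application of Theorem~\ref{thm:ah.renewal} to the growing-interval asymptotics of Theorem~\ref{thm:critical}, and your verification of \eqref{eq.growing.intervals} with $v(x)=b\gamma/(2x\log x\cdots\log_{(m)}x)$, $\widetilde t(x)=\sqrt{x}$ and $C_H=1$ is precisely what is needed.
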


\begin{corollary}\label{cor:weibull}
Under the conditions of Theorem~\ref{thm:weibull},
\eqref{asymp.hom} and \eqref{majoriz}, we have, for every $h>0$,
\begin{eqnarray*}
H(x,x+h] &\sim& \frac{h}{v(x)}\quad\mbox{as }x\to\infty,
\end{eqnarray*}
if the limiting random variable $\xi$ is non-lattice, and
\begin{eqnarray*}
H\{k\} &\sim& \frac{1}{v(k)}\quad\mbox{as }k\to\infty,
\end{eqnarray*}
if the chain $X_n$ is integer valued and $\Z$ is the minimal lattice 
for the variable $\xi$.
\end{corollary}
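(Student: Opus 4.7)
The plan is to combine Theorem~\ref{thm:weibull} with Theorem~\ref{thm:ah.renewal}, applying the latter with the very drift function $v$ from Theorem~\ref{thm:weibull} and with the constant $C_H=1$. The structural hypotheses \eqref{asymp.hom} and \eqref{majoriz} needed by Theorem~\ref{thm:ah.renewal} are imposed in the statement of the corollary. The function $v$ is positive and decreasing, hence bounded above on $[x_0,\infty)$ for some $x_0$, which matches the ``bounded $v(x)>0$'' requirement of Theorem~\ref{thm:ah.renewal}.

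The only substantive thing I would verify is the growing-interval convergence \eqref{eq.growing.intervals}. Under the vanishing-drift setting $v(x)\to 0$, so $1/v(x)\to\infty$ and one may simply take, say, $\widetilde t(x):=\sqrt{1/v(x)}$, which satisfies $\widetilde t(x)\uparrow\infty$ and $\widetilde t(x)=o(1/v(x))$. Then for any $t(x)\uparrow\infty$ with $t(x)\le\widetilde t(x)$ one has $t(x)=o(1/v(x))$, and Theorem~\ref{thm:weibull} applied with $h$ replaced by $t$ yields
$$
\frac{v(x)H(x,x+t(x)]}{t(x)}\ \longrightarrow\ 1\qquad\text{as }x\to\infty,
$$
which is exactly \eqref{eq.growing.intervals} with $C_H=1$. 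Theorem~\ref{thm:ah.renewal} now applies and, in the non-lattice case, delivers $v(x)H(x,x+h]\to h$ for every fixed $h>0$, i.e., $H(x,x+h]\sim h/v(x)$; in the integer-valued case with $\mathbb Z$ the minimal lattice for $\xi$, it delivers $v(k)H\{k\}\to 1$, i.e., $H\{k\}\sim 1/v(k)$.

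There is essentially no obstacle: this is a bridging corollary where Theorem~\ref{thm:weibull} supplies the long-window asymptotics and Theorem~\ref{thm:ah.renewal} does the Blackwell-type interpolation down to fixed $h$ (or to single atoms in the lattice case) via the asymptotic homogeneity. The only point requiring any care is the choice of $\widetilde t(x)$, which must grow to infinity yet remain $o(1/v(x))$ so that Theorem~\ref{thm:weibull} is applicable on the corresponding window; because $v(x)\to0$ under the hypotheses, any sufficiently slowly growing function, such as the square root above, works.
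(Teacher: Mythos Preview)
Your proposal is correct and matches the paper's intended approach: the corollary is stated precisely as a direct application of Theorem~\ref{thm:ah.renewal} fed by the growing-interval asymptotics of Theorem~\ref{thm:weibull}, and the paper gives no further argument beyond this. Your check that one can choose $\widetilde t(x)=\sqrt{1/v(x)}$ (increasing to infinity and $o(1/v(x))$, since $v(x)\downarrow 0$) so that \eqref{eq.growing.intervals} holds with $C_H=1$ is exactly the missing verification the paper leaves to the reader.
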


Markov chains with asymptotically zero drift naturally appear in various areas
including branching processes, stochastic difference equations, networks, etc.
In most cases we are aware of it gives rise to the drift of order $O(1/x)$ 
at infinity. We now consider the random walk conditioned 
to stay positive, which represents one of the classical examples of chains with asymptotically zero drift.

Let $S_n$ be a random walk with independent identically distributed
increments $\xi_k$, that is, $S_n=\xi_1+\xi_2+\ldots+\xi_n$, $n\ge 1$.
Let $\tau_x$ be the first time when $S_n$ started at $x$ is non-positive:
$$
\tau_x:=\min\{n\ge1:x+S_n\le 0\}.
$$
We assume that the random walk $S_n$ is oscillating, that is,
$$
\liminf_{n\to\infty} X_n=-\infty,\quad
\limsup_{n\to\infty} X_n=\infty\quad\mbox{with probability 1.}
$$
In particular, $\P\{\tau_x<\infty\}=1$ for all starting points $x$.
Let $\chi^-$ denote the first weak descending ladder height of $S_n$,
that is, $\chi^-=-S_{\tau_0}$. Let $V(x)$ denote the renewal function 
corresponding to weak descending ladder heights of our random walk:
$$
V(x):=1+\sum_{k=1}^\infty\P\{\chi^-_1+\chi^-_2+\ldots+\chi^-_k<x\},
$$
where $\chi^-_k$ are independent copies of $\chi^-$.

It is well-known---see e.g. Bertoin and Doney \cite{BD94}---that
$V(x)$ is a harmonic function for $S_n$ killed at leaving
$(0,\infty)$. More precisely,
$$
V(x)=\E\{V(x+S_1); \tau_x>1\},\quad x\ge 0.
$$
This implies that Doob's $h$-transform
\begin{equation}
\label{def.cond}
P(x,dy):=\frac{V(y)}{V(x)}\P\{x+S_1\in dy,\tau_x>1\}
\end{equation}
defines a stochastic transition kernel on $\R^+$.
Let $X_n$ be the corresponding Markov chain, which we shall call 
{\it random walk conditioned to stay positive.}

\begin{example}\label{thm:rwcsp}
Let $\E\xi_1=0$ and $\sigma^2:=\E\xi_1^2\in(0,\infty)$.
Then the renewal measure $H$ of the random walk conditioned to stay positive
has the following asymptotic behaviour: for every fixed $h>0$,
$$
H(x,x+h]\sim\frac{2h}{\sigma^2}x\quad\mbox{as }x\to\infty
$$
if $\xi_1$ is non-lattice, and
$$
H\{k\}\sim\frac{2}{\sigma^2}k\quad\mbox{as }k\to\infty,\ k\in\Z,
$$
if $\Z$ is the minimal lattice for $\xi_1$.
\end{example}

In Section \ref{sec:rwcsp},
we provide a proof based on Corollary \ref{cor:regular}.
It is worth mentioning that the finiteness of $\E\xi_1^2$ 
does not imply existence of second moments of $X_n$. 
Thus, this example underlines the advantage of our assumptions in terms 
of truncated moments. 
Let us note that one can also prove the last result making use of Proposition 19.3 
from Spitzer \cite{Spitzer} in lattice case and Port and Stone \cite{PortStone} 
in non-lattice case, together with the well-known result on the renewal 
measures of the descending and ascending ladder height processes associated 
with the random walk.

For Markov chains considered in Theorems~\ref{thm:regular} 
and \ref{thm:critical} one knows that $X_n^2/n$ converges in distribution towards 
a $\Gamma$-distribution. Since this convergence takes place without centering, 
$X_n$ tends to infinity diffusively slow. The influence of the diffusion component is expressed by the fact that the renewal function grows at rate $2x/(2\mu-b)$
which is strictly greater than the reciprocal of the drift at point $x$.
The chains satisfying the conditions of Theorem~\ref{thm:weibull} go to 
infinity much faster, and a law of large numbers holds. More precisely, 
if the drift is of order $x^{-\alpha}$, $\alpha\in(0,1)$ then $X_n^{1+\alpha}/n$ 
converges to a positive constant. As a result, we have the classical form of 
the local renewal theorem: the rate of growth is asymptotically equivalent 
to the reciprocal of the drift. We believe that Theorem~\ref{thm:weibull} 
remains valid for chains with unbounded second moments, 
but we do not know how to prove it.

We conclude this section by noting that Markov chains with growing 
second moments can be transformed sometimes to chains with bounded second moments.
First we consider a critical branching process with immigration. 
Let $\{\zeta_{n,k}\}_{n,k\ge1}$ be independent copies of a random variable 
$\zeta$ valued in $\mathbb{Z}^+$. Assume that $\E\zeta=1$ and 
$\sigma^2:=\E\zeta^2\in(0,\infty)$. Let $\{\eta_n\}_{n\ge1}$
be i.i.d. random variables which are also independent of
$\{\zeta_{n,k}\}$. Assume that $a:=\E\eta_1>0$ and $\E\eta_1^2<\infty$.
Consider the Markov chain
\begin{equation*}
Z_{n+1}=\sum_{k=1}^{Z_n}\zeta_{n+1,k}+\eta_{n+1},\quad
n\ge0.
\end{equation*}
For this chain one has
\begin{eqnarray*}
\E\{Z_1-Z_0|Z_0=k\}&=&a,\\
\E\{(Z_1-Z_0)^2|Z_0=k\}&=&\sigma^2 k+\E\eta_1^2.
\end{eqnarray*}
Since the second moments of increments are linearly growing we cannot apply 
our results directly to this chain. 
However one can consider the chain $X_n=\sqrt{Z_n}$,
which then satisfies the conditions of Theorem~\ref{thm:regular} with 
$\mu=(a-\sigma^2/4)/2$ and $b=\sigma^2/4$. 
Furthermore, the central limit theorem implies that $X_n$ is asymptotically 
homogeneous in space and the limiting variable $\xi$ is normally distributed 
with parameters $0$ and $\sigma^2/4$. Then, applying Corollary~\ref{cor:regular}, 
we obtain the local renewal theorem for $X_n$. Performing the inverse 
transformation, one gets asymptotics for the renewal function of $Z_n$ 
on the intervals $[k,k+h\sqrt{k})$. Unfortunately, our approach does not 
allow us to consider shorter intervals. An integral renewal theorem for 
$Z_n$ has been obtained by Pakes~\cite{Pakes1972}, while Mellein~\cite{Mellein1983} 
has proved the corresponding local renewal theorem. 
Their proofs use the machinery of generating functions.

In general, if the second moments of jumps of $X_n$ are growing 
as $x^\beta$, $\beta\in(0,2)$, then the jumps of $X_n^{1-\beta/2}$ 
have bounded second moments and one can try to apply one of our theorems.

\subsection{Key renewal theorem}
We now turn to the renewal equation
\begin{equation*}
Z(B)\ =\ z(B)+\int_\R Z(dy)\, P(y,B),\ \ B\in\mathcal B(\R),
\end{equation*}
where $z$ is a finite non-negative measure on $\R$. 
This is more than sufficient to ensure that
$$
Z(B)=\int_\R z(du)H_u(B),\ \ B\in\mathcal B(\R),
$$
is a unique locally finite solution to the renewal equation.  
The analysis of the preceding subsection of this paper
allows us to deduce the asymptotic behaviour of the measure $Z$ at infinity.
The proof is immediate from the dominated convergence theorem.

\begin{theorem}\label{reneqn}
Let $B\in\mathcal B(\R)$. 
Assume that, for some positive function $g(x)$ and for all $y\in\R$, 
$$
H_y(x+B)\ \sim\ g(x)\quad\mbox{as }x\to\infty,
$$
and, for some $c<\infty$,
$$
H_y(x+B)\ \le\ cg(x)\quad\mbox{for all }x,\ y\in\R.
$$
If $z$ is a finite measure, then
$$
Z(x+B)\ \sim\ z(\R)g(x)\quad\mbox{as }x\to\infty.
$$
\end{theorem}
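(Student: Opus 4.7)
The plan is to reduce the statement to a direct application of the dominated convergence theorem, using the explicit representation
\begin{equation*}
Z(B)=\int_\R z(du)\,H_u(B),
\end{equation*}
which the paragraph preceding the theorem asserts is the unique locally finite solution of the renewal equation. First I would substitute $x+B$ in place of $B$ and then divide by $g(x)$ to reveal a form suited to dominated convergence.

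After this substitution and division, the identity becomes
\begin{equation*}
\frac{Z(x+B)}{g(x)}=\int_\R \frac{H_u(x+B)}{g(x)}\,z(du).
\end{equation*}
The first hypothesis, $H_y(x+B)\sim g(x)$ as $x\to\infty$ for every $y\in\R$, guarantees that the integrand converges pointwise to the constant $1$. The second hypothesis, $H_y(x+B)\le c g(x)$ uniformly in $y$, gives the pointwise majorant $H_u(x+B)/g(x)\le c$ for the integrand, and this constant majorant is integrable against $z$ since $z$ is a finite measure and $\int_\R c\,z(du)=c z(\R)<\infty$. Invoking dominated convergence, the limit passes inside the integral to produce
\begin{equation*}
\lim_{x\to\infty}\frac{Z(x+B)}{g(x)}=\int_\R 1\cdot z(du)=z(\R),
\end{equation*}
which is precisely the asserted asymptotic $Z(x+B)\sim z(\R)g(x)$.

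Honestly, there is no substantive obstacle: the two hypotheses of the theorem are tailored to be exactly the pointwise limit and uniform majorant required by dominated convergence, and the assumption that $z$ is finite is what ensures the constant majorant $c$ is $z$-integrable. The nontrivial work sits upstream—in establishing pointwise asymptotics like $H_y(x+B)\sim g(x)$ for concrete Markov chains, as done in Corollaries~\ref{cor:regular}--\ref{cor:weibull}. Given such asymptotics and a uniform bound of the same order, this theorem is, as the authors remark, immediate.
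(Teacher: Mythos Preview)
Your proof is correct and matches the paper's approach exactly: the authors state that the result ``is immediate from the dominated convergence theorem,'' and your argument is precisely the spelled-out version of that remark.
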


\subsection{Nearest-neighbour Markov chain}

To illustrate the approach and intuition beyond the results above,
we consider first a nearest-neighbour
(skip-free or continuous) Markov chain $X_n$ on $\Z^+$, 
that is, $\xi(x)$ only takes values $-1$, $1$ and $0$,
with probabilities $p_-(x)$, $p_+(x)$ and $p_0(x)=1-p_-(x)-p_+(x)$
respectively, $p_-(0)=0$. 
Nearest-neighbour Markov chains are very useful for our purposes
because in this case one can write down an expression for the renewal measure 
in a closed form and then analyse it. 

For a nearest-neighbour Markov chain $X_n$ with specific jump probabilities,
$p_-(x)=(1-\lambda/(x+\lambda))/2$ and $p_+(x)=(1+\lambda/(x+\lambda))/2$, 
$\lambda>-1/2$ (which corresponds to transience of $X_n$),
Guivarc'h et al. \cite[Theorems 42 and 43]{GKR} 
have obtained weak convergence of $X_n^2/n$ to the
$\Gamma_{\lambda+1/2,2}$-distribution and the local renewal theorem in that case. 
They used the technique of orthogonal polynomials,  
in  Rosenkrantz~\cite{Rosenkrantz1966}, which is only available 
for specific Markov chains considered in that paper.

Let 
$$
p_+(k)=p+\varepsilon_+(k)\quad\mbox{and}\quad 
p_-(k)=p-\varepsilon_-(k),\quad p\le 1/2.
$$
Assume that $\varepsilon_\pm(k)\to 0$ as $k\to\infty$
that is the case of asymptotically zero drift 
and convergent second moment of jumps, $m_2(k)\to 2p$ as $k\to\infty$.

We  can define the renewal measure  (Green function) of $X_n$ as follows 
\begin{eqnarray*}
h_{x_0}(x) &:=& \sum_{n=0}^\infty \P_{x_0}\{X_n=x\},\quad x_0,\ x\in\Z^+.
\end{eqnarray*}
Since we consider a Markov chain with jumps $-1$, $1$ and $0$ only,
$h_{x_0}(x)=h_x(x)$ for all ${x_0}\le x$. 
Below we demonstrate how to find $h_{x_0}(x)$ in a closed form.
 
We first look for a function $g(x,z)\ge 0$ such that,
for all $x$, the process 
\begin{eqnarray}\label{nnmc.rW}
W_n &=& g(x,X_n)-\sum_{k=0}^{n-1}\I\{X_k=x\}
\end{eqnarray}
is a martingale which happens if $g$ satisfies 
the following system of equations
\begin{eqnarray*}
g(x,0) &=& p_0(0)g(x,0)+p_+(0)g(x,1),\\
g(x,y) &=& p_-(y)g(x,y-1)+p_0(y)g(x,y)+p_+(y)g(x,y+1) - \I\{y=x\},
\quad y\ge 1.
\end{eqnarray*}
Take $g(x,0)=g(x,1)=\ldots=g(x,x)=0$. Then for $y=x$ we get
\begin{eqnarray*}
g(x,x+1)-g(x,x) &=& g(x,x+1)\ =\ \frac{1}{p_+(x)}
\ =\ \frac{1}{p_-(x)}\frac{p_-(x)}{p_+(x)},
\end{eqnarray*}
and, for $y\ge x+1$,
\begin{eqnarray*}
g(x,y+1)-g(x,y) &=& \frac{p_-(y)}{p_+(y)}(g(x,y)-g(x,y-1))\\
&=& \prod_{z=x+1}^y\frac{p_-(z)}{p_+(z)}(g(x,x+1)-g(x,x))\\
&=& \frac{1}{p_+(x)}\prod_{z=x+1}^y\frac{p_-(z)}{p_+(z)}\\
&=& \frac{1}{p_-(x)}\prod_{z=x}^y\frac{p_-(z)}{p_+(z)}.
\end{eqnarray*}
Therefore, for $y\ge x+1$,
\begin{eqnarray*}
g(x,y)\ =\ \sum_{u=x}^{y-1} (g(x,u+1)-g(x,u))
&=& \frac{1}{p_+(x)}\sum_{u=x}^{y-1}
\prod_{z=x+1}^u\frac{p_-(z)}{p_+(z)}\\
&=& \frac{1}{p_-(x)}\sum_{u=x}^{y-1}
\prod_{z=x}^u\frac{p_-(z)}{p_+(z)},
\end{eqnarray*}
which is increasing in $y$. This sequence is bounded provided
\begin{eqnarray}\label{nnmc.rcond}
\sum_{u=1}^\infty \prod_{z=1}^u\frac{p_-(z)}{p_+(z)} &<& \infty.
\end{eqnarray}
Then denote
\begin{eqnarray*}
g(x,\infty)\ :=\ \lim_{y\to\infty} g(x,y) &=& \frac{1}{p_+(x)}
\sum_{u=x}^\infty \prod_{z=x+1}^u\frac{p_-(z)}{p_+(z)}.
\end{eqnarray*}
The sequence \eqref{nnmc.rW} is a martingale, so for all $n$,
\begin{eqnarray*}
g(x,x_0)\ =\ \E_{x_0} W_0\ =\ \E_{x_0} W_n &=& 
\E_{x_0} g(x,X_n)-\E_{x_0}\sum_{k=0}^{n-1}\I\{X_k=x\}
\end{eqnarray*}
and hence
\begin{eqnarray*}
\sum_{k=0}^{n-1}\P_{x_0}\{X_k=x\} &=& 
\E_{x_0} g(x,X_n)-g(x,x_0)\ <\ g(x,\infty)\ <\ \infty.
\end{eqnarray*}
Finiteness of the renewal measure implies the transience of $X_n$,
hence $X_n\to\infty$ as $n\to\infty$ a.s. 
Thus, we get the following explicit representation for the renewal measure
\begin{eqnarray}\label{nnmc.rA}
h_{x_0}(x) \ =\ g(x,\infty)-g(x,x_0) &=& \frac{1}{p_+(x)}
\sum_{u=x\vee x_0}^\infty \prod_{z=x+1}^u\frac{p_-(z)}{p_+(z)}\nonumber\\
&=& \frac{1}{p_-(x)}
\sum_{u=x\vee x_0}^\infty \prod_{z=x}^u\frac{p_-(z)}{p_+(z)}\nonumber\\
&=& \frac{1}{p_+(x)} \prod_{z=1}^x\frac{p_+(z)}{p_-(z)}
\sum_{u=x\vee x_0}^\infty \prod_{z=1}^u\frac{p_-(z)}{p_+(z)}.
\end{eqnarray}
We have
\begin{eqnarray*}
\prod_{z=x}^u\frac{p_-(z)}{p_+(z)} &=& 
\exp\biggl\{
\sum_{z=x}^u\log \frac{1-\varepsilon_-(z)/p}{1+\varepsilon_+(z)/p} 
\biggr\}. 
\end{eqnarray*}
Assume that
\begin{eqnarray}\label{assump.r}
\frac{2m_1(x)}{m_2(x)} &\sim& r(x)\quad\mbox{as }x\to\infty,
\end{eqnarray}
where $r(x)$ is a differentiable decreasing function such that
the quotient $r'(x)/r^2(x)$ has a limit at infinity.
The last asymptotic equivalence is equivalent to 
\begin{eqnarray*}
\log \frac{1-\varepsilon_-(x)/p}{1+\varepsilon_+(x)/p} &\sim& -r(x)
\quad\mbox{as }x\to\infty.
\end{eqnarray*}
Fix an $\varepsilon>0$. Then for all sufficiently large $x$ we can write 
\[
-(1+\varepsilon) r(x)\ \le\ \log\frac{1-\varepsilon_-(x)/p}{1+\varepsilon_+(x)/p}
\ \le\ -(1-\varepsilon) r(x). 
\] 
Therefore, for such $x$, we have the following upper bound 
\begin{eqnarray*}
h_{x_0}(x) &\le& \frac{1}{p_-(x)}\sum_{u=x}^\infty
\exp\biggl\{-(1-\varepsilon) \sum_{z=x}^u r(z) \biggr\}\\
&\le& \frac{1}{p_-(x)}\int_x^\infty
\exp\biggl\{-(1-\varepsilon) \int_x^u r(z) dz \biggr\}du,
\end{eqnarray*}    
due to the decrease of $r(z)$. Putting 
\[
U_\varepsilon(x)= \int_x^\infty
\exp\biggl\{-(1-\varepsilon) \int_0^u r(z) dz \biggr\}du 
\]
we observe that 
\[
\int_x^\infty \exp\biggl\{-(1-\varepsilon) \int_x^u r(z)dz\biggr\}du 
=\frac{U_\varepsilon(x)}{-U_\varepsilon'(x)}.
\]
By L'Hospital's rule,  
\begin{eqnarray*}
\lim_{x\to\infty}\frac{U_\varepsilon(x)}{-U_\varepsilon'(x)/r(x)} &=&  
\lim_{x\to\infty}\frac{U_\varepsilon'(x)}
{-U_\varepsilon''(x)/r(x)+U_\varepsilon'(x)r'(x)/r^2(x)}\\
&=& \frac{1}{1-\varepsilon+\lim_{x\to\infty}r'(x)/r^2(x)}.
\end{eqnarray*}    
Therefore, 
\begin{eqnarray*}
\limsup_{x\to\infty} h_{x_0}(x)r(x) &\le& 
\frac{1}{p}
\frac{1}{1-\varepsilon+\lim_{x\to\infty}r'(x)/r^2(x)}.
\end{eqnarray*}    
Similarly, starting from inequalities
\begin{eqnarray*}
h_{x_0}(x) &\ge& \frac{1}{p_+(x)}\sum_{u=x}^\infty
\exp\biggl\{-(1+\varepsilon) \sum_{z=x+1}^u r(z) \biggr\}\\
&\ge& \frac{1}{p_+(x)}\int_x^\infty
\exp\biggl\{-(1-\varepsilon) \int_x^u r(z) dz \biggr\}du,
\end{eqnarray*}
we get a lower bound  
\begin{eqnarray*}
\liminf_{x\to\infty} h_{x_0}(x)r(x) &\ge& 
\frac{1}{p}
\frac{1}{1+\varepsilon+\lim_{x\to\infty}r'(x)/r^2(x)}.
\end{eqnarray*}
Since $\varepsilon>0$ is arbitrary we conclude that 
\begin{eqnarray*}
h_{x_0}(x) &\sim& \frac{1}{pr(x)}
\frac{1}{1+\lim_{y\to\infty}r'(y)/r^2(y)}
\quad\mbox{as }x\to\infty.
\end{eqnarray*}

In the following two examples we consider canonical drifts
where $r'(y)/r^2(y)$ has either negative or zero limit at infinity.

\begin{example}\label{ex:nnmc.h.asy.1x}
If $\varepsilon_+(k)\sim \mu_+/k$ and
$\varepsilon_-(k)\sim \mu_-/k$ as $k\to\infty$ and $\mu:=\mu_++\mu_->p$,
then \eqref{assump.r} is valid with $r(x)=\mu/px$,
$r'(x)/r^2(x)\to -p/\mu$, and we deduce that  
\begin{eqnarray*}
h_{x_0}(x) &\sim& \frac{x}{\mu-p}\quad\mbox{as }x\to\infty.
\end{eqnarray*}
\end{example}

\begin{example}\label{ex:nnmc.h.asy.alpha}
If $\varepsilon_+(k)\sim \mu_+/k^\alpha$ and 
$\varepsilon_-(k)\sim \mu_-/k^\alpha$ as $k\to\infty$, 
$\mu:=\mu_++\mu_->0$, $\alpha\in(0,1)$, 
then \eqref{assump.r} is valid with $r(x)=\mu/px^\alpha$,
$r'(x)/r^2(x)\to 0$, and we deduce a Weibullian asymptotics for 
the renewal measure at infinity,
\begin{eqnarray*}
h_{x_0}(x) &\sim& \frac{x^\alpha}{\mu}
\ \sim\ \frac{1}{m_1(x)}\quad\mbox{as }x\to\infty.
\end{eqnarray*}
\end{example}

Let us note that a lower bound in Example \ref{ex:nnmc.h.asy.1x} may be deduced 
from the local limit theorem from Alexander \cite[Theorem 2.4]{Alex11}.

\subsection{Diffusion process}

Now let us consider another Markov process allowing solutions in closed form, 
a transient diffusion $X_t$ on $\R$ 
(or $\R^+$) with the following generator
\begin{align*}
A = \mu(x)\frac{d}{dx}+\frac{\sigma^2(x)}{2}\frac{d^2}{dx^2}.
\end{align*}
We consider a regular diffusion, in the sense of properties (i)-(iii) 
of \cite[Chapter VII.3]{RY1999}. 
For the transience it is sufficient to assume that the following function 
\begin{eqnarray}\label{def:U.dif}
U(x) &:=& \int_x^\infty 
\exp\biggl\{-\int_0^v \frac{2\mu(y)}{\sigma^2(y)} dy\biggr\} dv
\end{eqnarray}
is finite for all $x$. This function solves the homogeneous equation
\begin{eqnarray}\label{eq:U}
AU &=& 0.
\end{eqnarray}
In this case $X_t\to \infty $ a.s.\
and we are interested in the continuous time analogue of the renewal function,
\[
H_y(x,x+h]\ :=\ \int_0^\infty \P_y\{X_t\in(x,x+h]\}dt.
\] 

It is known that the process $f(X_t)-f(X_0)-\int_0^t Af(X_s)ds $ is a local martingale. 
Fix $x$ and $h$.
Suppose we can find a bounded function $f(z)=f_{h,x}(z)$ 
such that $f(z)\to 0$ as $z\to\infty$ and 
\begin{equation}\label{eq:ode.diffusion}
Af(z)= - \I\{z\in (x,x+h]\}.
\end{equation}
Then the optional stopping theorem and a.s.\ 
convergence $X_t\to\infty$ as $t\to\infty$ will give us an equality
\[
f(y)=\E_y f(X_0) = \E_y\biggl[\int_0^{\infty} \I\{X_t\in(x,x+h]\} dt\biggr]
=H_y(x,x+h],
\]
which allows us to analyse $H_y$. 

So, we need to solve the ordinary differential equation~\eqref{eq:ode.diffusion}. 
To this end, consider 
\[
m(x)\ :=\ \int_0^x \frac{2dv}{-U'(v)\sigma^2(v)}\ =\
\int_0^x \frac{2}{\sigma^2(v)}
\exp\biggl\{\int_0^v \frac{2\mu(y)}{\sigma^2(y)} dy\biggr\}dv
\]
and then
\[
G_x(z)\ :=\ 
\begin{cases}
U(z)m(x),&  z\ge x\\
U(z)m(z)+\int_z^x U(v)m(dv),&  z<x.
\end{cases}
\]
We have
\[
\frac{d}{dz}G_x(z)\ =\ 
\begin{cases}
U'(z)m(x),&  z\ge x,\\
U'(z)m(z),&  z< x,
\end{cases}
\]
and
\[
\frac{d^2}{dz^2}G_x(z)\ =\ 
\begin{cases}
U''(z)m(x),&  z\ge x,\\
U''(z)m(z)-2/\sigma^2(z),&  z< x,
\end{cases}
\]
which together with~\eqref{eq:U} implies that
\[
AG_x(z) =
\begin{cases}
-1,& z\le x,\\
0,& z>x, 	
\end{cases}
\]
and hence the function 
\begin{eqnarray}\label{G.dif}
f(z)\ =\ G_{h,x}(z) &:=& G_{x+h}(z)-G_x(z)
\end{eqnarray} 
solves~\eqref{eq:ode.diffusion}. 
Alternatively  one can notice that $U(x)$ corresponds to the scale function and $m(x)$ to the speed measure  and that (see~\cite[Chapter VII, Theorem 3.12]{RY1999}) 
\[
AG_x(z) = \frac{d}{dm(z)}\left(\frac{dG_x(z)}{-dU(z)}\right).
\]

Thus, if follows from~\eqref{G.dif} that for $y<x$,
\begin{eqnarray*}
H_y(x,x+h] &=& \int_x^{x+h} U(v)m(dv)\ =\
\int_x^{x+h}\frac{2 U(v)dv}{-U'(v)\sigma^2(v)}.
\end{eqnarray*}
More formally one can obtain the last equality 
from Corollary~3.8 and Exercise~3.20 in~\cite[Ch. VII.3]{RY1999}.

If the function $W(v):=U(v)/U'(v)\sigma^2(v)$ is long tailed 
at infinity---that is, for any fixed $u$, $W(v+u)\sim W(v)$
as $v\to\infty$---then 
we get the following local renewal theorem for $X_t$ starting at $y$,
\[
H_y(x,x+h]\ \sim \frac{2U(x)}{-U'(x)\sigma^2(x)}h\quad\mbox{as }x\to\infty.
\] 
Assume that
\begin{eqnarray}\label{assump.r.W}
2\mu(x)/\sigma^2(x) &\sim& r(x)\quad\mbox{as }x\to\infty,
\end{eqnarray} 
for some differentiable function $r(x)$ such that the quotient $r'(x)/r^2(x)$
has a limit at infinity. Hence, we can apply L'Hospital's rule to obtain 
\begin{eqnarray*}
\lim_{x\to\infty}\frac{U(x)}{-U'(x)/r(x)} &=& 
\lim_{x\to\infty}\frac{U'(x)}{-U''(x)/r(x)+U'(x)r'(x)/r^2(x)}\\
&=& \frac{1}{1+\lim_{x\to\infty}r'(x)/r^2(x)}.
\end{eqnarray*}
Therefore, for any fixed $h>0$,
\begin{eqnarray*}
H_y(x,x+h] &\sim& \frac{2}{\sigma^2(x)r(x)}
\frac{1}{1+\lim_{y\to\infty}r'(y)/r^2(y)}h\quad\mbox{as }x\to\infty.
\end{eqnarray*}

Note that this asymptotics do not assume existence of the limit of
the variance $\sigma^2(x)$ at infinity, and that happens because of
very specific nature of diffusion processes compared to Markov chains.
In order to get a result for Markov chains with growing second truncated moment 
of jumps, one would definitely need to assume regular growth of that
moments at infinity.
It is also clear that convergence of Markov chains to a stable law 
will play a r\^ole then.
 
Similar to nearest neighbour Markov chains, 
in the following two examples we consider canonical drifts
where $r'(y)/r^2(y)$ has either negative or zero limit at infinity.

\begin{example}\label{ex:dif.h.asy.1x}
If $\mu(x)\sim\mu/x$ and $\sigma^2(x)\to\sigma^2>0$ as $x\to\infty$
with $2\mu>\sigma^2$, then \eqref{assump.r.W} is satisfied with 
$r(x)=2\mu/\sigma^2 x$, $r'(x)/r^2(x)\to -\sigma^2/2\mu$, and we get
\begin{eqnarray*}
H_y(x,x+h] &\sim& \frac{2h}{2\mu-\sigma^2}x\quad\mbox{as }x\to\infty.
\end{eqnarray*}
\end{example}

\begin{example}\label{ex:dif.h.asy.alpha}
If $\mu(x)\sim\mu/x^\alpha$, $\mu>0$, $\alpha\in(0,1)$, 
and $\sigma^2(x)\to\sigma^2>0$ as $x\to\infty$,
then~\eqref{assump.r.W} is satisfied with 
$r(x)=2\mu/\sigma^2 x^\alpha$, $r'(x)/r^2(x)\to 0$, and we get
\begin{eqnarray*}
H_y(x,x+h] &\sim& \frac{h}{\mu}x^\alpha
\ \sim\ \frac{h}{\mu(x)}\quad\mbox{as }x\to\infty.
\end{eqnarray*}
\end{example}

\section{Preliminary bounds for renewal measure on growing intervals}\label{sec:ilrtgi}

Let $h(x)$ be an unboundedly growing function.
This section is mostly devoted to the construction of functions $G^*_{h,x}(y)$
and $G^{**}_{h,x}(y)$ such that the processes $G^*_{h,x}(X_n)$ 
and $G^{**}_{h,x}(X_n)$ have drifts, roughly speaking,
not less and not greater than the limiting jump variance times 
$\I\{y\in[x,x+h(x)]\}$ respectively. That allows us to conclude 
upper and lower bounds for the renewal measure on the interval $[x,x+h(x)]$
of growing length.

Let $r(x)$ be a decreasing differentiable function on $[0,\infty)$
satisfying the condition
\begin{eqnarray}\label{r.prime}
r'(x) &=& O(r^2(x))\quad\mbox{as }x\to\infty,
\end{eqnarray}
in the sequel $r(x)$ approximates the quotient 
$2m_1^{[s(x)]}(x)/m_2^{[s(x)]}(x)$. 
We shall impose assumptions on the truncated moments of Markov chains,
and doing that we always assume that the truncation function $s(x)$ 
increases and satisfies 
\[
s(x)=o\left(1/r(x)\right)\quad\mbox{as }x\to\infty.
\]
Define
\begin{align}\label{eq_u_x}
R(z)&:=\int_0^z r(y) dy,\qquad
U(x)\ :=\ \int_x^\infty e^{-R(z)} dz,
\end{align}
where $U(x)$ is assumed finite,
compare to $U$ defined in \eqref{def:U.dif}. Clearly, 
\begin{equation*}
\frac{U''(y)}{U'(y)} = -r(y).
\end{equation*}
Let us fix an increasing function $s(x)$ of order $o(1/r(x))$ as $x\to\infty$.
Due to \eqref{r.prime},
\begin{equation}\label{eq.r.insensitivity}
r(x+y)\sim r(x),\quad R(x+y)-R(x)\to 0
\quad\text{and}\quad e^{-R(x+y)}\sim e^{-R(x)}
\end{equation}
as $x\to\infty$ uniformly for $|y|\le s(x)$. Also,
\begin{equation}\label{U3}
U'''(x)=(r^2(x)-r'(x))e^{-R(x)}=O\bigl(r^2(x)e^{-R(x)}\bigr) 
\end{equation}
and, consequently, 
\begin{eqnarray}\label{U.3.uni}
U'''(x+y) &=& O\bigl(r^2(x)e^{-R(x)}\bigr)\quad\mbox{as }x\to\infty
\mbox{ uniformly for }|y|\le s(x).
\end{eqnarray}

Let 
$$
G(y)\ :=\ U(0)-U(y)\ =\ \int_0^y e^{-R(z)} dz.
$$
We start with a result showing that $G(X_n)$
is almost a martingale provided the quotient $2m_1^{[s(x)]}(x)/m_2^{[s(x)]}(x)$
is asymptotically proportional to $r(x)$. 
 
\begin{lemma}\label{lem:upper.U}
Let $\theta(y)$ be a non-negative bounded function. Let
\begin{eqnarray}\label{sigma_uniform2}
\E\{|\xi(y)|^3;\ |\xi(y)|\le s(y)\} &=&
o\bigl(m_2^{[s(y)]}(y)\theta(y)/r(y)\bigr)\quad\mbox{as }y\to\infty.
\end{eqnarray} 
{\rm (i)} If
\begin{eqnarray}\label{cond.tail.left}
\P\{\xi(y)<-s(y)\} &=& 0\quad\mbox{for all }y\ge 0,
\end{eqnarray}
and
\begin{equation}\label{m1_below}
\frac{2m_1^{[s(y)]}(y)}{m_2^{[s(y)]}(y)} \ge (1+\theta(y))r(y) 
\quad\mbox{for all sufficiently large }y,
\end{equation}  
then there exists a $y^*>0$ such that 
\begin{eqnarray*}	
\E\{G(y+\xi(y))-G(y);\ \xi(y)\le s(y)\} &\ge& 0\quad\mbox{for all }y>y^*.
\end{eqnarray*}
{\rm (ii)} If
\begin{eqnarray}\label{cond.tail.right}
\P\{\xi(y)>s(y)\} &=& 0\quad\mbox{for all }y\ge 0,
\end{eqnarray}
and
\begin{equation}\label{m1_above}
\frac{2m_1^{[s(y)]}(y)}{m_2^{[s(y)]}(y)} \le (1-\theta(y))r(y) 
\quad\mbox{for all sufficiently large }y,
\end{equation}  
then there exists a $y^*>0$ such that 
\begin{eqnarray*}	
\E\{G(y+\xi(y))-G(y);\ \xi(y)\ge -s(y)\} &\le& 0\quad\mbox{for all }y>y^*.
\end{eqnarray*}
\end{lemma}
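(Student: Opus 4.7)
Both parts rest on a third--order Taylor expansion of $G$ combined with the derivative estimates already at hand. Since $G(y)=U(0)-U(y)$, we have $G'(y)=e^{-R(y)}>0$, $G''(y)=-r(y)\,G'(y)$, and $G'''=-U'''$; by \eqref{U.3.uni} together with \eqref{eq.r.insensitivity},
$$
\sup_{|\eta-y|\le s(y)}|G'''(\eta)|\;=\;O\bigl(r^2(y)\,e^{-R(y)}\bigr)\quad\text{as }y\to\infty.
$$
For any realisation with $|\xi|\le s(y)$, Taylor's theorem with Lagrange remainder gives
$$
G(y+\xi)-G(y)\;=\;G'(y)\,\xi+\tfrac12 G''(y)\,\xi^2+\tfrac16 G'''(\eta)\,\xi^3,\qquad \eta\in[y-s(y),y+s(y)].
$$

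For part (i), hypothesis \eqref{cond.tail.left} makes $\{\xi(y)\le s(y)\}=\{|\xi(y)|\le s(y)\}$, so taking expectation on that event gives
$$
\E\{G(y+\xi(y))-G(y);\,\xi(y)\le s(y)\}\;=\;G'(y)\Bigl[m_1^{[s(y)]}(y)-\tfrac{r(y)}{2}\,m_2^{[s(y)]}(y)\Bigr]+\mathcal R(y),
$$
where $|\mathcal R(y)|\le \tfrac16\sup_{|\eta-y|\le s(y)}|G'''(\eta)|\cdot\E\{|\xi(y)|^3;|\xi(y)|\le s(y)\}$. Assumption \eqref{m1_below} rewrites as $m_1^{[s(y)]}(y)-\tfrac{r(y)}{2}m_2^{[s(y)]}(y)\ge \tfrac{\theta(y)}{2}r(y)m_2^{[s(y)]}(y)$, so the main term is bounded below by $\tfrac12\,e^{-R(y)}\,r(y)\,m_2^{[s(y)]}(y)\,\theta(y)$. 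Combining the derivative bound with \eqref{sigma_uniform2},
$$
|\mathcal R(y)|\;=\;O\bigl(r^2(y)e^{-R(y)}\bigr)\cdot o\!\left(\frac{m_2^{[s(y)]}(y)\theta(y)}{r(y)}\right)\;=\;o\bigl(r(y)\,e^{-R(y)}\,m_2^{[s(y)]}(y)\,\theta(y)\bigr),
$$
so the main positive term dominates for all sufficiently large $y$, proving (i). Part (ii) is mirror--symmetric: hypothesis \eqref{cond.tail.right} again identifies $\{\xi(y)\ge -s(y)\}$ with $\{|\xi(y)|\le s(y)\}$, assumption \eqref{m1_above} reverses the sign so that the bracket is bounded above by $-\tfrac{\theta(y)}{2}\,r(y)\,m_2^{[s(y)]}(y)$, and the remainder estimate is identical.

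The one delicate point is the uniform control of $G'''$ over the whole window $[y-s(y),y+s(y)]$, which is what lets us factor it out of the remainder and compare the result to the main term on the same scale $r(y)e^{-R(y)}m_2^{[s(y)]}(y)$. This is precisely why the scale $s(y)=o(1/r(y))$ and the insensitivity \eqref{eq.r.insensitivity} were set up in advance, and why the third--moment hypothesis \eqref{sigma_uniform2} is phrased as an $o(\cdot/r(y))$ estimate rather than as a plain moment bound: any weaker assumption would fail to match the $r^2 e^{-R}$ scale of $G'''$ and the argument would not close.
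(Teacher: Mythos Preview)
Your proof is correct and follows essentially the same route as the paper: a third--order Taylor expansion of $G$ on the event $\{|\xi(y)|\le s(y)\}$, with the uniform bound on $G'''$ from \eqref{U.3.uni}--\eqref{eq.r.insensitivity} controlling the remainder and \eqref{m1_below}/\eqref{m1_above} giving the sign of the main term. Your observation that \eqref{cond.tail.left} identifies $\{\xi(y)\le s(y)\}$ with $\{|\xi(y)|\le s(y)\}$ is in fact slightly more direct than the paper's presentation, which routes through the full expectation via monotonicity of $G$.
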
	

\begin{proof}
(i) Since the function $G(y)$ is increasing,
\begin{eqnarray*}
\E G(y+\xi(y))-G(y)
&\ge& \E\{G(y+\xi(y))-G(y);\ |\xi(y)|\le s(y)\},
\end{eqnarray*}
due to the condition \eqref{cond.tail.left}.
Since $G'(y)=e^{-R(y)}$, $G''(y)=-r(y)e^{-R(y)}$, and
$G'''(y+z)=O(r^2(y))e^{-R(y)}$ as $y\to\infty$ uniformly for all $|z|\le s(y)$
due to the upper bound \eqref{U.3.uni} on $U'''$ and \eqref{eq.r.insensitivity},
application of Taylor's expansion up to the third derivative yields that,
for some $\gamma=\gamma(x,\xi(x))\in[0,1]$,
\begin{eqnarray*}
\lefteqn{\E\{G(y+\xi(y))-G(y);\ |\xi(y)|\le s(y)\}}\\
&=& m_1^{[s(y)]}(y)G'(y)+\frac12 m_2^{[s(y)]}(y)G''(y)\\
&&\hspace{30mm}+ \frac16 \E\{\xi^3(y)G'''(y+\gamma\xi(y));\ |\xi(y)|\le s(y)\}\\
&=& m_1^{[s(y)]}(y)e^{-R(y)}
-\frac12 m_2^{[s(y)]}(y)r(y)e^{-R(y)}\\
&&\hspace{30mm}+ O\Bigl(r^2(y)e^{-R(y)} 
\E\{|\xi^3(y)|;\ |\xi(y)|\le s(y)\}\Bigr)\quad\mbox{as }y\to\infty.
\end{eqnarray*}
The sum of the first two terms on the right hand side equals
\begin{eqnarray*}
\frac12 e^{-R(y)}\bigl(2m_1^{[s(y)]}(y)-m_2^{[s(y)]}(y) r(y)\bigr)
&\ge& \frac12 e^{-R(y)} m_2^{[s(y)]}(y)\theta(y)r(y),
\end{eqnarray*}
due to the condition \eqref{m1_below}. 
The third term on the right hand side of the previous equation
is of order $o\bigl(m_2^{[s(y)]}(y)\theta(y)r(y)e^{-R(y)}\bigr)$
owing to the condition \eqref{sigma_uniform2}.
These observations conclude the proof of (i).

(ii) Since the function $G(y)$ is increasing,
\begin{eqnarray*}
\E G(y+\xi(y))-G(y)
&\le& \E\{G(y+\xi(y))-G(y);\ |\xi(y)|\le s(y)\},
\end{eqnarray*}
due to the condition \eqref{cond.tail.right}.
The rest of the proof is very similar to part (i).
\end{proof}

\subsection{Upper bound}
\label{sec:upper}

Our derivation of an upper bound for the renewal measure of $X_n$ is based 
on the Lyapunov function $G^{**}_{h,x}(y)$ defined below in \eqref{eq_gh}.

For any $x$ and $h>0$, consider a piecewise differentiable function
\begin{equation}\label{eq_first_derivative_gh}
g^{**}_{h,x}(y) := 
\begin{cases}
0,& y\le x,\\
2(y-x),& y\in (x,x+h],\\
2h,& y\in (x+h,x+h+s(x+h)],\\
2he^{R(x+h+s(x+h))-R(y)},& y>x+h+s(x+h),
\end{cases}
\end{equation}
whose derivative satisfies
\begin{equation}\label{eq_first_derivative_gh.der}
g_{h,x}^{**\prime}(y)\ =\ 2\I\{y\in[x,x+h]\}\quad\mbox{for all }y< x+h+s(x+h),
\ y\not= x,\ x+h.
\end{equation}
Its integral---the function which originates from
the key function \eqref{G.dif} for diffusion processes,
\begin{equation}\label{eq_gh}
G^{**}_{h,x}(y) := \int_0^y g^{**}_{h,x}(z)dz,
\end{equation}
is an increasing bounded function, $G^{**}_{h,x}(\infty)<\infty$, because 
\begin{equation}\label{g**.le.G}
g^{**}_{h,x}(y)\ \le\ 2he^{R(x+h+s(x+h))-R(y)}\quad\mbox{for all }y,
\end{equation}
and hence,
\begin{eqnarray}\label{G.upper.at.infty}
G^{**}_{h,x}(\infty) &\le& 2h \int_x^\infty e^{R(x+h+s(x+h))-R(y)}dy\nonumber\\
&=& 2h e^{R(x+h+s(x+h))}U(x)\nonumber\\
&\le& 2h U(x) e^{R(x+h)+r(x+h)s(x+h)},
\end{eqnarray}
because $R$ is concave. As $s(x)=o(1/r(x))$, 
\begin{eqnarray}\label{G.upper.at.infty.ap}
G^{**}_{h,x}(\infty) &\le& 2h U(x) e^{R(x+h)+o(1)}\nonumber\\
&\le& 2h U(x) e^{R(x)+o(1)}\quad\mbox{as }x\to\infty,
\end{eqnarray}
for $h\le s(x)$, due to \eqref{eq.r.insensitivity}.

The function $G^{**}_{h,x}(y)$ is convex for $y\le x+h$.
For $y>x+h$, the function $G^{**}_{h,x}(y)$ increases
in a concave way with slope $2h$ at point $x+h$. 
Notice that, for $y>x+h+s(x+h)$ and $z>0$,
\begin{eqnarray*}
G^{**}_{h,x}(y+z)-G^{**}_{h,x}(y) &=& 2he^{R(x+h+s(x+h))}(G(y+z)-G(y))
\end{eqnarray*}
and, due to \eqref{g**.le.G}, for $y>x+h+s(x+h)$ and $z\le 0$, 
\begin{eqnarray*}
G^{**}_{h,x}(y+z)-G^{**}_{h,x}(y) &\ge& 2he^{R(x+h+s(x+h))}(G(y+z)-G(y)).
\end{eqnarray*}
Therefore, for all $y>x+h+s(x+h)$ and $z\in\R$
\begin{eqnarray}\label{lower.G.1}
G^{**}_{h,x}(y+z)-G^{**}_{h,x}(y) &\ge& 2he^{R(x+h+s(x+h))}(G(y+z)-G(y)).
\end{eqnarray}
Further, for $y\in(x+h,x+h+s(x+h)]$,
\begin{equation*}
g^{**}_{h,x}(y+z)\ \ge\ 2he^{R(y)-R(y+z)}\quad\mbox{for }z>0,
\end{equation*}
and
\begin{equation*}
g^{**}_{h,x}(y+z)\ \le\ 2he^{R(y)-R(y+z)}\quad\mbox{for }z\le 0.
\end{equation*}
Therefore, for $y\in(x+h,x+h+s(x+h)]$, 
\begin{eqnarray}\label{lower.G.2}
G^{**}_{h,x}(y+z)-G^{**}_{h,x}(y) &\ge& 2he^{R(y)}(G(y+z)-G(y)).
\end{eqnarray}

\begin{lemma}\label{lem:upper}
Assume that the conditions \eqref{sigma_uniform2}--\eqref{m1_below} hold.
Then there exists an $x^*>0$ such that, 
for all $x>x^*$, $y\in\R$, $h\le s(x)$, and $t\in(0,h/2)$,
\begin{eqnarray}\label{eq_gh_super}	
\E G^{**}_{h,x}(y+\xi(y))-G^{**}_{h,x}(y)
&\ge& m_2^{[t]}(y)\I\{y\in [x+t,x+h-t]\}.
\end{eqnarray}
\end{lemma}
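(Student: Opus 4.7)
The plan is to estimate the one-step expected change of $G^{**}_{h,x}$ by exploiting its piecewise structure: it vanishes on $(-\infty, x]$, equals $(y-x)^2$ on $[x, x+h]$, is linear with slope $2h$ on $[x+h, x+h+s(x+h)]$, and is concave (tending to $G^{**}_{h,x}(\infty)<\infty$) beyond. Two global properties drive the proof: $G^{**}_{h,x}$ is non-decreasing everywhere, and $G^{**}_{h,x}$ is convex on the whole interval $(-\infty, x+h+s(x+h)]$.

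For the main region $y\in[x+t,x+h-t]$, I split according to $|\xi(y)|\le t$ or not. On $\{|\xi(y)|\le t\}$, both $y$ and $y+\xi(y)$ lie in $[x,x+h]$, so
\[
G^{**}_{h,x}(y+\xi(y))-G^{**}_{h,x}(y)=2(y-x)\xi(y)+\xi^2(y)
\]
exactly, which integrates to $2(y-x)m_1^{[t]}(y)+m_2^{[t]}(y)$. On $\{t<|\xi(y)|\le s(y)\}$ the inequalities $y\le x+h-t$ and $s(y)\le s(x+h)$ force $y+\xi(y)\le x+h+s(x+h)$, so by convexity the tangent-line inequality at $y$ yields $G^{**}_{h,x}(y+\xi(y))-G^{**}_{h,x}(y)\ge 2(y-x)\xi(y)$. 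On $\{\xi(y)>s(y)\}$ monotonicity alone gives a non-negative contribution, while the event $\{\xi(y)<-s(y)\}$ is void by \eqref{cond.tail.left}. Aggregating,
\[
\E[G^{**}_{h,x}(y+\xi(y))-G^{**}_{h,x}(y)]\ \ge\ 2(y-x)\,m_1^{[s(y)]}(y)+m_2^{[t]}(y),
\]
and $m_1^{[s(y)]}(y)>0$ for all sufficiently large $y$ by \eqref{m1_below}, giving the required lower bound $m_2^{[t]}(y)$ once $x^*$ is chosen large enough.

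For the remaining ranges of $y$ only non-negativity is needed. When $y\le x$, $G^{**}_{h,x}(y)=0$ and $G^{**}_{h,x}\ge 0$, so the inequality is immediate. When $y\in(x,x+t)\cup(x+h-t,x+h]$ the same tangent-plus-monotonicity argument as above (dropping the exact quadratic identity, which no longer applies near the edges) gives $\E[\cdot]\ge 2(y-x)m_1^{[s(y)]}(y)\ge 0$. For $y\in(x+h,x+h+s(x+h)]$ I insert \eqref{lower.G.2} to get
\[
\E[G^{**}_{h,x}(y+\xi(y))-G^{**}_{h,x}(y)]\ \ge\ 2he^{R(y)}\,\E[G(y+\xi(y))-G(y)],
\]
and the right-hand side is non-negative by Lemma~\ref{lem:upper.U}(i)---whose hypotheses \eqref{sigma_uniform2}, \eqref{cond.tail.left}, \eqref{m1_below} are exactly in force---together with monotonicity of $G$ absorbing the $\xi(y)>s(y)$ tail. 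For $y>x+h+s(x+h)$ the same reduction via \eqref{lower.G.1} closes the last case.

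The delicate point, and the only real obstacle, is controlling jumps that shoot the chain out of the quadratic region $[x,x+h]$ into the linear regime $(x+h,x+h+s(x+h)]$. What rescues the argument is the fact that $G^{**}_{h,x}$ is convex on the larger interval $(-\infty,x+h+s(x+h)]$, not just on $[x,x+h]$: the slope at the joint $x+h$ is $2h$, matching the linear piece. Thus the tangent at $y$ remains a legitimate lower bound over the full range of $y+\xi(y)$ for $|\xi(y)|\le s(y)$, and the quadratic remainder $\xi^2(y)$ is collected cleanly from the $|\xi(y)|\le t$ contribution, producing exactly $m_2^{[t]}(y)$ with no loss at the interface.
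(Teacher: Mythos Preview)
Your proof is correct and follows essentially the same route as the paper. The paper writes the expansion as Taylor with integral remainder, $E=m_1^{[s(y)]}(y)g^{**}_{h,x}(y)+\E\bigl\{\int_y^{y+\xi(y)}g^{**\prime}_{h,x}(z)(y+\xi(y)-z)\,dz;\ |\xi(y)|\le s(y)\bigr\}$, and then uses $g^{**\prime}_{h,x}\ge 0$ on $[0,x+h+s(x+h)]$ to drop all but the $|\xi(y)|\le t$ part of the remainder; your tangent-line inequality is precisely this convexity observation, and your exact quadratic on $\{|\xi(y)|\le t\}$ is precisely the remaining integral evaluated where $g^{**\prime}_{h,x}=2$. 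The reductions for $y\le x$ and for $y>x+h$ via \eqref{lower.G.1}, \eqref{lower.G.2} and Lemma~\ref{lem:upper.U}(i) are identical to the paper's.
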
	

\begin{proof}
Since the function $G^{**}_{h,x}(y)$ is zero for $y\le x$ and positive for $y>x$, 
the mean drift of $G^{**}_{h,x}$ is non-negative for all $y\le x$ and  
the inequality \eqref{eq_gh_super} follows for this range of $y$. 

Since $G^{**}_{h,x}(y)$ is increasing and due to \eqref{cond.tail.left},
\begin{eqnarray*}
\E G^{**}_{h,x}(y+\xi(y))-G^{**}_{h,x}(y)
&\ge& \E\{G^{**}_{h,x}(y+\xi(y))-G^{**}_{h,x}(y);\ |\xi(y)|\le s(y)\}
\ =:\ E.
\end{eqnarray*}
Positiveness of $E$ for $y>x+h$ follows from \eqref{lower.G.1} and
\eqref{lower.G.2}, by Lemma \ref{lem:upper.U}.

Thus, it remains to estimate $E$ from below for $y\in[x,x+h]$.
By Taylor's expansion for $G^{**}_{h,x}$ with integral remainder term,
\begin{eqnarray}\label{E.Taylor}	
E &=& m_1^{[s(y)]}(y)g^{**}_{h,x}(y)
+\E\Bigl\{\int_y^{y+\xi(y)}g^{**\prime}_{h,x}(z)(y+\xi(y)-z)dz;\ |\xi(y)|\le s(y)\Bigr\}.
\nonumber\\[-2mm]
\end{eqnarray}
Since $g^{**}_{h,x}(z)\ge 0$ and $g_{h,x}^{**\prime}(z)\ge 0$ 
for all $z\in[0,x+h+s(x+h)]$,
we obtain for all sufficiently large $x$ and $y\in[x,x+h]$
\begin{eqnarray*}	
    E &\ge& \E\Bigl\{\int_y^{y+\xi(y)}g^{**\prime}_{h,x}(z)(y+\xi(y)-z)dz;\ |\xi(y)|\le t\Bigr\}\\
&\ge& 2\I\{y\in [x+t,x+h-t]\}
\E\Bigl\{\int_y^{y+\xi(y)}(y+\xi(y)-z)dz;\ |\xi(y)|\le t\Bigr\}\\
&=& m_2^{[t]}(y)\I\{y\in [x+t,x+h-t]\},
\end{eqnarray*}
because $g_{h,x}^{**\prime}(z)=2$ for all $z\in(x,x+h]$ which concludes the proof.
\end{proof}

\begin{proposition}\label{thm:renewal.ub}
Assume that conditions of Lemma~\ref{lem:upper} hold. 
Then there exists an $x^*>0$ such that, 
for all $x>x^*$, $h\le s(x)$, and $t\in(0,h/2)$,
\begin{eqnarray*}
H(x+t,x+h-t] &\le& \frac{G^{**}_{h,x}(\infty) - \E G^{**}_{h,x}(X_0)}
{\min_{y\in[x+t,x+h-t]} m_2^{[t]}(y)}.
\end{eqnarray*}
\end{proposition}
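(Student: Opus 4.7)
The plan is to read Lemma~\ref{lem:upper} as the statement that the compensated process
\begin{equation*}
M_n\ :=\ G^{**}_{h,x}(X_n)-\sum_{k=0}^{n-1} m_2^{[t]}(X_k)\,\I\{X_k\in[x+t,x+h-t]\}
\end{equation*}
is a submartingale with respect to the natural filtration of $X_n$ once $x>x^*$. Since $G^{**}_{h,x}$ is non-negative and bounded above by $G^{**}_{h,x}(\infty)<\infty$ (by~\eqref{G.upper.at.infty}), there are no integrability issues and one may simply take expectations without invoking any optional stopping argument.

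First I would take expectations and rearrange $\E M_n\ge\E M_0$ to obtain
\begin{equation*}
\sum_{k=0}^{n-1}\E\bigl[m_2^{[t]}(X_k)\,\I\{X_k\in[x+t,x+h-t]\}\bigr]\ \le\ \E G^{**}_{h,x}(X_n)-\E G^{**}_{h,x}(X_0)\ \le\ G^{**}_{h,x}(\infty)-\E G^{**}_{h,x}(X_0).
\end{equation*}
Next, wherever the indicator is non-zero one has $m_2^{[t]}(X_k)\ge\min_{y\in[x+t,x+h-t]}m_2^{[t]}(y)$, so this constant may be pulled out of the sum. Monotone convergence then lets me pass $n\to\infty$ on the left-hand side, producing the renewal measure
\begin{equation*}
H(x+t,x+h-t]\ =\ \sum_{k=0}^{\infty}\P\{X_k\in[x+t,x+h-t]\},
\end{equation*}
and dividing through gives exactly the stated inequality.

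There is no essential obstacle here: the whole proof is a direct, almost mechanical consequence of Lemma~\ref{lem:upper} together with the finiteness bound~\eqref{G.upper.at.infty}. The genuine work has already been done in engineering $G^{**}_{h,x}$ so that its drift is non-negative on $\R\setminus[x+t,x+h-t]$ but contributes at least $m_2^{[t]}(y)$ on the inner interval $[x+t,x+h-t]$; once that Lyapunov function is in hand, the passage from it to an upper bound on the renewal measure is a standard submartingale-plus-boundedness argument.
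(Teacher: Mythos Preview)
Your proof is correct and is essentially the same argument as the paper's; the only difference is cosmetic. The paper writes the telescoping decomposition $G^{**}_{h,x}(X_n)=G^{**}_{h,x}(X_0)+\sum_{k=0}^{n-1}(G^{**}_{h,x}(X_{k+1})-G^{**}_{h,x}(X_k))$, takes expectations, applies Lemma~\ref{lem:upper} to each increment, bounds $\E G^{**}_{h,x}(X_n)\le G^{**}_{h,x}(\infty)$, pulls out the minimum of $m_2^{[t]}$, and lets $n\to\infty$---exactly your steps, just without naming $M_n$ a submartingale.
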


\begin{proof}
Consider the following decomposition 
$$
G^{**}_{h,x}(X_n)
=\sum_{k=0}^{n-1} (G^{**}_{h,x}(X_{k+1})-G^{**}_{h,x}(X_k))+ G^{**}_{h,x}(X_0).
$$
Since $G^{**}_{h,x}(y)$ is bounded by $G^{**}_{h,x}(\infty)$, we obtain 
\begin{eqnarray*}
G^{**}_{h,x}(\infty) &\ge& \E G^{**}_{h,x}(X_n)\\
&=& \E G^{**}_{h,x}(X_0)+\sum_{k=0}^{n-1} 
\E [G^{**}_{h,x}(X_{k+1})-G^{**}_{h,x}(X_k)]\\
&\ge& \E G^{**}_{h,x}(X_0)+\sum_{k=0}^{n-1} 
\E \{m_2^{[t]}(X_k);X_k\in(x+t,x+h-t]\},
\end{eqnarray*}
for $x>x_*$, by Lemma \ref{lem:upper}. Hence, for any $n$, 
\begin{eqnarray*}
\sum_{k=0}^{n-1} \P\{X_k \in (x+t,x+h-t]\}
&\le& \frac{G^{**}_{h,x}(\infty)-\E G^{**}_{h,x}(X_0)}
{\min_{y\in[x+t,x+h-t]}m_2^{[t]}(y)}.
\end{eqnarray*}
Letting $n$ to infinity we arrive at the conclusion. 
\end{proof}

\subsection{Lower bound}
\label{sec:lower}

We now turn to an accompanying lower bound for the renewal measure.
To this end we consider a differentiable function
\begin{equation}\label{eq_first_derivative_gh*}
g^*_{h,x}(y) := 
\begin{cases}
0,& y\le x,\\
2(y-x),& y\in (x,x+h],\\
2he^{R(x+h)-R(y)},& y>x+h,
\end{cases}
\end{equation}
whose derivative satisfies
\begin{equation}\label{eq_first_derivative_gh.der*}
g_{h,x}^{*\prime}(y)\ \le\ 2\I\{y\in[x,x+h]\}\quad\mbox{for all }y\ge 0.
\end{equation}
Its integral---which similar to \eqref{eq_gh} originates from
the key function \eqref{G.dif} for diffusion processes,
\begin{equation}\label{eq_gh*}
G^*_{h,x}(y) := \int_0^y g^*_{h,x}(z)dz,
\end{equation}
is an increasing bounded function, $G^*_{h,x}(\infty)<\infty$, and
\begin{eqnarray}\label{G.upper.at.infty*}
G^*_{h,x}(\infty) &=& h^2+2h e^{R(x+h)}U(x+h)\nonumber\\
&\ge& 2h e^{R(x)}U(x+h).
\end{eqnarray}
For $h\le s(x)=o(1/r(x))$,
\begin{eqnarray}\label{G.upper.at.infty.ap*}
G^*_{h,x}(\infty) &\ge& (2+o(1))h e^{R(x)}U(x)\quad\mbox{as }x\to\infty.
\end{eqnarray}

Also define a concave function
\begin{equation}\label{eq_gh**}
G^{*<}_{h,x}(y) := h^2+2he^{R(x+h)}\int_{x+h}^y e^{-R(z)}dz.
\end{equation}
Observe the inequality
\begin{equation}\label{G*ge**}
G^*_{h,x}(y) \ge G^{*<}_{h,x}(y)\quad\mbox{for all }y\le x+h,
\end{equation}
and equality
\begin{equation}\label{G*=**}
G^*_{h,x}(y) = G^{*<}_{h,x}(y)\quad\mbox{for all }y>x+h,
\end{equation}
Hence, for $y>x+h$ and $z>0$,
\begin{eqnarray}\label{lower.G.1*}
G^*_{h,x}(y-z)-G^{*<}_{h,x}(y-z) &\le& 
G^*_{h,x}(y)-G^{*<}_{h,x}(y-z)\nonumber\\
&=& G^{*<}_{h,x}(y)-G^{*<}_{h,x}(y-z)\nonumber\\
&=& 2he^{R(x+h)}(G(y)-G(y-z)).
\end{eqnarray}

\begin{lemma}\label{thm:renewal.lb}
Assume that the conditions \eqref{sigma_uniform2},
\eqref{cond.tail.right} and \eqref{m1_above} hold. 
Then there exists an $x^*>0$ such that,
for all $x>x^*$, $y\ge 0$, $h\le s(x)$, and $t\in(0,h/2)$,
\begin{multline*}
\E G^*_{h,x}(y+\xi(y))-G^*_{h,x}(y)\\
\le \begin{cases}
0,& y\le x-s(x),\\
2h \E\{\xi(y);\xi(y)\in(x-y, s(y))\},& y\in(x-s(x),x-t],\\
(1+hr(y)) m_2^{[s(y)]}(y),& y\in(x-t,x+h+t],\\
3h\E\{|\xi(y)|; -s(y)<\xi(y)<x+h-y\},& y>x+h+t.
\end{cases}	
\end{multline*}
\end{lemma}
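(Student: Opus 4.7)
I will argue case-by-case in the four regions of $y$, exploiting the piecewise structure of $G^*_{h,x}$: it vanishes on $(-\infty,x]$, satisfies $g^*_{h,x}\le 2h$ everywhere, and on $(x+h,\infty)$ coincides with the concave function $G^{*<}_{h,x}$ which, up to the additive constant $h^2$, is an increasing scalar multiple of $G$. I will also use repeatedly the crude bound $G^*_{h,x}(z)\le 2h(z-x)^+$, obtained by integrating $g^*_{h,x}\le 2h$, and the monotonicity of $G^*_{h,x}$.

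For $y\le x-s(x)$, the truncation \eqref{cond.tail.right} combined with the monotonicity of $s(\cdot)$ gives $y+\xi(y)\le y+s(y)\le x$ a.s., so both values of $G^*_{h,x}$ vanish and the drift equals $0$. For $y\in(x-s(x),x-t]$, $G^*_{h,x}(y)=0$ and $G^*_{h,x}(y+\xi(y))\le 2h(y+\xi(y)-x)^+\le 2h\,\xi(y)\,\I\{\xi(y)>x-y\}$; taking expectation and using \eqref{cond.tail.right} yields the announced bound.

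For $y\in(x-t,x+h+t]$, I split the expectation according to whether $\xi(y)\ge -s(y)$ or not. On the complement the monotonicity of $G^*_{h,x}$ forces a non-positive contribution. On $\{|\xi(y)|\le s(y)\}$, Taylor's formula with integral remainder gives
\[
G^*_{h,x}(y+\xi(y))-G^*_{h,x}(y)=\xi(y)g^*_{h,x}(y)+\int_y^{y+\xi(y)}g^{*\prime}_{h,x}(z)(y+\xi(y)-z)\,dz.
\]
The first summand is bounded by $2h m_1^{[s(y)]}(y)\le h r(y)m_2^{[s(y)]}(y)$ thanks to $g^*_{h,x}\le 2h$ and \eqref{m1_above}; inequality \eqref{eq_first_derivative_gh.der*} ensures that the integrand is at most $2\bigl|y+\xi(y)-z\bigr|$ on $[x,x+h]$ and non-positive off this set, so the remainder is bounded by $\xi(y)^2$ in either direction of integration. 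Taking expectation contributes $m_2^{[s(y)]}(y)$ and the total is $(1+hr(y))m_2^{[s(y)]}(y)$.

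The delicate range is $y>x+h+t$, which I handle via the decomposition
\[
\E G^*_{h,x}(y+\xi(y))-G^*_{h,x}(y)=\E[G^{*<}_{h,x}(y+\xi(y))-G^{*<}_{h,x}(y)]+\E[G^*_{h,x}(y+\xi(y))-G^{*<}_{h,x}(y+\xi(y))],
\]
based on $G^*_{h,x}(y)=G^{*<}_{h,x}(y)$. Since $G^{*<}_{h,x}$ is an increasing affine function of $G$, Lemma~\ref{lem:upper.U}(ii) together with \eqref{m1_above} gives $\E[G^{*<}_{h,x}(y+\xi(y))-G^{*<}_{h,x}(y);|\xi(y)|\le s(y)]\le 0$ for large $y$, and the same difference is non-positive on $\{\xi(y)<-s(y)\}$ by monotonicity of $G^{*<}_{h,x}$. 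The second summand is supported on $\{\xi(y)<x+h-y\}$. On $\{-s(y)<\xi(y)<x+h-y\}$, \eqref{lower.G.1*} together with $G(y)-G(y+\xi(y))\le|\xi(y)|e^{-R(y+\xi(y))}$ yields
\[
G^*_{h,x}(y+\xi(y))-G^{*<}_{h,x}(y+\xi(y))\le 2h e^{R(x+h)-R(y+\xi(y))}|\xi(y)|,
\]
and the exponential factor is at most $3/2$ for large $y$ because $R(x+h)-R(y+\xi(y))\le r(y+\xi(y))s(y)$ tends to $0$ uniformly on this event, thanks to $s(x)=o(1/r(x))$ and \eqref{eq.r.insensitivity}. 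On $\{\xi(y)\le -s(y)\}$, the same \eqref{lower.G.1*} yields $G^*_{h,x}(y+\xi(y))-G^{*<}_{h,x}(y+\xi(y))\le G^{*<}_{h,x}(y)-G^{*<}_{h,x}(y+\xi(y))$, so this piece added to the corresponding restriction of the first summand telescopes to a non-positive quantity. The \emph{main obstacle} is exactly this last case: engineering the cancellation of the $G^*_{h,x}-G^{*<}_{h,x}$ overshoot against the $G^{*<}_{h,x}$ drift on the large negative jumps, while simultaneously securing a uniform exponential bound on the window $\xi(y)\in(-s(y),x+h-y)$; both rely crucially on the calibration $s(x)=o(1/r(x))$.
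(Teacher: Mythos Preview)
Your proof is correct and follows essentially the same route as the paper: the crude bound $G^*_{h,x}(z)\le 2h(z-x)^+$ for the first two regions, Taylor with integral remainder and \eqref{eq_first_derivative_gh.der*} for the middle region, and the $G^{*<}_{h,x}$-decomposition together with Lemma~\ref{lem:upper.U}(ii) for $y>x+h+t$.

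The one stylistic difference worth noting: the paper opens by restricting once and for all to $\{|\xi(y)|\le s(y)\}$, using monotonicity of $G^*_{h,x}$ and \eqref{cond.tail.right}; this makes the event $\{\xi(y)<-s(y)\}$ disappear from every subsequent case, so no telescoping is needed. You instead carry that event through to the last region and cancel it against the $G^{*<}_{h,x}$-drift. Your telescoping is correct---indeed the combined contribution on $\{\xi(y)\le -s(y)\}$ is simply $G^*_{h,x}(y+\xi(y))-G^*_{h,x}(y)\le 0$ by monotonicity, so the appeal to \eqref{lower.G.1*} there is not even required---but the paper's upfront restriction is the cleaner way to organise the argument and sidesteps what you flag as the ``main obstacle''. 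One minor point: in the middle region, the intermediate inequality $m_1^{[s(y)]}(y)\,g^*_{h,x}(y)\le 2h\,m_1^{[s(y)]}(y)$ tacitly assumes $m_1^{[s(y)]}(y)\ge 0$; the final bound $hr(y)m_2^{[s(y)]}(y)$ holds regardless (the product is non-positive otherwise), so this does not affect correctness.
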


\begin{proof}
Since $G^*_{h,x}(y)$ is increasing in $y$, we obtain 
\begin{eqnarray*}
\E G^*_{h,x}(y+\xi(y))-G^*_{h,x}(y) &\le&
\E\{G^*_{h,x}(y+\xi(y))-G^*_{h,x}(y);\ \xi(y)\ge -s(y)\}\nonumber\\
&=& \E\{G^*_{h,x}(y+\xi(y))-G^*_{h,x}(y);\ |\xi(y)|\le s(y)\}
\ =:\ E,
\end{eqnarray*}
due to \eqref{cond.tail.right}.

\underline  {Case $y\le x-t$.} 
It follows from the definition of $G^*_{h,x}$ that
$G^*_{h,x}(x+z)\le 2hz$ for all $z>0$ which yields
$G^*_{h,x}(y+z)\le 2h(y-x+z)$ for all $y\le x$ and $z>0$. Therefore,
\begin{eqnarray}\label{case1}
E &\le& 2h \E\left\{\xi(y);\xi(y)\in(x-y, s(y)]\right\}, 
\end{eqnarray}
and the conclusion of the lemma follows for $y\le x-t$. 

\underline{Case $y\in (x-t,x+h+t]$.} 
We proceed similarly to Lemma~\ref{lem:upper}. 
By Taylor's expansion \eqref{E.Taylor}, 
\begin{eqnarray*}
E &\le& m_1^{[s(y)]}(y)g^*_{h,x}(y)+m_2^{[s(y)]}(y)\\
&\le& \frac12m_2^{[s(y)]}(y)r(y)g^*_{h,x}(y)+m_2^{[s(y)]}(y)\\
&\le& m_2^{[s(y)]}(y)(hr(y)+1),
\end{eqnarray*}
due to \eqref{m1_above}, \eqref{eq_first_derivative_gh.der*}
and inequality $g^*_{h,x}(y)\le 2h$, for all sufficiently large $y$.
Thus the conclusion of the lemma follows for $y\in (x-t,x+h+t]$. 

\underline{Case $y>x+h+t$.} Since the function $G(y)$ is concave,
\begin{eqnarray*}
G(y-z)-G(y) &\le& zG'(y-z)\ =\ ze^{-R(y-z)}\quad\mbox{for all }z>0. 
\end{eqnarray*}
Therefore, as $y\to\infty$,
\begin{eqnarray*}
G(y-z)-G(y) &\le& ze^{-R(y)}(1+o(1))\quad\mbox{uniformly for all }z\in[0,s(y)]. 
\end{eqnarray*}
Thus it follows from \eqref{lower.G.1*} that, as $y\to\infty$,
\begin{eqnarray}\label{G*-**}
G^*_{h,x}(y-z)-G^{*<}_{h,x}(y-z) &\le& 2hze^{R(x+h)-R(y)}(1+o(1))\nonumber\\
&\le& 2hz(1+o(1))\quad\mbox{uniformly for all }h,z\in[0,s(y)]. \nonumber\\[-1mm]
\end{eqnarray}
The inequality \eqref{G*ge**} and equality \eqref{G*=**} 
allow us to conclude that, for $y>x+h$,
\begin{eqnarray*}
E &=& \E\{G^{*<}_{h,x}(y+\xi(y))-G^{*<}_{h,x}(y);\ |\xi(y)|\le s(y)\}\\
&&+\E\{G^*_{h,x}(y+\xi(y))-G^{*<}_{h,x}(y+\xi(y));\ |\xi(y)|\le s(y)\}\\
&=& \E\{G^{*<}_{h,x}(y+\xi(y))-G^{*<}_{h,x}(y);\ |\xi(y)|\le s(y)\}\\
&&+\E\{G^*_{h,x}(y+\xi(y))-G^{*<}_{h,x}(y+\xi(y));\ \xi(y)\in[-s(y),x+h-y]\}\\
&\le& \E\{G^*_{h,x}(y+\xi(y))-G^{*<}_{h,x}(y+\xi(y));\ \xi(y)\in[-s(y),x+h-y]\},
\end{eqnarray*}
by the second statement of Lemma \ref{lem:upper.U}.
Applying here \eqref{G*-**} we deduce, for all sufficiently large $x$ and $y>x+h$,
\begin{eqnarray*}
E &\le& 3h \E\{|\xi(y)|;\ \xi(y)\in[-s(y),x+h-y]\}.
\end{eqnarray*}
Combining altogether we conclude the result of the lemma for $y>x+h+t$. 
\end{proof}

\begin{proposition}\label{prop5}
Let the assumptions of Lemma~\ref{thm:renewal.lb} hold. 
Then there exists an $x^*>0$ such that, 
for all $x>x^*$, $y\ge 0$, $h\le s(x)$, and $t\in(0,h/2)$,
\begin{eqnarray*}
H(x-t,x+h+t] &\ge& 
\frac{G^*_{h,x}(\infty)-\E G^*_{h,x}(X_0)-\delta(x)}
{\max_{y\in[x-t,x+h+t]}(1+hr(y))m_2^{[s(y)]}(y)},
\end{eqnarray*}
where 
\begin{eqnarray*}
\delta(x) &=& 2h \int_{x-s(x)}^{x-t} H(dy) \E\{\xi(y);x-y<\xi(y)<s(y)\}\\
&&\hspace{5mm}+3h \int_{x+h+t}^\infty H(dy) \E\{|\xi(y)|;-s(y)<\xi(y)<x+h-y\}. 
\end{eqnarray*}
\end{proposition}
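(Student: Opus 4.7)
The plan is to mimic the argument of Proposition~\ref{thm:renewal.ub}, with two alterations: instead of the one-sided super-martingale bound of Lemma~\ref{lem:upper}, I use the four-case piecewise upper bound for the one-step drift of $G^*_{h,x}$ provided by Lemma~\ref{thm:renewal.lb}, and instead of the trivial inequality $\E G^{**}_{h,x}(X_n)\le G^{**}_{h,x}(\infty)$ I exploit the fact that $X_n\to\infty$ a.s.\ to pass to a matching equality in the limit on the other side.

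First, I would write
\[
\E G^*_{h,x}(X_n)-\E G^*_{h,x}(X_0)=\sum_{k=0}^{n-1}\E\bigl[G^*_{h,x}(X_{k+1})-G^*_{h,x}(X_k)\bigr]
\]
and, conditioning on $X_k$, apply Lemma~\ref{thm:renewal.lb} term by term. Splitting the state space into $(-\infty,x-s(x)]$, $(x-s(x),x-t]$, $(x-t,x+h+t]$, and $(x+h+t,\infty)$, the four cases of the lemma produce a piecewise upper-bounding function $b(y)\ge 0$; the contributions on the two outlying intervals are non-negative because the conditioning events force $\xi(y)$ to have a definite sign there, so that all the terms may harmlessly be integrated against $H$.

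Next, I let $n\to\infty$. Since $G^*_{h,x}$ is bounded, non-decreasing, and satisfies $G^*_{h,x}(y)\uparrow G^*_{h,x}(\infty)<\infty$ as $y\to\infty$, and since $X_n\to\infty$ almost surely---a property established, as noted in the paper, through the bounded non-negative super-martingale constructed en route to Theorems~\ref{thm:regular}--\ref{thm:weibull}---bounded convergence gives $\E G^*_{h,x}(X_n)\to G^*_{h,x}(\infty)$. On the right-hand side, monotone convergence applied separately to the three non-trivial pieces of $b$ yields
\[
G^*_{h,x}(\infty)-\E G^*_{h,x}(X_0)\ \le\ \delta(x)
+\int_{x-t}^{x+h+t}\bigl(1+hr(y)\bigr)\, m_2^{[s(y)]}(y)\, H(dy),
\]
with $\delta(x)$ being exactly the expression in the statement. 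Bounding the remaining integrand by its maximum over $[x-t,x+h+t]$ and rearranging produces the claimed lower bound.

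The only genuinely delicate point is the justification of $\E G^*_{h,x}(X_n)\to G^*_{h,x}(\infty)$; everything else is routine termwise bookkeeping. This step requires $X_n\to\infty$ a.s., which is not a stated hypothesis of Proposition~\ref{prop5} itself but is implicit, as it is available from the auxiliary super-martingale constructions carried out in the proofs of the main theorems and is already used in the same way at the end of Proposition~\ref{thm:renewal.ub}.
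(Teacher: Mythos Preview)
Your proposal is correct and follows essentially the same route as the paper: telescope $G^*_{h,x}(X_n)$, bound each increment via Lemma~\ref{thm:renewal.lb}, and pass to the limit using $X_n\to\infty$ a.s.\ (the paper justifies this last step by invoking Lemma~\ref{lem:upper} together with the martingale convergence theorem and~\eqref{eq:irreducibility}). One small correction: Proposition~\ref{thm:renewal.ub} does \emph{not} use $X_n\to\infty$---there the trivial bound $\E G^{**}_{h,x}(X_n)\le G^{**}_{h,x}(\infty)$ already suffices---so the limiting step is genuinely new here rather than a repeat.
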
	

\begin{proof}
Consider the decomposition 
$$
G^*_{h,x}(X_n)
=\sum_{k=0}^{n-1} (G^*_{h,x}(X_{k+1})-G^*_{h,x}(X_k))+ G^*_{h,x}(X_0).
$$
We deduce from Lemma \ref{thm:renewal.lb} that, 
for some $c<\infty$ and all $x>x_*$,
\begin{eqnarray*}
\lefteqn{\E G^*_{h,x}(X_n)}\\
&=& \E G^*_{h,x}(X_0)
+\sum_{k=0}^{n-1} \E(G^*_{h,x}(X_{k+1})-G^*_{h,x}(X_k))\\
&\le& \E G^*_{h,x}(X_0)+ \sum_{k=0}^{n-1} 
\E\left\{(1+hr(X_k))m_2^{[s(X_k)]}(X_k);X_k \in (x-t,x+h+t]\right\}\\ 
&&+2h \sum_{k=0}^{n-1} \int_{x-s(x)}^{x-t} \P\{X_k \in dy\} 
\E\{\xi(y);x-y<\xi(y)<s(y)\}\\  
&&+3h\sum_{k=0}^{n-1} \int_{x+h+t}^\infty
\P\{X_k \in dy\} \E\{|\xi(y)|;-s(y)<\xi(y)<x+h-y\}.
\end{eqnarray*}
Hence, for any $n$, 
\begin{eqnarray*}
\sum_{k=0}^{n-1} \P\{X_k \in (x-t,x+h+t]\} &\ge& 
\frac{\E G^*_{h,x}(X_n)-\E G^*_{h,x}(X_0)-\delta(x)}
{\max_{y\in[x-t,x+h+t]}(1+hr(y))m_2^{[s(y)]}(y)}.
\end{eqnarray*}
Letting $n$ to infinity we arrive at the conclusion
due to the convergence $G^*_{h,x}(X_n)\to G^*_{h,x}(\infty)$
which in its turn follows from Lemma~\ref{lem:upper} together with 
the martingale convergence theorem and the assumption~\eqref{eq:irreducibility}.  
\end{proof}

In order to get a lower bound in a closed form, 
we need to derive conditions under which the term $\delta(x)$ 
in Proposition~\ref{prop5} is of order $o(G^*_{h,x}(\infty))$ as $x\to\infty$.
In the next result we demonstrate how to bound $\delta(x)$ 
provided an appropriate upper bound for the renewal measure is available.

\begin{lemma}\label{thm:renewal.ub.lower}
Let
\begin{eqnarray}\label{general_upper}
H(x+t,x+h-t] &\le& C_1h U(x)e^{R(x)}\quad\mbox{for some }C_1<\infty,
\end{eqnarray}
and, for some random variable $\xi$ with $\E\xi^2<\infty$,
\begin{eqnarray}\label{majorant_third.lower}
|\xi(y)| &\le_{st}& \xi\quad\mbox{for all }y\ge 0.
\end{eqnarray}
Then $\delta(x) = o\bigl(hU(x)e^{R(x)}\bigr)$ as $x\to\infty$.
\end{lemma}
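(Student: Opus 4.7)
The plan is to decompose $\delta(x) = I_1 + I_2$ into the two integrals appearing in its definition and to show each is $o(hU(x)e^{R(x)})$; the arguments for $I_1$ and $I_2$ are symmetric, so I describe $I_1$ in detail. First, stochastic dominance \eqref{majorant_third.lower} applied to the non-decreasing function $u\mapsto u\I\{u>x-y\}$ gives $\E\{\xi(y);\,x-y<\xi(y)<s(y)\}\le\E\{\xi;\,\xi>x-y\}$, after which Fubini converts the iterated integral into an expectation over $\xi$:
\[
I_1\ \le\ 2h\,\E\bigl[\xi\cdot H\bigl(\max(x-\xi,x-s(x)),\,x-t\bigr]\bigr].
\]
I then split the right-hand side according to whether $\xi\le s(x)$ or $\xi>s(x)$.

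On $\{t<\xi\le s(x)\}$, the interval $(x-\xi,x-t]$ has length $\xi-t\le s(x)$, so hypothesis \eqref{general_upper}---applied to a translated parameter set, and, if necessary, after slicing the interval into boundedly many sub-intervals of length $\le s(x)/2$---combined with the insensitivity \eqref{eq.r.insensitivity} of $U(y)e^{R(y)}$ to shifts of order $s(x)$ yields $H(x-\xi,x-t]\le C_2\,\xi\, U(x)e^{R(x)}$. Thus this part of $I_1$ is bounded by $2hC_2\,U(x)e^{R(x)}\,\E[\xi^2\I\{\xi>t\}]$, which is $o(hU(x)e^{R(x)})$ provided $t=t(x)\to\infty$, since $\E\xi^2<\infty$. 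On the complementary event $\{\xi>s(x)\}$ the slicing bound gives $H(x-s(x),x-t]=O(s(x)U(x)e^{R(x)})$, and the Markov-type estimate $\E[\xi\I\{\xi>s(x)\}]\le\E[\xi^2\I\{\xi>s(x)\}]/s(x)=o(1/s(x))$ (again from $\E\xi^2<\infty$) makes this contribution $o(hU(x)e^{R(x)})$ as well. The integral $I_2$ is treated identically: for $y>x+h+t$ the constraint $\xi(y)<x+h-y$ forces $|\xi(y)|>t$, after which Fubini gives $I_2\le 3h\,\E[\xi\cdot H(x+h+t,\,x+h+\xi]]$, and the same two-case analysis applies.

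The main technical obstacle is the slicing step for the $H$-measure: one must apply \eqref{general_upper} to a collection of translated parameter sets $(x',h',t')$ whose union covers the interval of interest, and verify that the prefactor $U(x')e^{R(x')}$ remains comparable to $U(x)e^{R(x)}$ throughout. This relies on the lower bound $U(x)e^{R(x)}\ge\int_x^{x+s(x)}e^{R(x)-R(z)}\,dz\ge s(x)(1-o(1))$, which together with \eqref{eq.r.insensitivity} shows that $U(x')e^{R(x')}/(U(x)e^{R(x)})$ is uniformly bounded for $|x'-x|\lesssim s(x)$; with this in hand, the two-sided tail bounds coming from $\E\xi^2<\infty$ deliver the required $o(hU(x)e^{R(x)})$ estimate.
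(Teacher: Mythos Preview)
Your approach is essentially the same as the paper's. The paper also begins with the stochastic domination \eqref{majorant_third.lower} to replace $\E\{\xi(y);x-y<\xi(y)<s(y)\}$ by $\E\{\xi;\xi>x-y\}$, and then controls the resulting $H$-integral via hypothesis \eqref{general_upper}. Where you apply Fubini to reach $2h\,\E[\xi\cdot H(\max(x-\xi,x-s(x)),x-t]]$ and then split on $\xi\lessgtr s(x)$, the paper instead slices the $y$-integral directly into pieces of length $t$:
\[
\int_{x-s(x)}^{x-t}H(dy)\,\E\{\xi;\xi>x-y\}\ \le\ \sum_{n=1}^{s(x)/t}H(x-(n+1)t,x-nt]\,\E\{\xi;\xi>nt\},
\]
applies \eqref{general_upper} with parameter $h'=3t$ to each piece to get $H(x-(n+1)t,x-nt]\le C_2\,t\,U(x)e^{R(x)}$, and then uses $\sum_{n\ge1}\E\{\xi;\xi>nt\}\le t^{-1}\E\{\xi^2;\xi>t\}$ to conclude. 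This sidesteps your ``slicing step for the $H$-measure'' discussion: each piece already has length $t$ and all pieces lie within $s(x)$ of $x$, so the insensitivity \eqref{eq.r.insensitivity} applies immediately with no further decomposition needed. Your Fubini reorganisation is a legitimate equivalent, just slightly less direct.

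One genuine asymmetry you gloss over: for $I_1$ the Fubini interval is automatically capped at length $s(x)$ by the lower limit $x-s(x)$ of integration, so on $\{\xi>s(x)\}$ you get the \emph{fixed} interval $(x-s(x),x-t]$. For $I_2$ there is no such cap after your majorisation and Fubini: the interval $(x+h+t,x+h+\xi]$ can be arbitrarily long on $\{\xi>s(x)\}$, and hypothesis \eqref{general_upper} only covers intervals of length $\le s(\cdot)$, so your ``the same two-case analysis applies'' does not literally go through. The fix is to retain the constraint $-s(y)<\xi(y)$ from the original integrand of $I_2$, which forces the $y$-integral to vanish once $y-s(y)>x+h$, giving a finite effective range before Fubini. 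The paper is equally brief on $I_2$ (``by the same arguments''), but its discrete slicing respects this implicit cutoff more naturally than your Fubini formulation.
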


\begin{proof}
Let us analyse the first term in $\delta(x)$.
The stochastic majorisation condition \eqref{majorant_third.lower} yields that 
\begin{eqnarray*}
\int_{x-s(x)}^{x-t} H(dy) \E\{\xi(y);\ x-y<\xi(y)<s(y)\} 
&\le& \int_{x-s(x)}^{x-t} H(dy) \E\{\xi;\ \xi>x-y\}.
\end{eqnarray*}
Further, using the upper bound \eqref{general_upper} 
applied to $h(x)=3t$ we deduce 
\begin{eqnarray*}
\int_{x-s(x)}^{x-t(x)} H(dy) \E\{\xi;\ \xi>x-y\} 
&\le& \sum_{n=1}^{s(x)/t}	H(x-(n+1)t,x-nt] \E\{\xi;\ \xi>nt\}\\ 
&\le& C_2tU^*(x)e^{R^*(x)} \sum_{n=1}^{s(x)/t} \E\{\xi;\ \xi>nt\} \\ 
&\le& C_2tU^*(x)e^{R^*(x)} \E\{\xi^2/t;\ \xi>t\}\\
&=& o(U^*(x)e^{R^*(x)})\quad\mbox{as }t,\ x\to\infty,
\end{eqnarray*}
by the condition $\E\xi^2<\infty$. Hence the first term in $\delta(x)$ 
is of order $o\bigl(h(x)U^*(x)e^{R^*(x)}\bigr)$ as required.
The second term in $\delta(x)$ is of the same order,
as follows by the same arguments, and we conclude the proof.
\end{proof}

\section{On two Markov chains with asymptotically equal jumps}

In this section, we prove a coupling that allows us to compare
two Markov chains which have asymptotically equal jumps.
The following result is repeatedly used in the sequel each time we want
to simplify our calculations related to the characteristics of $X_n$.
We formulate this result in the following general setting.

Let $Y_n$ and $Z_n$ be two Markov chains with jumps
$\eta(x)$ and $\zeta(x)$ respectively.
Denote by $H_y^Y$ the renewal measure
generated by the chain $Y_n$ with initial state $Y_0=y$, that is,
\begin{eqnarray*}
H_y^Y(A) &:=& \sum_{n=0}^\infty \P_y\{Y_n\in A\},\quad A\in\mathcal B(\R).
\end{eqnarray*}

\begin{lemma}\label{l:XY.equiv}
Let the random variables $\eta(x)$ and $\zeta(x)$ 
be constructed on the same probability space in such a way that
\begin{eqnarray}\label{rec.3.1.hy}
\P\{\eta(x)\not=\zeta(x)\} &\le& p(x)v(x)\quad\mbox{for all }x,
\end{eqnarray}
where $v(x)>0$ and $p(x)>0$ are decreasing functions
and $p(x)>0$ is integrable at infinity.
Let also, for some $c<\infty$,
\begin{eqnarray}\label{rec.3.1.hyz}
H_y^Y(x,2x] &\le& \frac{cx}{v(x)}
\quad\mbox{for all }y\mbox{ and }x.
\end{eqnarray}
Then, for any $\varepsilon>0$ there exists an $x_\varepsilon$ such that
the chains $Y_n$ and $Z_n$ may be constructed on the same probability
space in such a way that
\begin{eqnarray*}
\P\{Y_n=Z_n\mbox{ for all }n\ge 0\} &\ge& 1-\varepsilon
\end{eqnarray*}
provided $Z_0=Y_0>x_\varepsilon$.
\end{lemma}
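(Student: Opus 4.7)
The plan is to build a synchronous coupling of $(Y_n,Z_n)$ step by step, with the total disagreement probability controlled by a single renewal-measure integral. First I would set $Z_0=Y_0$ and, inductively, whenever $Y_n=Z_n=y$, sample the next pair of increments from the joint law of $(\eta(y),\zeta(y))$ provided by \eqref{rec.3.1.hy}, extending arbitrarily (say, independently) after the first disagreement. Let $\tau:=\inf\{n\ge 1:Y_n\ne Z_n\}$. On the event $\{\tau>n\}$ the hypothesis gives $\P(\tau=n+1\mid\mathcal F_n)\le p(Y_n)v(Y_n)$, so by the union bound
$$
\P\{\tau<\infty\}\ \le\ \sum_{n=0}^\infty\E\bigl[p(Y_n)v(Y_n);\,\tau>n\bigr]\ \le\ \int p(y)v(y)\,H_{Y_0}^Y(dy),
$$
and it remains to show this integral is below $\varepsilon$ once $Y_0$ is sufficiently large.

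For the tail of this integral I would use the monotonicity of $p$ and $v$ together with the dyadic form of \eqref{rec.3.1.hyz}:
$$
\int_M^\infty p(y)v(y)\,H_{Y_0}^Y(dy)\ \le\ \sum_{k\,:\,2^k\ge M}p(2^k)v(2^k)H_{Y_0}^Y(2^k,2^{k+1}]\ \le\ c\sum_{k\,:\,2^k\ge M}2^k p(2^k).
$$
Because $p$ is decreasing, $2^kp(2^k)\le 2\int_{2^{k-1}}^{2^k}p(t)\,dt$, so the last sum is bounded by $4c\int_{M/2}^\infty p(t)\,dt$, which vanishes as $M\to\infty$ by integrability of $p$. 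Fixing $M=M(\varepsilon)$ so that this tail is below $\varepsilon/2$, the bound is uniform in $Y_0$.

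For the head $\int_0^M p(y)v(y)\,H_{Y_0}^Y(dy)\le p(0)v(0)\,H_{Y_0}^Y(0,M]$, I would argue that $H_{Y_0}^Y(0,M]\to 0$ as $Y_0\to\infty$. Hypothesis \eqref{rec.3.1.hyz} makes every bounded set uniformly transient, hence $Y_n\to\infty$ a.s.\ in the transient-to-$+\infty$ setting of the applications. The strong Markov property at $T_M:=\inf\{n:Y_n\le M\}$ gives
$$
H_{Y_0}^Y(0,M]\ \le\ \P_{Y_0}\{T_M<\infty\}\cdot\sup_{y\le M}H_y^Y(0,M],
$$
with the supremum finite by yet another dyadic decomposition of $(0,M]$ using \eqref{rec.3.1.hyz}. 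Choosing $x_\varepsilon$ large enough to drive $\P_{Y_0}\{T_M<\infty\}$ below the required threshold then puts the head under $\varepsilon/2$.

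The main obstacle is this last passage: upgrading the qualitative transience built into \eqref{rec.3.1.hyz} to the quantitative escape estimate $\P_{Y_0}\{T_M<\infty\}\to 0$. In the applications this is supplied by the bounded nonnegative supermartingale constructed in the proofs of Theorems~\ref{thm:regular}--\ref{thm:weibull}, which simultaneously witnesses $X_n\to\infty$ a.s.\ and controls the first-passage probability to bounded sets; I would invoke that supermartingale (or an analogous Lyapunov function tailored to the chain at hand) to close the argument.
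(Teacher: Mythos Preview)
Your argument is correct and mirrors the paper's proof: both build the synchronous coupling, bound the disagreement probability by the renewal integral $\int p(y)v(y)\,H^Y_{Y_0}(dy)$, and control its tail over $(M,\infty)$ by the same dyadic decomposition using \eqref{rec.3.1.hyz}. On the one delicate point---the escape estimate $\P_{Y_0}\{T_M<\infty\}\to 0$---the paper simply asserts it as \eqref{rec.3.1.sslln}, claiming it follows from \eqref{rec.3.1.hyz} without further argument, whereas you explicitly note that in the applications it is supplied by the bounded nonnegative supermartingale of Theorems~\ref{thm:regular}--\ref{thm:weibull}; your caution is well placed, since \eqref{rec.3.1.hyz} by itself does not force this escape estimate for arbitrary chains.
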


\begin{proof}
It follows from the condition \eqref{rec.3.1.hyz} that, for all $z\in\R$,
\begin{eqnarray}\label{rec.3.1.sslln}
\P\{Y_n>z\mbox{ for all }n\ge0\mid Y_0=y\} &\to& 1\quad\mbox{as }y\to\infty.
\end{eqnarray}

Let us construct a probability space and two sequences of independent
random fields $\{\eta_n(x),x\in\R\}_{n\ge 0}$ and
$\{\zeta_n(x),x\in\R\}_{n\ge 0}$ on this space such that
\begin{eqnarray}\label{rec.3.1.hy.n}
\P\{\eta_n(x)\not=\zeta_n(x)\} &\le& p(x)v(x)
\quad\mbox{for all }x\in\R\mbox{ and }n\ge 0,
\end{eqnarray}
which is possible due to \eqref{rec.3.1.hy}.
Then let us define Markov chains as follows:
\begin{eqnarray*}
Y_{n+1}\ =\ Y_n+\eta_{n+1}(Y_n), && Z_{n+1}\ =\ Z_n+\zeta_{n+1}(Z_n),
\end{eqnarray*}
Fix an $\varepsilon>0$. For any $z$,
\begin{eqnarray*}
\lefteqn{\P\{Z_n\neq Y_n\mbox{ for some }n\mid Y_0=y\}}\\
&&\hspace{7mm}\le\ \P\{Y_n\le z\mbox{ for some }n\mid Y_0=y\}\\
&&\hspace{20mm}+\P\{Z_n\neq Y_n\mbox{ for some }n,
Y_n\ge z\mbox{ for all }n\mid Y_0=y\}.
\end{eqnarray*}
Owing to \eqref{rec.3.1.sslln}, there exists an $y_1(z)$ such that
\begin{eqnarray*}
\P\{Y_n\le z\mbox{ for some }n\mid Y_0=y\}
&\le& \varepsilon/2\quad\mbox{for all }y>y_1(z).
\end{eqnarray*}
Given $Z_0=Y_0>z$,
\begin{eqnarray*}
\lefteqn{\P\{Z_n\neq Y_n\mbox{ for some }n,\ Y_n>z\mbox{ for all }n\mid Y_0=y\}}\\
&&\hspace{5mm}\le\ \P\{\eta_{n+1}(Y_n)\not=\zeta_{n+1}(Z_n),\ Y_n=Z_n
\mbox{ for some }n,\ Y_n>z\mbox{ for all }n\mid Y_0=y\}.
\end{eqnarray*}
The probability on the right hand side does not exceed the following sum
\begin{eqnarray*}
&&\sum_{n=0}^\infty\P\{\eta_{n+1}(Y_n)\not=\zeta_{n+1}(Z_n), 
\ Z_n=Y_n>z\mid Y_0=y\}\\
&&\hspace{3cm}\le \int_z^\infty \P\{\eta(x)\not=\zeta(x)\}H^Y_y(dx)\\
&&\hspace{3cm}\le \int_z^\infty p(x)v(x) H^Y_y(dx),
\end{eqnarray*}
by the condition \eqref{rec.3.1.hy}.
The last integral tends to $0$ as $z\to\infty$.
Indeed, both functions $p(z)$ and $v(x)$ are decreasing, hence
\begin{eqnarray*}
\int_{2z}^\infty p(x)v(x) H^Y_y(dx) &\le&
\sum_{i=1}^\infty p(x_i)v(x_i) H^Y_y(x_i,x_{i+1}],
\end{eqnarray*}
where $x_i:=2^{i-1} z$ for $i\ge 0$.
Then, by the condition \eqref{rec.3.1.hyz} on $H_y^Y$,
\begin{eqnarray*}
\int_{2z}^\infty p(x)v(x) H^Y_y(dx)
&\le& c\sum_{i=1}^\infty p(x_i) x_i\\
&=& 2c\sum_{i=1}^\infty p(x_i)(x_i-x_{i-1}).
\end{eqnarray*}
Then decrease of the function $p(x)$ yields
\begin{eqnarray*}
\sum_{i=1}^\infty p(x_i)(x_i-x_{i-1})
&\le& \int_z^\infty p(u)du\ \to\ 0\quad\mbox{as }z\to\infty,
\end{eqnarray*}
because $p(x)$ is integrable. Hence,
\begin{eqnarray}\label{int.prH.fin}
\int_{2z}^\infty p(x)v(x) H^Y_y(dx) &\to& 0
\quad\mbox{as }z\to\infty\mbox{ uniformly for all }y,
\end{eqnarray}
which implies convergence to $0$ of the integral from $z$ to $\infty$.
Then the integral from $z$ to $\infty$ is less than $\varepsilon/2$
for a sufficiently large $z=z(\varepsilon)$ which
concludes the proof with $x_\varepsilon=y_1(z(\varepsilon))$.
\end{proof}

\begin{lemma}\label{l:XY.renew.equiv}
Let the conditions of Lemma \ref{l:XY.equiv} hold. If there exist
non-negative functions $h(x)$ and $g(x)$ such that
\begin{equation}
\label{equiv.1}
H^Y(x,x+h(x)]\sim g(x)\quad\mbox{as }x\to\infty
\end{equation}
for any distribution of $Y_0$ and
\begin{equation}\label{equiv.2}
\sup_y H_y^Y(x,x+h(x)]=O(g(x))\quad\mbox{as }x\to\infty,
\end{equation}
then, for any distribution of $Z_0$,
$$
H^Z(x,x+h(x)]\sim g(x)\quad\mbox{as }x\to\infty.
$$
\end{lemma}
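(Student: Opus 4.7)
The plan is to transfer the renewal asymptotics from $Y$ to $Z$ via the coupling of Lemma~\ref{l:XY.equiv}. Set $B_x:=(x,x+h(x)]$, fix an arbitrary $\varepsilon>0$, and take $x_\varepsilon$ from that lemma. First treat the case $Y_0=Z_0=y$ with $y>x_\varepsilon$: realise $Y$ and $Z$ on the joint probability space provided by the coupling, driven by the independent update fields $\eta_n,\zeta_n$, and let $\mathcal F_n$ be the natural joint filtration. Since each field is independent of $\mathcal F_{n-1}$, both $Y$ and $Z$ are Markov with respect to $(\mathcal F_n)$, and $\tau:=\inf\{n:Y_n\ne Z_n\}$ is an $(\mathcal F_n)$-stopping time with $\P(\tau<\infty)\le\varepsilon$.

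The crucial identity $\I\{Z_n\in B_x\}-\I\{Y_n\in B_x\}=(\I\{Z_n\in B_x\}-\I\{Y_n\in B_x\})\I\{\tau\le n\}$, followed by summation in $n$ and the strong Markov property at $\tau$ applied to $Y$ and to $Z$ separately, yields
\[
|H_y^Z(B_x)-H_y^Y(B_x)|\ \le\ \E[H_{Y_\tau}^Y(B_x);\tau<\infty]+\E[H_{Z_\tau}^Z(B_x);\tau<\infty].
\]
By \eqref{equiv.2} the first expectation is at most $\varepsilon\sup_{y'}H_{y'}^Y(B_x)=O(\varepsilon g(x))$. For the second, write $M_Z(x):=\sup_{y'}H_{y'}^Z(B_x)$ and note that for $y'\le x_\varepsilon$ and $x>x_\varepsilon$ every visit of $Z$ to $B_x$ occurs after its first passage into $(x_\varepsilon,\infty)$, so the Markov property gives $H_{y'}^Z(B_x)\le\sup_{y''>x_\varepsilon}H_{y''}^Z(B_x)$. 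Taking the supremum in the preceding display produces
\[
M_Z(x)\ \le\ (1+\varepsilon)\sup_{y'}H_{y'}^Y(B_x)+\varepsilon M_Z(x),
\]
whence $M_Z(x)\le\tfrac{1+\varepsilon}{1-\varepsilon}\,O(g(x))$ as soon as $M_Z(x)<\infty$ is known a priori.

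The required a priori finiteness of $M_Z(x)$ is the main obstacle, since the inequality is a priori circular. It can be resolved either by iterating the coupling---concatenating independent coupling attempts each time $Z$ re-enters $(x_\varepsilon,\infty)$ (which happens infinitely often thanks to \eqref{eq:irreducibility}) shows that $Z_n\to\infty$ almost surely, so $H^Z$ is finite on every bounded set---or equivalently by a geometric-series estimate obtained by restarting the coupling from $Z_\tau$ and bounding each resulting remainder again through \eqref{equiv.2}. Once $M_Z(x)=O(g(x))$ is in hand, the displayed comparison gives $|H_y^Z(B_x)-H_y^Y(B_x)|=O(\varepsilon g(x))$ for every $y>x_\varepsilon$; combining with \eqref{equiv.1} yields $H_y^Z(B_x)/g(x)\to 1$ for such $y$. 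For an arbitrary initial distribution of $Z_0$, one conditions on the state of $Z$ at its first passage time into $(x_\varepsilon,\infty)$ and uses dominated convergence together with the uniform bound $M_Z(x)=O(g(x))$ to pass to the limit. Since $\varepsilon>0$ was arbitrary, $H^Z(x,x+h(x)]\sim g(x)$ follows.
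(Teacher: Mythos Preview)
Your approach is essentially the same as the paper's: couple via Lemma~\ref{l:XY.equiv}, bound the discrepancy by the decoupling event, derive the self-referential inequality $M_Z(x)\le(1+\varepsilon)\sup_{y'} H_{y'}^Y(B_x)+\varepsilon M_Z(x)$, and then pass to a general initial distribution by the first passage of $Z$ into $(x_\varepsilon,\infty)$. You are in fact more careful than the paper in flagging the a~priori finiteness of $M_Z(x)$; note, however, that your first proposed fix is not quite right---$Z_n\to\infty$ a.s.\ does not by itself yield $\sup_y H_y^Z(B_x)<\infty$, and \eqref{eq:irreducibility} is not a hypothesis of this lemma---whereas your second fix (iterated restarts, equivalently a truncation argument: apply the same inequality to $M_Z^N(x):=\sup_y\sum_{n=0}^N\P_y\{Z_n\in B_x\}\le N{+}1$ and let $N\to\infty$) resolves the circularity cleanly.
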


\begin{proof}
Let us construct $\{\eta_n(x),x\in\R\}_{n\ge 0}$ and
$\{\zeta_n(x),x\in\R\}_{n\ge 0}$ as in \eqref{rec.3.1.hy.n}
and then the Markov chains $Y_n$ and $Z_n$ as there.

Fix an $\varepsilon>0$ and let $x_\varepsilon$ be delivered by 
the last lemma. Let $\tau:=\min\{n\ge 0:Z_n>x_\varepsilon\}$ and 
consider $Y_k$ with initial value $Y_0=Z_\tau$. Define 
$$
\mu:=\min\{k\ge 1:Y_k\not= Z_{\tau+k}\}.
$$
By Lemma \ref{l:XY.equiv}, $\P\{\mu<\infty\}\le\varepsilon$. 
For $x>x_\varepsilon$,
\begin{eqnarray*}
\lefteqn{\sup_y H_y^Z(x,x+h(x)]}\\
&\le& \sup_y \E_y\sum_{n=\tau}^{\tau+\mu-1} \I\{Z_n\in(x,x+h(x)]\}
+\sup_y \E_y\sum_{n=\tau+\mu}^\infty \I\{Z_n\in(x,x+h(x)]\}.
\end{eqnarray*}
The first expectation on the right hand side is not greater than
$H_y^Y(x,x+h(x)]$ because $Z_n=Y_{n-\tau}$ between $\tau$ and $\tau+\mu-1$.
The second one possesses the following upper bound
\begin{eqnarray*}
\E_y\sum_{n=\mu}^\infty \I\{Z_n\in(x,x+h(x)]\} &=&
\E_y\Bigl\{\sum_{n=\tau+\mu}^\infty \I\{Z_n\in(x,x+h(x)]\}
\Big| \mu<\infty\Bigr\}
\P\{\mu<\infty\}\\ 
&\le& \sup_z H_z^Z(x,x+h(x)]\varepsilon.
\end{eqnarray*}
Therefore,
\begin{eqnarray*}
\sup_y H_y^Z(x,x+h(x)] &\le& \frac{1}{1-\varepsilon} \sup_yH_y^Y(x,x+h(x)],
\end{eqnarray*}
which, due to assumption \eqref{equiv.2} implies that 
\begin{equation}
\label{equiv.3}
\sup_y H_y^Z(x,x+h(x)]=O(g(x)).
\end{equation}

For any distribution of $Z_0$ we have
\begin{eqnarray*}
\lefteqn{H^Z(x,x+h(x)]}\\
&=& \E\sum_{n=\tau}^{\tau+\mu-1} \I\{Z_n\in(x,x+h(x)]\}
+\E\sum_{n=\tau+\mu}^\infty \I\{Z_n\in(x,x+h(x)]\}\\
&=& \E\sum_{n=\tau}^{\tau+\mu-1} \I\{Y_n\in(x,x+h(x)]\}
+\E\sum_{n=\tau+\mu}^\infty \I\{Z_n\in(x,x+h(x)]\}\\
&=& \E H^Y_{Z_\tau}(x,x+h(x)]\\
& &\hspace{5mm}-\E\E_{Z_\tau}\sum_{n=\mu}^{\infty} \I\{Y_n\in(x,x+h(x)]\}
+\E\sum_{n=\tau+\mu}^\infty \I\{Z_n\in(x,x+h(x)]\}
\end{eqnarray*}
According to \eqref{equiv.1} and \eqref{equiv.2},
$\E H^Y_{Z_\tau}(x,x+h(x)]\sim g(x)$. 
Further, as we have seen in the first part of the proof, for all large enough $x$,
$$
\E\sum_{n=\tau+\mu}^\infty \I\{Z_n\in(x,x+h(x)]\}
\le\varepsilon \sup_y H_y^Z(x,x+h(x)].
$$
Letting $\varepsilon\to0$ and using \eqref{equiv.3}, we conclude that
$$
\E\sum_{n=\tau+\mu}^\infty \I\{Z_n\in(x,x+h(x)]\}=o(g(x))
\quad\mbox{as }x\to\infty.
$$
Thus, it remains to show that 
$$
\E\E_{Z_\tau}\sum_{n=\mu}^{\infty} \I\{Y_n\in(x,x+h(x)]\}=o(g(x))
\quad\mbox{as }x\to\infty.
$$
But this expectation can be bounded in the same manner:
\begin{eqnarray*}
\E_{Z_\tau}\sum_{n=\mu}^{\infty} \I\{Y_n\in(x,x+h(x)]\}
&\le& \E\P_{Z_\tau}(\mu<\infty)\sup_y H_y^Y(x,x+h(x)]\\
&\le& \varepsilon \sup_y H_y^Y(x,x+h(x)].
\end{eqnarray*}
Combining this with \eqref{equiv.2} we complete the proof.
\end{proof}

\section{Proofs of Theorems \ref{thm:regular}, \ref{thm:critical}
and \ref{thm:weibull}}

\begin{proof}[Proof of Theorem \ref{thm:regular}]
Consider a modified Markov chain $\widetilde X_n$ on the same probability 
space as $X_n$ with jumps $\widetilde\xi(x)$ defined as follows:
\begin{eqnarray}\label{def:Z.jumps}
\widetilde\xi(x) &=& \left\{
\begin{array}{ll}
\xi(x) &\mbox{if }|\xi(x)|\le s(x);\\
\mbox{any value} &\mbox{if }|\xi(x)|>s(x).
\end{array}
\right.
\end{eqnarray}
If $\widetilde X_n$ does not satisfy the weak irreducibility condition 
\eqref{eq:irreducibility}, then we can increase the value of $s(x)$
on some set bounded above in such a way that then $\widetilde X_n$ do satisfy 
\eqref{eq:irreducibility}. Indeed, it follows from the conditions
\eqref{m1.m2.1x}, \eqref{majorant_third} and \eqref{majorant_third_moment_exists}
that there exist a sufficiently high level $x_0$ and an $\varepsilon>0$
such that $\P\{\xi(x)\ge\varepsilon\}\ge\varepsilon$ for all $x\ge x_0$.
Then it suffices to increase $s(x)$ on the set $(-\infty,x_0]$
to ensure the condition \eqref{eq:irreducibility} for $\widetilde X_n$.

Without loss of generality we assume that $h(x)\le s(x)$.
Let us choose a function $t(x)\uparrow\infty$ of order $o(h(x))$ as $x\to\infty$.

Fix some $c>1$ and consider $r(x)=c/(1+x)$. Then, 
$$
R(x)=c\log(1+x)\quad \mbox{ and } \quad U(x)=(1+x)^{1-c}/(c-1).  
$$ 
Therefore, 
\begin{eqnarray}
\label{regular_G}
U(x)e^{R(x)}=\frac{x+1}{c-1}.  
\end{eqnarray}

The chain $\widetilde X_n$ satisfies the condition \eqref{cond.tail.left}.
Fix some $c^{**}\in(1,2\mu/b)$ and define $r^{**}(x)=c^{**}/(1+x)$,
which ensures the condition \eqref{m1_below} with $\theta=(2\mu/bc^{**}-1)/2>0$.
The condition \eqref{sigma_uniform2} is immediate from the upper bound
\begin{equation}\label{eq_third_simplify}
\E\{|\xi(y)|^3;\ |\xi(y)|\le s(y)\}\le s(y)m_2^{[s(y)]}(y)
\end{equation}
and the relation $s(y) = o(y)$.  Also,
\begin{eqnarray*}
m_2^{[t(x)]}(y) &\to& b\quad\mbox{as }x\to\infty,
\end{eqnarray*}	
by the conditions \eqref{majorant_third} and \eqref{majorant_third_moment_exists}. 
As a result, by Proposition~\ref{thm:renewal.ub}, as $x\to\infty$,
\begin{eqnarray*}
\widetilde H(x+t(x),x+h(x)-t(x)] &\le& 
\frac{G^{**}_{h,x}(\infty)}{b+o(1)}\\
&\le& \frac{2+o(1)}{(c^{**}-1)b}xh(x),
\end{eqnarray*} 
owing to \eqref{G.upper.at.infty.ap} and
\eqref{regular_G}. Letting $c^{**}\to2\mu/b$, we get
$$
\widetilde H(x+t(x),x+h(x)-t(x)]\le\frac{2+o(1)}{2\mu-b}xh(x)
\quad\mbox{as }x\to\infty.
$$
Taking into account that $t(x)=o(h(x))$ we conclude the following upper bound  
\begin{eqnarray}\label{regular_upper}
\widetilde H(x,x+h(x)] &\le& \frac{2+o(1)}{2\mu-b}xh(x)\quad\mbox{as }x\to \infty.
\end{eqnarray}

The chain $\widetilde X_n$ satisfies the condition \eqref{cond.tail.right}.
Fix some $c^*>2\mu/b$ and define $r^*(x)=c^*/(1+x)$,
which ensures the condition \eqref{m1_above} with $\theta=(1-2\mu/bc^*)/2>0$.
Then it follows from Proposition~\ref{prop5} that, as $x\to\infty$,
\begin{eqnarray*}
\widetilde H(x-t(x),x+h(x)+t(x)] &\ge& 
\frac{G^*_{h,x}(\infty)-\E G^*_{h,x}(X_0)-\delta(x)}{b+o(1)}\\
&\ge& (2+o(1))\frac{h(x)\frac{x}{c^*-1}-\delta(x)}{b+o(1)},
\end{eqnarray*}
due to \eqref{G.upper.at.infty.ap*} and \eqref{regular_G}. By the condition \eqref{majorant_third},
the chain $\widetilde X_n$ satisfies \eqref{majorant_third.lower}
which together with the upper bound \eqref{regular_upper} for the renewal
measure generated by $\widetilde X_n$ yields the upper bound 
for $\delta(x)$ delivered by Lemma \ref{thm:renewal.ub.lower}. Therefore,
\begin{eqnarray*}
\widetilde H(x-t(x),x+h(x)+t(x)] &\ge& \frac{2+o(1)}{(c^*-1)b}xh(x).
\end{eqnarray*}
owing to \eqref{regular_G}.
Letting here $c^*\to 2\mu/b$ and since $t(x)=o(h(x))$, we finally get
\begin{eqnarray*}
\widetilde H(x,x+h(x)] &\ge& \frac{2+o(1)}{2\mu-b}xh(x)\quad\mbox{as }x\to\infty.
\end{eqnarray*}
Combining this lower bound with the upper bound \eqref{regular_upper},
we conclude that
\begin{eqnarray*}
\widetilde H(x,x+h(x)] &\sim& \frac{2}{2\mu-b}xh(x)\quad\mbox{as }x\to\infty.
\end{eqnarray*}
Together with the condition \eqref{regular_left_tail} this allows us 
to apply Lemma \ref{l:XY.renew.equiv} to the two Markov chains, $Z_n=X_n$ 
and $Y_n=\widetilde X_n$, hence the same asymptotics
for the renewal measure generated by $X_n$.
\end{proof}

\begin{proof}[Proof of Theorem \ref{thm:critical}]
As in the proof of Theorem \ref{thm:regular}, 
from the very beginning we may assume
that $|\xi(y)|\le s(y)$ for all $y$ which implies both 
\eqref{cond.tail.left} and \eqref{cond.tail.right}.
Without loss of generality we assume that $h(x)\le s(x)$.

Fix $c>1$ and consider
\begin{eqnarray*}
r(x) &=& \frac{1}{x+e_{(m)}}+\frac{1}{(x+e_{(m)})\log (x+e_{(m)})}\\
&&+\ldots+\frac{c}{(x+e_{(m)})\log (x+e_{(m)})\ldots\log_{(m)}(x+e_{(m)})},
\end{eqnarray*}
where $e_{(m)}>0$ is defined by $\log_{(m)} e_{(m)}=1$. Therefore,
\begin{eqnarray*}
R(x)&=&\log(x+e_{(m)})+\log\log(x+e_{(m)})\\
&&+\ldots+\log_{(m)}(x+e_{(m)})+c\log_{(m+1)}(x+e_{(m)})-C_m
\end{eqnarray*}
and
\begin{eqnarray*}
U(x)=\frac{e^{C_m}}{c-1}\left(\log_{(m)}(x+e_{(m)})\right)^{1-c},
\end{eqnarray*}
which implies from \eqref{G.upper.at.infty.ap} that, for $c^{**}<\gamma+1$,
\begin{eqnarray*}
G^{**}_{h(x),x}(\infty) &\le& \frac{2+o(1)}{c^{**}-1}h(x)x\log x\ldots\log_{(m)}x
\quad\mbox{as }x\to\infty,
\end{eqnarray*}
and from \eqref{G.upper.at.infty.ap*}, for $c^*>\gamma+1$,
\begin{eqnarray*}
G^*_{h(x),x}(\infty) &\ge& \frac{2+o(1)}{c^*-1}h(x)x\log x\ldots\log_{(m)}x
\quad\mbox{as }x\to\infty.
\end{eqnarray*}
Repeating the arguments used in the proof of Theorem~\ref{thm:regular}, 
we obtain the desired result.
\end{proof}

\begin{proof}[Proof of Theorem \ref{thm:weibull}]
As in the proof of Theorem \ref{thm:regular}, 
from the very beginning we may assume
that $|\xi(y)|\le s(y)$ for all $y$ which implies both 
\eqref{cond.tail.left} and \eqref{cond.tail.right}.
Without loss of generality we assume that $h(x)\le s(x)$.
Let us choose a function $t(x)\uparrow\infty$ of order $o(h(x))$ as $x\to\infty$.

Fix some $c>0$ and consider $r(x)=cv(x)$. 
Then, by  l'H\^{o}spital's rule,
$$
\frac{U(x)}{U'(x)} \sim \frac{1}{r(x)}.  
$$ 
Therefore, as follows from \eqref{G.upper.at.infty.ap}
\begin{eqnarray}\label{regular_G_weibull}
G^{**}_{h(x),x}(\infty) &\le& (2+o(1))\frac{h(x)}{r(x)}\quad\mbox{as }x\to\infty,  
\end{eqnarray}
and from \eqref{G.upper.at.infty.ap*}
\begin{eqnarray}\label{regular_G_weibull*}
G^*_{h(x),x}(\infty) &\ge& (2+o(1))\frac{h(x)}{r(x)}\quad\mbox{as }x\to\infty.  
\end{eqnarray}

Considering $c^{**}<2/b$ and $c^*>2/b$ and repeating the arguments 
used in the proof of Theorem~\ref{thm:regular}, we conclude the proof.
\end{proof}

\section{Proof of local renewal theorem 
for asymptotically homogeneous Markov chains}

In this section, our purpose is to provide an approach that allows us 
to reduce the proof of the asymptotic behaviour of the renewal measure
on intervals to that on sufficiently slowly growing intervals.

\begin{lemma}\label{key:lem1}
Assume that there exist functions $v(x)>0$ and $\widetilde t(x)\uparrow\infty$ 
such that, for any $t(x)\uparrow\infty$ satisfying $t(x)\le\widetilde t(x)$,
\begin{eqnarray*}
\sup_{x\ge 1}\frac{v(x)H(x,x+t(x)]}{t(x)} &<& \infty.
\end{eqnarray*}
Then,
\begin{eqnarray}\label{finite.bound.U}
\sup_{x\ge 1} v(x) H(x,x+1] &<& \infty.
\end{eqnarray}
\end{lemma}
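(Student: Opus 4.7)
The plan is to argue by contradiction. Suppose that $\sup_{x\ge 1} v(x) H(x,x+1]=\infty$; I will then construct a single admissible function $t(\cdot)$, satisfying $t(x)\uparrow\infty$ and $t(x)\le\widetilde t(x)$, for which the quantity $v(x)H(x,x+t(x)]/t(x)$ is unbounded in $x\ge 1$. That directly contradicts the hypothesis and so proves the lemma.

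First I would harvest a sequence of witnesses. Since $\sup_{x\ge 1} v(x) H(x,x+1]=\infty$ and $\widetilde t\uparrow\infty$, one can extract an increasing sequence $x_1<x_2<\cdots\to\infty$ satisfying simultaneously
$$
v(x_n) H(x_n, x_n+1]\ \ge\ n^2\quad\text{and}\quad \widetilde t(x_n)\ \ge\ n.
$$
Both conditions are arranged by passing to a subsequence (the second uses only that $\widetilde t(x)\to\infty$). Next define the step function
$$
t(x):=n\quad\text{for }x\in[x_n,x_{n+1}),\qquad t(x):=1\text{ for }x<x_1.
$$
Then $t$ is non-decreasing with $t(x)\to\infty$, and by monotonicity of $\widetilde t$ we have $t(x)\le\widetilde t(x)$ for $x\ge x_1$, so $t$ is admissible in the hypothesis (if the notation $\uparrow\infty$ is required to be strictly increasing, replace the step function by a piecewise linear interpolation between its corners, which does not affect the estimate below).

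Finally the contradiction is immediate at each witness. Because $t(x_n)=n\ge1$, the interval $(x_n,x_n+t(x_n)]$ contains $(x_n,x_n+1]$, so
$$
\frac{v(x_n)\, H(x_n,x_n+t(x_n)]}{t(x_n)}\ \ge\ \frac{v(x_n)\, H(x_n,x_n+1]}{n}\ \ge\ \frac{n^2}{n}\ =\ n\ \longrightarrow\ \infty,
$$
which contradicts the assumed finiteness of $\sup_{x\ge 1} v(x)H(x,x+t(x)]/t(x)$ for this particular $t$.

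The only delicacy in the argument is coordinating the three demands on $t$: it must tend to infinity (so cannot stay bounded), it must lie under $\widetilde t$ (so it cannot shoot up arbitrarily fast), and it must still be small enough at each $x_n$ so that the ratio $v(x_n)H(x_n,x_n+1]/t(x_n)$ diverges. The quadratic cushion $n^2$ in the lower bound for $v(x_n)H(x_n,x_n+1]$ supplies precisely the slack needed: after dividing by $t(x_n)=n$ a full factor of $n$ survives. This quantitative surplus is really the whole content of the lemma.
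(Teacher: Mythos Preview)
Your proof is correct and follows essentially the same route as the paper: argue by contradiction, extract a witnessing sequence $x_n\to\infty$ along which $v(x_n)H(x_n,x_n+1]\to\infty$, define $t$ as a step function that stays below $\widetilde t$ but grows slower than the witnessing values, and obtain a contradiction from $(x_n,x_n+1]\subseteq(x_n,x_n+t(x_n)]$. The only cosmetic difference is that the paper phrases the choice abstractly (pick $t_n\uparrow\infty$ with $t_n\le\widetilde t(x_n)$ and $t_n=o(\alpha_n)$), whereas you make the concrete choice $\alpha_n\ge n^2$, $t_n=n$.
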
	

\begin{proof}
Suppose that \eqref{finite.bound.U} fails. 
Then there exists a sequence $x_n\uparrow\infty$ such that 
$$
\alpha_n:= v(x_n) H(x_n,x_n+1] \to \infty\quad\mbox{as }n\to\infty. 
$$
Since both $\alpha_n$ and $\widetilde t(x_n)$ tend to infinity, there
exists a sequence  $t_n\uparrow\infty$ such that  
$t_n\le\widetilde t(x_n)$ and $t_n=o(\alpha_n)$ as $n\to\infty$. 
Let $t(x)$ be defined as follows 
$$
t(x) = t_n,\quad  x_n\le x< x_{n+1}. 
$$
Clearly, $t(x)\le\widetilde t(x)$ and $t(x)\uparrow\infty$. 
Then, eventually in $n$,
$$
\frac{v(x_n)H(x_n,x_n+t(x_n)]}{t(x_n)}
\ge 
\frac{v(x_n)H(x_n,x_n+1]}{t(x_n)} 
=\frac{\alpha_n}{t(x_n)}
\to\infty,
$$
which contradicts the hypothesis. 
\end{proof}	

\begin{proof}[Proof of Theorem \ref{thm:ah.renewal}.]
By Lemma~\ref{key:lem1} it follows from the assumption \eqref{eq.growing.intervals}   
that  the supremum in  (\ref{finite.bound.U}) is finite. 
In turn, it allows us to apply Helly's Selection Theorem 
to the family of measures $\{v(x)H(x+\cdot),\ x\in\R\}$ 
(see, for example, Theorem 2 in \cite[Section VIII.6]{Feller}). 
Hence, there exists a sequence of points $x_n\to\infty$ such that 
the sequence of measures $v(x_n)H(x_n+\cdot)$ converges weakly to some measure 
$\lambda$ as $n\to\infty$ in the standard sense of weak convergence
on bounded intervals. The following two results characterise $\lambda$.

\begin{lemma}\label{l.1}
Let $F$ denote the distribution of $\xi$.
A weak limit $\lambda$ of the sequence of measures $v(x_n)H(x_n+\cdot)$ 
satisfies the identity $\lambda=\lambda*F$.
\end{lemma}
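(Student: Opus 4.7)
The strategy exploits the one-step renewal identity
$$H(B)\ =\ \P\{X_0\in B\}+\int_\R H(dy)\,P(y,B),$$
obtained by summing the Markov decomposition over $n\ge0$, and then passes to the limit in the shifted measures $\mu_n:=v(x_n)H(x_n+\cdot)$. To keep the analysis clean I would test against a continuous $\phi$ with compact support in $[-K,K]$. Applying the identity to $\psi(w):=\phi(w-x_n)$, multiplying by $v(x_n)$, and making the change of variables $u=y-x_n$ gives
$$\int\phi\,d\mu_n\ =\ v(x_n)\E\phi(X_0-x_n)+\int\mu_n(du)\,g_n(u),\qquad g_n(u):=\E\phi(u+\xi(x_n+u)),$$
where the initial-state term vanishes for all large $n$ since $X_0-x_n$ eventually leaves $[-K,K]$ almost surely.

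The left hand side converges to $\int\phi\,d\lambda$ by the weak convergence $\mu_n\Rightarrow\lambda$, so the problem reduces to showing
$$\int g_n\,d\mu_n\ \longrightarrow\ \int g\,d\lambda,\qquad g(u):=\E\phi(u+\xi).$$
By Fubini, $\int g\,d\lambda=\int\phi\,d(\lambda*F)$, and since continuous compactly supported functions form a determining class for locally finite measures, this yields the required identity $\lambda=\lambda*F$.

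To prove the convergence $\int g_n\,d\mu_n\to\int g\,d\lambda$ I would split the integral at $|u|=N$. Inside, the asymptotic homogeneity \eqref{asymp.hom} gives $g_n(u)\to g(u)$ pointwise; combined with the uniform tightness of $\{\xi(x)\}$ forced by the integrable majorant $\Xi$ from \eqref{majoriz} and the uniform continuity of $\phi$, this upgrades to uniform convergence on $[-N,N]$, so together with the weak convergence $\mu_n\Rightarrow\lambda$ on this compact interval (for a $\lambda$-continuity point $N$) the inner contribution converges correctly. Outside, for $|u|>N>2K$, the support condition on $\phi$ forces $\phi(u+z)=0$ unless $|z|\ge|u|-K$, and the stochastic majorisation yields the uniform bound
$$|g_n(u)|,\ |g(u)|\ \le\ \|\phi\|_\infty\,\P\{\Xi\ge|u|-K\}.$$
Integrating this bound against $\mu_n$ and cutting the tail into unit translates, the local boundedness $\sup_a\mu_n(a,a+1]=O(1)$ supplied by Lemma~\ref{key:lem1} (applied at shifts $x_n+a$ and using boundedness of $v$) together with $\E\Xi<\infty$ makes the tail contribution arbitrarily small for $N$ large, uniformly in $n$.

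The principal technical obstacle is precisely that uniform local bound $\sup_{n,a}\mu_n(a,a+1]<\infty$: Lemma~\ref{key:lem1} a priori delivers $v(x)H(x,x+1]=O(1)$ only at the base point, so one must exploit the boundedness of $v$ and the regularity implicit in the hypothesis \eqref{eq.growing.intervals} to promote this to a bound uniform across the shifts $x_n+a$ over bounded $a$. Once this is in hand, the two-level splitting and a routine Portmanteau-type argument close the proof.
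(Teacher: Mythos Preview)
Your proposal is correct and mirrors the paper's proof: test the one-step renewal identity against a compactly supported function, discard the initial-state term, and split the remaining integral at a large level, using asymptotic homogeneity on the inside and the integrable majorant $\Xi$ on the tail. The only tactical difference is that the paper takes the test function $f$ to be \emph{smooth} and integrates by parts, so the inner error becomes
\[
\Delta(n,y)=\Bigl|\int f'(x)\bigl(\P\{\xi(x_n+y)\le x-y\}-F(x-y)\bigr)\,dx\Bigr|,
\]
bounded pointwise via $|\P\{\xi(\cdot)\le x\}-F(x)|\le 2\P\{\Xi>|x|\}$; this sidesteps your equicontinuity argument for the family $\{z\mapsto\phi(u+z)\}$. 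The obstacle you flag in the final paragraph is exactly what the paper dispatches through \eqref{finite.bound.U} (obtained from Lemma~\ref{key:lem1}), and it makes no further comment on the ratio $v(x_n)/v(x_n+a)$.
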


\begin{proof}
The measure $\lambda$ is positive and $\sigma$-finite with necessity. 
Fix any smooth function $f(x)$ with a bounded support; 
let $A>0$ be such that $f(x)=0$ for $x\notin[-A,A]$. 
The weak convergence of measures means convergence of integrals
\begin{eqnarray}\label{conv.f.1}
\int_{-\infty}^\infty f(x)v(x_n)H(x_n+dx)
=\int_{-A}^A f(x)v(x_n)H(x_n+dx) \to \int_{-A}^A f(x)\lambda(dx)
\end{eqnarray}
as $n\to\infty$. On the other hand, due to the equality
$H(\cdot)=\P\{X_0\in\cdot\}+H * P(\cdot)$ we have the following
representation for the left side of \eqref{conv.f.1}:
\begin{equation}\label{conv.f.2}
\int_{-A}^A f(x)v(x_n)\P\{X_0\in x_n+dx\}
+\int_{-A}^A f(x)
\int_{-\infty}^\infty  P(x_n+y,x_n+dx)v(x_n)H(x_n+dy).
\end{equation}
Since $f$ and $v$ are  bounded,
\begin{equation}\label{conv.f.3}
\int_{-A}^A f(x)v(x_n)\P\{X_0\in x_n+dx\}
\le \|f\|_\infty \|v\|_\infty \P\{X_0\in[x_n-A,x_n+A]\} \to 0
\end{equation}
as $n\to\infty$. The second term in \eqref{conv.f.2} is equal to
\begin{eqnarray}\label{conv.f.4}
\int_{-\infty}^\infty 
v(x_n)H(x_n+dy)
\int_{-A}^A f(x)P(x_n+y,x_n+dx).
\end{eqnarray}
The weak convergence $P(t,t+\cdot)\Rightarrow F(\cdot)$ as $t\to\infty$ 
implies convergence of the inner integral in \eqref{conv.f.4}:
\begin{eqnarray*}
\int_{-A}^A f(x)P(x_n+y,x_n+dx)
&\to& \int_{-A}^A f(x)F(dx-y);
\end{eqnarray*}
here the rate of convergence can be estimated in the following way:
\begin{eqnarray*}
\Delta(n,y) &:=& \Biggl|\int_{-A}^A
f(x) (P(x_n+y,x_n+dx)-F(dx-y))\Biggr|\\
&=& \Biggl|\int_{-A}^A
f'(x)(\P\{\xi(x_n+y)\le x-y\}-F(x-y))dx\Biggr|\\
&\le& \|f'\|_\infty \int_{-A-y}^{A-y}
|\P\{\xi(x_n+y)\le x\}-F(x)|dx.
\end{eqnarray*}
Thus, the asymptotic homogeneity of the chain
yields for every fixed $C>0$ the uniform convergence
\begin{eqnarray}\label{Delta.1}
\sup_{y\in[-C,C]}\Delta(n,y) &\to& 0\quad\mbox{as }n\to\infty.
\end{eqnarray}
In addition, by the majorisation condition \eqref{majoriz}, for all $x\in\R$,
\begin{eqnarray*}
|\P\{\xi(x_n+y)\le x\}-F(x)| &\le& 2\P\{\Xi>|x|\}.
\end{eqnarray*}
Hence, for all $y$,
\begin{eqnarray}\label{Delta.2}
\Delta(n,y) &\le& 2\|f'\|_\infty \int_{-A-y}^{A-y}
\P\{\Xi>|x|\}dx\nonumber\\
&\le& 4A\|f'\|_\infty \P\{\Xi>|y|-A\}.
\end{eqnarray}
We have an estimate
\begin{eqnarray*}
\Delta_n &:=&
\Biggl|\int_{-\infty}^\infty v(x_n) H(x_n+dy)
\Biggl(\int_{-A}^A f(x)P(x_n{+}y,x_n{+}dx)
-\int_{-A}^A f(x)F(dx{-}y)\Biggr)\Biggr|\\
&\le& \int_{-\infty}^\infty \Delta(n,y) v(x_n) H(x_n+dy).
\end{eqnarray*}
For any fixed $C>0$, \eqref{Delta.1} and \eqref{finite.bound.U} imply that
\begin{eqnarray*}
\int_{-C}^C \Delta(n,y) v(x_n)H(x_n+dy)
&\le& \sup_{y\in[-C,C]} \Delta(n,y)
\cdot\sup_n \bigl( v(x_n)H[x_n-C,x_n+C]\bigr)\\
&\to& 0 \quad\mbox{as }n\to\infty.
\end{eqnarray*}
The remaining part of the integral can be estimated by \eqref{Delta.2}:
\begin{eqnarray*}
\lefteqn{\limsup_{n\to\infty}\int_{|y|\ge C} \Delta(n,y) v(x_n)H(x_n+dy)}\\
&\le& 4A\|f'\|_\infty \limsup_{n\to\infty}
\int_{|y|\ge C} \P\{\Xi>|y|-A\} v(x_n)H(x_n+dy).
\end{eqnarray*}
Since $\Xi$ has finite mean, property
\eqref{finite.bound.U} of the renewal measure $H$
allows us to choose a sufficiently large $C$
in order to make the `$\limsup$' as small as we please.
Therefore, $\Delta_n \to 0$ as $n\to\infty$.
Hence, \eqref{conv.f.4} has the same limit
as the sequence of integrals
\begin{eqnarray*}
\int_{-\infty}^\infty v(x_n)H(x_n+dy)
\int_{-A}^A f(x)F(dx-y).
\end{eqnarray*}
Now the weak convergence to $\lambda$
implies that \eqref{conv.f.4} has the limit
\begin{eqnarray}\label{conv.f.5}
\int_{-\infty}^\infty \lambda(dy)
\int_{-\infty}^\infty f(x)F(dx-y)
&=& \int_{-\infty}^\infty f(x)
\int_{-\infty}^\infty F(dx-y) \lambda(dy)\nonumber\\
&=& \int_{-\infty}^\infty f(x) (F*\lambda)(dx).
\end{eqnarray}
By \eqref{conv.f.1}--\eqref{conv.f.3}
and \eqref{conv.f.5}, we conclude the identity
\begin{eqnarray*}
\int_{-\infty}^\infty f(x)\lambda(dx)
&=& \int_{-\infty}^\infty f(x) (F*\lambda)(dx).
\end{eqnarray*}
Since this identity holds for every smooth function $f$ 
with a bounded support, the measures $\lambda$ and $F*\lambda$ coincide.
The proof is complete.
\end{proof} 

Further we use the following statement which is due to 
Choquet and Deny~\cite{CD}.

\begin{proposition}\label{l.2}
Let $F$ be a distribution not concentrated at $0$. Let $\lambda$ be a non-negative measure
satisfying the equality $\lambda=\lambda*F$ and the property
$\sup\limits_{n\in\Z}\lambda[n,n+1]<\infty$.

If $F$ is non-lattice, then $\lambda$ is proportional to the Lebesgue measure.

If $F$ is lattice with minimal span $1$ and $\lambda(\R\setminus\Z)=0$, 
then $\lambda$ is proportional to the counting measure.
\end{proposition}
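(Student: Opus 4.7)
The plan is to reduce the measure identity $\lambda = \lambda * F$ to a statement about bounded continuous harmonic functions, and then invoke the classical Choquet--Deny theorem for functions (which yields the proposition as a corollary).

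First, for a non-negative compactly supported continuous test function $\phi$ on $\R$, define
\[
u_\phi(t)\ :=\ \int_\R \phi(x+t)\, \lambda(dx).
\]
If $\operatorname{supp}\phi\subseteq[-A,A]$, then the assumption $\sup_{n\in\Z}\lambda[n,n+1]<\infty$ gives $u_\phi(t)\le \|\phi\|_\infty\,\lambda[-A-t,A-t]$, so $u_\phi$ is bounded uniformly in $t$; uniform continuity of $\phi$ combined with dominated convergence makes $u_\phi$ continuous on $\R$. Substituting $\lambda = \lambda*F$ and applying Fubini (legitimate by the uniform bound just obtained) gives
\[
u_\phi(t)\ =\ \iint \phi(x+y+t)\, \lambda(dx)\,F(dy)\ =\ \int u_\phi(t+y)\, F(dy),
\]
so $u_\phi$ is a bounded continuous solution of the $F$-harmonic equation $u(t)=\int u(t+y)\,F(dy)$.

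Second, I would invoke the classical Choquet--Deny theorem for bounded continuous functions, which asserts that every such $u$ is invariant under translation by any element of the closed subgroup $H\subseteq\R$ generated by $\operatorname{supp}(F)$. In the non-lattice case $H=\R$, hence $u_\phi\equiv u_\phi(0)$ on $\R$. In the lattice case with minimal span $1$ and $\lambda(\R\setminus\Z)=0$, I would run the parallel argument with $\phi=\I_{\{k\}}$ viewed as a function on $\Z$ and conclude that the corresponding $u_\phi$ is constant on $\Z$.

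Third, the constancy of $u_\phi(t)$ in $t$ for every admissible $\phi$ translates back to $\lambda$: the identity
\[
\int \phi(x+t)\, \lambda(dx)\ =\ \int \phi(x)\, \lambda(dx)\qquad\text{for all }t\in\R
\]
forces $\lambda$ to be translation invariant on $\R$. The only locally finite translation invariant Borel measure on $\R$ is a multiple of Lebesgue measure, giving (i). In the lattice case the same reduction with $\phi=\I_{\{k\}}$ yields $\lambda\{k\}=\lambda\{0\}$ for all $k\in\Z$, so $\lambda$ is proportional to counting measure, giving (ii).

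The main obstacle is the invocation of the Choquet--Deny theorem itself; the paper cites~\cite{CD}, but a self-contained derivation can be supplied via martingales. Let $S_n=\xi_1+\cdots+\xi_n$ with $\xi_i$ i.i.d.\ of law $F$, built on the canonical product space. Then $u_\phi(t+S_n)$ is a bounded martingale and converges almost surely. The Hewitt--Savage zero--one law forces the limit to be a.s.\ constant, and a standard coupling between walks started at $t$ and $t'$ whose increments are eventually identified shows that this constant depends only on the coset $t+H$. Continuity of $u_\phi$ then upgrades invariance on the dense subgroup $H$ (non-lattice case) to invariance on all of $\R$, which is what is required.
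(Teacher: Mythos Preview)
The paper does not prove this proposition at all: it simply states it and attributes it to Choquet and Deny~\cite{CD}. Your proposal therefore goes well beyond what the paper does, supplying a correct outline of the standard reduction (from the measure equation $\lambda=\lambda*F$ to the functional equation $u=\int u(\cdot+y)\,F(dy)$ for $u_\phi(t)=\int\phi(x+t)\,\lambda(dx)$) together with a sketch of the martingale proof of the Choquet--Deny theorem for bounded continuous harmonic functions. All steps are sound; the only place your write-up is a little loose is the ``coupling'' remark in the final paragraph, where the cleaner route is to note that $u_\phi(t)=\E u_\phi(t+S_n)\to c(t)$ by bounded convergence, so $u_\phi=c$, and then the identity $c(t+a)=c(t)$ for $F$-a.e.\ $a$ plus continuity gives invariance under the closed group generated by $\operatorname{supp}F$.
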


The concluding part of the proof of Theorem \ref{thm:ah.renewal} 
will be carried out for the non-lattice case. 
Choose any sequence of points $x_n\to\infty$ such that the measure 
$v(x_n)H(x_n+\cdot)$ converges weakly to some measure $\lambda$ as $n\to\infty$. 
It follows from Lemma \ref{l.1} and Proposition \ref{l.2} that then 
$\lambda(dx)=\alpha\cdot dx$ with some $\alpha$, i.e.,
\begin{eqnarray*}
v(x_n)H(x_n+dx) &\Rightarrow& \alpha\cdot dx\ \mbox{ as }n\to\infty.
\end{eqnarray*}
Then, for any $A>0$ and $k\in\{0,1,2,\ldots\} $, 
$$
v(x_n)H(x_n+kA, x_n+(k+1)A] \to \alpha A.
$$
Then, there exists a sufficiently slowly growing sequence 
$t_n\uparrow\infty$ such that 
$$
\frac{v(x_n)H(x_n, x_n+t_n]}{t_n} \to \alpha.
$$
It follows from the assumption \eqref{eq.growing.intervals} that $\alpha=C_H$.  

We complete the proof by routine contradiction argument. 
Suppose there exists a sequence $\{x_n\}$ such that 
\begin{equation}\label{eq_contr}
v(x_n) H(x_n,x_n+h]\ \not\to\ C_Hh\quad\mbox{as }n\to\infty. 
\end{equation}
However, by  Helly's Selection Theorem and  arguments above there exists a further subsequence ${x_{n_k}}$ for 
which 
$$
v(x_{n_k})H(x_{n_k}, x_{n_k}+h] \to C_Hh,
$$
which contradicts \eqref{eq_contr}.
\end{proof}

\section{Random walks conditioned to stay positive}\label{sec:rwcsp}

In this section we prove Example~\ref{thm:rwcsp} by showing that
under the conditions stated the random walk conditioned to stay positive
satisfies all the conditions of Corollary~\ref{cor:regular}.
We start with checking that there is a function $s(x)\to\infty$ 
of order $o(x)$ such that
\[
m_1^{[s(x)]}\sim\frac{\sigma^2}{x}\quad\text{and}\quad 
m_2^{[s(x)]}\to\sigma^2\quad\mbox{as }x\to\infty,
\]
and~\eqref{regular_left_tail} holds 
for some decreasing integrable at infinity function $p(x)$.

Indeed, it is immediate from~\eqref{def.cond} that, for all $x$
such that $x-s(x)>0$,
\begin{align*}
m_1^{[s(x)]}(x)&
:=\frac{1}{V(x)}\E\{V(x+\xi_1)\xi_1;\ |\xi_1|\le s(x)\}\\
&=\frac{1}{V(x)}\E\{(V(x+\xi_1)-V(x))\xi_1;\ |\xi_1|\le s(x)\}
+\E\{\xi_1;\ |\xi_1|>s(x)\}\\
&=\frac{1}{V(x)}\E\{(V(x+\xi_1)-V(x))\xi_1;\ |\xi_1|\le s(x)\}+o(1/x),
\end{align*}
by $\E\xi_1=0$ and the finiteness of $\E\xi_1^2$,
provided $s(x)/x$ tends to zero sufficiently slow.
Finiteness of the second moment also implies that
ladder heights have finite expectation, so by the local renewal theorem,
\begin{eqnarray}\label{srt}
V(x+y)-V(x) &\to& \frac{y}{\E\chi^-}\quad\mbox{as }x\to\infty,
\end{eqnarray}
in non-lattice case;
in lattice case both $x$ and $y$ are restricted to the lattice.
Hence $(V(x+\xi_1)-V(x))\xi_1$ converges a.s.\ to
$\xi_1^2/\E\chi^-$ as $x\to\infty$.
By \eqref{srt}, $\sup_x(V(x+1)-V(x))=:c<\infty$ which yields
\begin{eqnarray}\label{incr.V}
|V(x+y)-V(x)| &\le& c_V(|y|+1).
\end{eqnarray}
This allows us to apply the dominated convergence theorem and to infer that
\begin{eqnarray*}
\E\{(V(x+\xi_1)-V(x))\xi_1;\ |\xi_1|\le s(x)\} &\to&
\frac{\E\xi_1^2}{\E\chi^-}\ =\ \frac{\sigma^2}{\E\chi^-}
\quad\mbox{as }x\to\infty.
\end{eqnarray*}
By the renewal theorem, $V(x)\sim x/\E\chi^-$ and hence
\begin{eqnarray}\label{m1}
m_1^{[s(x)]}(x) &\sim& \frac{\sigma^2}{x}\quad\mbox{as }x\to\infty.
\end{eqnarray}
For the truncated second moment of jumps we have
\begin{align*}
m_2^{[s(x)]}(x)&
:=\frac{1}{V(x)}\E\{V(x+\xi_1)\xi_1^2;\ |\xi_1|\le s(x)\}\\
&=\frac{1}{V(x)}\E\{(V(x+\xi_1)-V(x))\xi_1^2;\ |\xi_1|\le s(x)\}
+\E\{\xi_1^2;\ |\xi_1|\le s(x)\}\\
&=\frac{1}{V(x)}\E\{(V(x+\xi_1)-V(x))\xi_1^2;\ |\xi_1|\le s(x)\}+\sigma^2+o(1).
\end{align*}
Since for $|\xi_1|\le s(x),$
\begin{eqnarray*}
|V(x+\xi_1)-V(x)|\xi_1^2 &\le& c_V(1+|\xi_1|)\xi_1^2\ \le\ c_V(1+s(x))\xi_1^2
\end{eqnarray*}
so that 
\begin{eqnarray*}
\frac{|V(x+\xi_1)-V(x)|}{V(x)}\xi_1^2 &\stackrel{a.s.}\to& 0
\quad\mbox{as }x\to\infty,
\end{eqnarray*}
we get, again by the dominated convergence theorem,
\begin{eqnarray*}
\frac{1}{V(x)}\E\{(V(x+\xi_1)-V(x))\xi_1^2;\ |\xi_1|\le s(x)\}
&\to& 0\quad\text{as }x\to\infty.
\end{eqnarray*}
Therefore,
\begin{eqnarray*}
m_2^{[s(x)]}(x)\ \to\ \sigma^2\quad\mbox{as }x\to\infty.
\end{eqnarray*}
Summarizing,~\eqref{m1.m2.1x} holds with $\mu=\sigma^2$ and $b=\sigma^2$.
According to the construction of $X_n$,~\eqref{regular_left_tail} is equivalent to the following upper bound
\[
\frac{1}{V(x)}\E\{V(x+\xi_1);\ |\xi_1|>s(x)\}\ \le\ \frac{p(x)}{x}.
\]
Recalling that $V(x)$ is increasing and asymptotically linear, 
it suffices to show that
\[
\P\{\xi_1<-s(x)\}+\frac{1}{x}\E\{\xi_1;\ \xi_1>s(x)\}
\ \le\ \frac{p(x)}{x}
\]
for some $s(x)=o(x)$, but this is immediate from the assumption
$\E\xi_1^2<\infty$.

We also need to check the conditions 
\eqref{majorant_third}--\eqref{majorant_third_moment_exists}
and \eqref{majoriz}. To check the first one, we note that, 
\[
c_1\ :=\ \sup_x\frac{V(x+s(x))}{V(x)}\ <\ \infty,
\] 
hence, for $t\le s(x)=o(x)$,
\begin{eqnarray*}
\P\{|\xi(x)|>t, |\xi(x)|\le s(x)\}
&=& \biggl(\int_{-s(x)}^{-t}+\int_t^{s(x)}\biggr)\frac{V(x+u)}{V(x)}\P\{\xi_1\in du\}\\
&\le& c_1\P\{|\xi_1|>t\},
\end{eqnarray*}
and \eqref{majorant_third}--\eqref{majorant_third_moment_exists}
follows if we take $\widehat{\xi}$ defined by its tail as
\[
\P\{\widehat{\xi}>t\}=\min\{1,c_1\P\{|\xi_1|>t\}\},
\]
which is square integrable because $\xi_1$ is so.

Next, using once again \eqref{incr.V} we obtain
\begin{eqnarray*}
\P\{|\xi(x)|>t\}
&=& \biggl(\int_{-x}^{-t}+\int_t^\infty\biggr)
\frac{V(x+u)}{V(x)}\P\{\xi_1\in du\}\\
&\le& \P\{\xi_1<-t\}+\int_t^\infty
\Bigl(1+c_V\frac{u+1}{V(x)}\Bigr)\P\{\xi_1\in du\}\\
&\le& \P\{\xi_1<-t\}+\Bigl(1+\frac{c_V}{V(x)}\Bigr)\P\{\xi_1>t\}
+\frac{c_V}{V(x)}\E\{|\xi_1|;|\xi_1|>t\})\\
&\le& c_2(\P\{|\xi_1|>t\}+\E\{|\xi_1|;|\xi_1|>t\})\quad\mbox{for all }x,\ t>0.
\end{eqnarray*}
The right hand side is integrable due to $\E\xi_1^2<\infty$, 
so the condition \eqref{majoriz} is satisfied too.

Finally, the asymptotic homogeneity \eqref{asymp.hom} 
is immediate from \eqref{def.cond},  with $\xi=\xi_1$, 
because, for any fixed $u\in\R$, 
$V(x+u)/V(x)\to 1$ as $x\to\infty$, and the proof is complete.

\end{document}